\newtheorem{theorem}{Theorem}[section]
\newtheorem{proposition}[theorem]{Proposition}
\newtheorem{lemma}[theorem]{Lemma}
\newtheorem{corollary}[theorem]{Corollary}
\newtheorem{remark}[theorem]{Remark}
\newtheorem{definition}[theorem]{Definition}
\newtheorem{assumption}[theorem]{Assumption}
\newcommand{\N}{\mathbb{N}}
\newcommand{\Z}{\mathbb{Z}}
\newcommand{\Q}{\mathbb{Q}}
\newcommand{\R}{\mathbb{R}}
\newcommand{\B}{\mathcal{B}}
\newcommand{\F}{\mathcal{F}}
\newcommand{\G}{\mathcal{G}}
\newcommand{\HH}{\mathcal{H}}
\newcommand{\X}{\mathcal{X}}
\newcommand{\Y}{\mathcal{Y}}
\renewcommand{\P}{\mathbb{P}}
\newcommand{\E}{\mathbb{E}}
\newcommand{\law}[1]{\text{Law}(#1)}
\newcommand{\Pas}{\text{a.s.}}
\newcommand{\ind}{\mathds{1}}
\newcommand{\eps}{\varepsilon}
\newcommand{\ga}{\gamma}
\newcommand{\ka}{\kappa}
\newcommand{\cov}{\mathrm{Cov}}
\newcommand{\var}{\mathrm{Var}}
\newcommand{\lin}{\mathrm{Lin}}
\newcommand{\dint}{\mathrm{d}}  % integrálás d-betű
\newcommand{\leb}{\mathrm{Leb}}
\newcommand{\lfrf}[1]{\left\lfloor #1\right\rfloor}
\newcommand{\ul}[1]{\underline{#1}}
\title{Transition of $\alpha$-mixing in Random Iterations with Applications in Queuing Theory\thanks{This work was supported by the János Bolyai Research Scholarship of the Hungarian Academy of Sciences. Additional support was provided by the National Research, Development and Innovation Office (NKFIH) under the Thematic Excellence Program 2021, National Research Subprogram “Artificial intelligence, large networks, data security: mathematical foundations and applications”, and by Grant K 143529.}}
\author{Attila Lovas}
\affil{HUN-REN Alfr\'ed R\'enyi Institute of Mathematics, Budapest, Hungary}
\affil{Budapest University of Technology and Economics, Budapest, Hungary}
\date{\today}
\begin{document}

\maketitle

\begin{abstract}	
%introduction --> what is an MCRE, why they are useful. Stationary environment + under various forms of the standard lyapunov and small set-type conditions

Nonlinear time series models with exogenous regressors are essential in econometrics, queuing theory, and machine learning, though their statistical analysis remains incomplete. Key results, such as the law of large numbers and the functional central limit theorem, are known for weakly dependent variables. We demonstrate the transfer of mixing properties from the exogenous regressor to the response via coupling arguments. Additionally, we study Markov chains in random environments with drift and minorization conditions, even under non-stationary environments with favorable mixing properties, and apply this framework to single-server queuing models.
\end{abstract}

\section*{Introduction}

It is very common in natural and social sciences that for describing the time evolution of certain quantity of interests, researchers build models incorporating input variables not influenced by other variables in the system and on which the output variable depends. Such explicative variables, especially in econometrics literature, are called exogeneous covariates. 
Let $\X$, $\Y$, and $\mathcal{Z}$ be complete and separable metric spaces. The $\X$-valued process $(X_t)_{t\in\N}$ represents the time series of interest and the $\Y$-valued process $(Y_t)_{t\in\Z}$ denotes the exogeneous covariate. We postulate that $(X_t)_{t\in\N}$ satisfies the recursion
\begin{align}\label{eq:iter}
\begin{split}
X_{t+1} &= f(X_{t},Y_{t},\eps_{t+1}),
\end{split}
\end{align}
where $X_0$ is a possibly random initial state, $f:\X\times\Y\times \mathcal{Z}\to\X$ is a measurable function, and $\eps_t\in \mathcal{Z}$, $t\in\N$ represents the noise entering to the system. 

The exploration and analysis of non-linear autoregressive processes of this kind constitute a recent and actively developing area of research. In particular, there is a pronounced surge of interest within the fields of applied statistics and econometrics regarding the investigation of standard time series models that incorporate exogeneous regressors. Notable examples include a novel class of Poisson autoregressive models with exogeneous covariates (PARX) introduced by Agosto et al. \cite{PARX_AGOSTO2016640} for modeling corporate defaults. Additionally, the recent research by Gorgi and Koopman \cite{Gorgi2020} has provided valuable insights on observation-driven models involving beta autoregressive processes with exogeneous factors. 
Furthermore, the theory of non-linear autoregressive processes allows researchers for analyzing
large-scale stochastic optimization algorithms, which play a pivotal role in machine learning applications, see \cite{largescaleML2018,lovas}.

\smallskip
The statistical analysis of general non-linear time series models with exogenous covariates  necessitates the law of large numbers (LLN), central limit theorem (CLT), and others. However, this framework is presently unavailable. Researchers have investigated these models under additional assumptions that facilitate their analysis. The ergodicity of iterations given by \eqref{eq:iter} has been extensively studied under the restrictive assumption that the data $(Y_t)_{t\in\Z}$ and the noise $(\eps_t)_{t\in\N}$ are both i.i.d. and also independent of each other (See, \cite{DiFr99}, \cite{Ios09}, and \cite{stenflo}). In this case, the process $(X_t)_{t\in\N}$ is a Markov chain, and this setting now can be considered to be textbook material. Moving beyond this simplifying yet unrealistic assumption, Debaly and Truquet established general results for getting stationarity, ergodicity and stochastic dependence properties for general nonlinear dynamics defined in terms of iterations of random maps \cite{debaly_truquet_2021}. Additionally, there are earlier contributions that consider more general schemes and investigate them without assuming independence. For instance, in the paper of Borovkov and Foss \cite{borovkov1993stochastically}, Foss and Konstantopoulos \cite{foss2003extended} and also in the monograph of Borovkov \cite{borovkov} such processes are treated under the name ''stochastically recursive sequences''. Among the most recent results, we can mention the paper \cite{gyrfi2023strong} by Gy\"orfi et al. that introduces a novel concept called strong stability and provides sufficient conditions for strong stability of iterations given by \eqref{eq:iter}. Furthermore, new findings related to Langevin-type iterations with dependent noise and multitype branching processes were also established.

\smallskip
Assuming that the noise $(\eps_t)_{t\in\N}$ is i.i.d. and independent of the regressor $\mathbf{Y}:=(Y_t)_{t\in\Z}$, we have
\begin{equation}\label{eq:iter_con_ex}
\P (X_{t}\in B\mid (X_j)_{j<t},\,\mathbf{Y}) = \int_{\mathcal{Z}} \ind_{\left\{f(X_{t-1},Y_{t-1},z)\in B\right\}}\,\nu (\dint z),\,\,t\ge 1,
\end{equation}
where $\nu = \law{\eps_0}$. Clearly, the process $(X_t)_{t\in\N}$ defines a time-inhomogeneous Markov chain conditionally on the exogeneous process $(Y_t)_{t\in\N}$ being interpreted as random environment. This characterization leads us to term this process a Markov chain in a random environment (MCRE). This concept is proved to be a good compromise since, many interesting models can be treated as a MCRE. Furthermore, it is worth noting that the rich theory of general state Markov chains equips us with powerful analytical tools to study and understand these processes in-depth. 
Markov chains in random environments were first studied on countable state spaces in
\cite{cogburn1984ergodic,cogburn1990direct,orey1991markov}. On general state spaces \cite{kifer1,kifer1998limit,seppalainen1994large} investigated their ergodic properties under a rather stringent hypothesis: essentially, the Doeblin condition was assumed (see Chapter 16 of \cite{mt}). Such assumptions are acceptable on compact state spaces but they fail in most models evolving in $\R^{d}$. For non-compact state spaces the results of \cite{stenflo} apply (see also Chapter 3 of \cite{borovkov}) but the system dynamics is assumed to be strictly contracting, which, again, is too stringent for most applications. Markov chains in stationary random environments were first treated on non-compact state spaces under Lyapunov and ``small set''-type conditions in \cite{rasonyi2018} and \cite{lovas}. The former paper was based on the control of the maximal process of the random environment but its techniques worked only assuming that the system dynamics is contractive with respect to a certain Lyapunov function, whatever the random environment is. In \cite{lovas} this decreasing property is required only in an \emph{averaged} sense. This result covers important model classes that none of the previous works could: queuing systems with non-independent service times (or inter-arrival times), linear systems that are stable in the average, and stochastic gradient Langevin dynamics when the data is merely stationary. 
In \cite{Truquet1}, under a notably weaker, yet in certain aspects, optimal form of the Lyapunov and the small set conditions, Truquet showed that for a given strongly stationary process $(Y_t)_{t\in\N}$, there exists a process $(X_t)_{t\in\N}$ satisfying the iteration in \eqref{eq:iter}, and the distribution of the process $(X_t,Y_t)_{t\in\N}$ is unique. Additionally, if the process $(Y_t)_{t\in\N}$ is ergodic, then the process $(X_t,Y_t)_{t\in\N}$ is ergodic as well, hence the strong law of large numbers applies.

As far as we know, there are no known results regarding MCREs when the environment $(Y_t)_{t\in\N}$ is non-stationary. Furthermore, the sequence of iterates $(X_t)_{t\in\N}$ is typically non-stationary even in cases when the environment is stationary but the initial state $x_0\in\X$ is independent of $\sigma (\{\eps_t, Y_t\mid t\in\N\})$. Weak dependence assumptions offer a valuable approach to address this problem while allowing for long-range dependencies to be present. The recent work by Truquet \cite{TRUQUET2023294} directed our attention to the fact that through arguments based on coupling inequalities, it can be established under general conditions that the mixing properties of the process $(Y_t)_{t\in\N}$ are inherited by the iterates $(X_t)_{t\in\N}$. 
Combining this idea with Corollary 2 from Herrndorf's paper \cite{herrndorf1984}, we were able to establish the functional central limit theorem for the stochastic gradient Langevin iteration in cases where the data stream is stationary and exhibits favorable mixing properties \cite{lovasCLT}.

\smallskip
Rosenblatt introduced the alpha-mixing coefficient in 1965, defined the class of strongly mixing processes and proved the central limit theorem for strongly mixing stationary processes \cite{rosenblatt1956central}. Over the past decades, researchers have established numerous strong results for non-stationary mixing processes, including various versions of the law of large numbers and the central limit theorem. The main goal of this paper is to investigate the sequence of iterates $(X_t)_{t\in\N}$ through the transitions of mixing properties, leveraging these established results.

The paper is organized as follows: In the first section, we provide sufficient conditions for a recursion of the form \eqref{eq:iter} to inherit the mixing properties of the process $(Y_t)_{t\in\N}$. Leveraging these conditions along with existing results from the literature on strongly mixing sequences, we prove the strong and $L^1$ law of large numbers for suitable functionals of the process $(X_t)_{t\in\N}$. Furthermore, we also show the possibility of constructing confidence intervals.

The second section focuses on the investigation of MCREs under long-term contractivity and minorization conditions satisfied by models discussed in \cite{lovas}. By establishing a coupling inequality and a moment estimate for such chains, the framework presented in Section 1 becomes directly applicable to these processes. Additionally, in this section, using the Cramér-Rao bound, we prove an inequality for variances of sums crucial for the functional cental limit theorem by Merlev{\`e}de and Peligrad \cite{merlevede2020functional}. To the best of our knowledge, this technique represents a novel contribution to the theory of MCREs.
In the third and final section of the paper, we revisit single-server queuing models discussed in \cite{lovas} and \cite{lovas2021ergodic}, and prove the functional central limit theorem for them.

\bigskip
\noindent
{\bf Notations and conventions.} Let $\R_{+}:=\{x\in\R:\, x\geq 0\}$
and $\N_{+}:=\{n\in\N:\ n\geq 1\}$. Let $(\Omega,\F,\P)$ be a probability space. We denote by $\E[X]$ the expectation of a random variable $X$. For $1\le p<\infty$, $L^p$ is used to denote the usual space of $p$-integrable real-valued random variables and $\Vert X \Vert_p$ stands for the $L^p$-norm of a random variable $X$.

In the sequel, we employ the convention that $\inf \emptyset=\infty$, $\sum_{k}^{l}=0$ and $\prod_{k}^{l}=1$ whenever $k,l\in\Z$, $k>l$. 
Lastly, $\langle \cdot\mid \cdot\rangle$ denotes the standard Euclidean inner product
on finite dimensional vector spaces. For example, on $\R^d$, $\langle x\mid y\rangle = \sum_{i=1}^{d} x_i y_i$.

\section{Transition of mixing properties}\label{sec:mixtrans}

In this section, we study the transition of mixing properties of the covariate process to the response and its immediate consequences under minimal assumptions on the iteration \eqref{eq:iter}. To this end, we first introduce the basic concepts that we use in our analysis. Several notions of mixing exist in the literature. The interested reader should consult the excellent survey by Bradley \cite{Bradley2005}, for example. In our context \emph{$\alpha$-mixing} holds particular importance, therefore let us first recall the key concepts related to this type of mixing. We define the measure of dependence, denoted as $\alpha(\G, \HH)$, for any two sub-$\sigma$-algebras $\G,\HH \subset \F$, using the equation:
\begin{equation}\label{eq:dep}
\alpha (\G,\HH) = \sup\limits_{G\in\G, H\in\HH} \left|\P (G\cap H)-\P (G)\P (H)\right|.
\end{equation}

Furthermore, considering an arbitrary sequence of random variables $(W_t)_{t\in\Z}$, we introduce the $\sigma$-algebras $\F_{t,s}^W:=\sigma \left(W_k,\,t\le k\le s\right)$, where $-\infty\le t\le s\le\infty$.
Additionally, we define the dependence coefficients as follows: 
$$\alpha_j^W (n) = \alpha\left(\F_{-\infty,j}^W,\F_{j+n,\infty}^W\right),\,\,j\in\Z.$$ 
The mixing coefficient of $W$ is $\alpha^W (n)= \sup_{j\in\Z}\alpha_j^W (n)$, $n\ge 1$ which is obviously non-increasing. Note that, for strictly stationary $W$, $\alpha_j^W (n)$ does not depend on $j$, and thus $\alpha^W (n)=\alpha_0^W (n)$. We classify $W$ as strongly mixing if $\lim_{n\to\infty}\alpha^W(n)=0$.
Strongly mixing processes are significant because many fundamental theorems known for sequences of i.i.d. random variables can be appropriately extended to such processes. The primary importance of this lies in enabling the statistical analysis of heterogeneously distributed dependent data even when the exact underlying dynamics generating the time series are unknown. The available theoretical framework includes the strong law of large numbers by McLeish \cite{McLeish}, the $L^1$-law of large numbers by Hansen \cite{hansen2019weak}, and the Central Limit Theorem by White \cite{white2000}, among others. For better clarity, these results are collected in Appendix \ref{sec:mixing_survey}.

\medskip
Now, we turn to the analysis of the iteration \eqref{eq:iter}.
For better readability, we introduced the complete separable metric space $\mathcal{Z}$ in the introduction. However, we can always assume that $\mathcal{Z} = [0,1]$ without loss of generality.
Indeed, since $\mathcal{Z}$ is a complete separable metric space, the Borel isomorphism theorem guarantees the existence of a bi-measurable bijection $\phi: \mathcal{Z} \to A$, where $A \subseteq [0,1]$. Given an iteration of the form \eqref{eq:iter}, we can always define an equivalent iteration:
$$
X_{t+1} = \tilde{f}(X_t, Y_t, \tilde{\eps}_{t+1}),
$$
where $\tilde{\eps}_t = \phi(\eps_t) \in [0,1]$, $t \in \N$, and
$$
\tilde{f} (x,y,u) = f(x,y,\phi^{-1}(u)), \quad x \in \X,\, y \in \Y,\, u \in A.
$$
Since this transformation does not alter the dynamics of the iteration, we conclude that it suffices to assume $\mathcal{Z} = [0,1]$. Therefore, from this point onward, we adopt this assumption.

Let us introduce the notation $\tilde{Y}_n = (Y_n,\eps_{n+1})$, $n\in\N$. Furthermore, for a fixed $x\in\X$, we define the process
\begin{equation}\label{eq:Z}
Z_{s,t}^{x} = \begin{cases}
x, & \text{if}\,\, t\le s \\
f\left(Z_{s,t-1}^{x},Y_{t-1},\eps_t\right), & \text{if}\,\, t>s.
\end{cases}
\end{equation}
Note that, for $s\in\N$ and $t\ge s$, $Z_{s,t}^{X_s}=X_t$.

\begin{definition}\label{def:coupling}	
We say that the iteration $\eqref{eq:iter}$ satisfies the \emph{coupling condition} if for some $x_0\in\X$,
$$
\sup_{j\in\N}\P \left(Z_{j,j+n}^{X_j}\ne Z_{j,j+n}^{x_0}\right)
\to 0,
\,\,
n\to\infty.
$$
\end{definition}

% Transition of mixing
Bounding the mixing coefficient of a process is typically a non-trivial task in general. The following lemma presents an upper bound for the $\alpha$-mixing coefficient of the iterates $(X_n)_{n\in\N}$ given $\alpha^{\tilde{Y}}$. 

\begin{lemma}\label{lem:Mixing}
	Assume that $X_0$ is a random initial state independent of $\sigma (Y_n,\eps_{n+1}\mid n\in\N)$, moreover the iteration \eqref{eq:iter} satisfies the coupling condition with $x_0\in\X$.
	
	Then for $0\le m<n$, we have
	$$
	\alpha^X (n) \le \alpha^{\tilde{Y}} (m+1)+b(n-m),
	$$
	where
	$
	b(n)
	=
	\sup_{j\in\N}\P \left(Z_{j,j+n}^{X_j}\ne Z_{j,j+n}^{x_0}\right).
	$
\end{lemma}
\begin{proof}

Let $j \in \N$ and $n \ge 1$ be arbitrary. Suppose $A \in \F_{0,j}^{X}$ and $B \in \F_{j+n, \infty}^{X}$ are arbitrary events. Then by the definition of the generated $\sigma$-algebra, exist collections of Borel sets
$
(A_k)_{k\le j},\,(B_k)_{k\ge j+n}\subseteq\B (\X)
$
such that		
$$
A=(X_k\in A_k,\,0\le k\le j)
\,\,\text{and}\,\,
B=(X_k\in B_k,\,k\ge j+n).
$$

Let \( 0 \le m < n \) be arbitrary, and introduce the event
$
\tilde{B} = \left( Z_{j+m,k}^{x_0} \in B_k, \, k \ge j+n \right).
$
We can estimate
\begin{equation}\label{eq:covest}
	|P(A \cap B) - P(A)P(B)| = |\text{cov}(\ind_A, \ind_B)| 
	\le 
	|\text{cov}(\ind_A, \ind_{\tilde{B}})| + 
	|\text{cov}(\ind_A, \ind_B - \ind_{\tilde{B}})|.
\end{equation}

Using that $A$ is $\sigma (X_0)\vee\F_{0,j-1}^Y\vee\F_{1,j}^\eps$ and $\tilde{B}$ is $\F_{j+m,\infty}^Y\vee\F_{j+m+1,\infty}^\eps$-measurable, and that $X_0$ is independent of $\sigma (Y_n,\eps_{n+1}\mid n\in\N)$, we have
\begin{align*}
	|\text{cov}(\ind_A, \ind_{\tilde{B}})| 
	&\le 
	\E\left| 
	\E \left[\ind_{A}(\ind_{\tilde{B}}-\P(\tilde{B}))
	\middle|X_0
	\right]
	\right|
	\le 
	\alpha (\F_{0,j-1}^Y\vee\F_{1,j}^\eps,\F_{j+m,\infty}^Y\vee\F_{j+m+1,\infty}^\eps)
	\\
	&=\alpha (\F_{0,j-1}^{\tilde{Y}},\F_{j+m,\infty}^{\tilde{Y}})
	\le  \alpha^{\tilde{Y}} (m+1).
\end{align*}

Observe that on the event $\{X_{j+n}=Z_{j+m, j+n}^{x_0}\}$, we have $\ind_{B} = \ind_{\tilde{B}}$, and thus the second term in \eqref{eq:covest} can be estimated as	
\begin{align*}
|\cov(\ind_{A},\ind_{B}-\ind_{\tilde{B}})|
&=
\left|\E [(\ind_{A}-\P (A))(\ind_{B}-\ind_{\tilde{B}})]\right|
\le
\P \left(Z_{0, j+n}^{X_{0}}\ne Z_{j+m, j+n}^{x_0}\right)
\\
&=\P \left(Z_{j+m, j+n}^{X_{j+m}}\ne Z_{j+m, j+n}^{x_0}\right)
\le b(n-m).
\end{align*}	

Given that $A\in\F_{0,j}^{X}$ and $B\in\F_{j+n,\infty}^{X}$ were arbitrary, we have shown that 
$$
\alpha (\F_{0,j}^{X},\F_{j+n,\infty}^{X})
\le
\alpha^{\tilde{Y}} (m+1)
+ b(n-m).
$$
Since the upper bound is independent of \( j \), taking the supremum over \( j \), we arrive at the desired inequality.
\end{proof}

\begin{remark}
	In the above Lemma, it is enough to prescribe that an appropriate version of $X$ satisfy the coupling condition. The proof of the coupling condition for Markov chains in random environments (See Appendix \ref{ap:CouplingCondition}) hinges on this observation.
\end{remark}

\begin{remark}
The recent work by Truquet \cite{TRUQUET2023294} employs a similar inequality (equation (3) on page 3) to establish the transition of mixing in random iteration. However, that approach differs from ours in two significant aspects. Firstly, the analysis in \cite{TRUQUET2023294} is limited to discrete-valued time series models with strictly stationary exogenous covariates. Secondly, unlike in Lemma \ref{lem:Mixing}, the upper estimate of the strong mixing coefficient in \cite{TRUQUET2023294} incorporates the tail sum of non-coupling probabilities.
\end{remark}

Combining Lemma \ref{lem:Mixing} with the theorems on strongly mixing processes presented in Appendix \ref{sec:mixing_survey}, we obtain the following general umbrella theorem. This result equips us with essential tools for the statistical analysis of time series involving non-stationary exogenous covariates, including versions of the weak and strong law of large numbers and theoretical guarantees for constructing confidence intervals. However, this theorem does not cover all relevant aspects. For strongly mixing processes, there are further results concerning the distribution of extreme values \cite{Welsch1972}, concentration inequalities \cite{merlevede2011concentration}, and the law of the iterated logarithm \cite{rio1995LIL}. Leveraging Lemma \ref{lem:Mixing}, these results can be readily extended to random iterations driven by strongly mixing sequences.
\begin{theorem}\label{thm:GeneralResult}
	Assume that $X_0$ is a random initial state independent of $\sigma (Y_n,\eps_{n+1}\mid n\in\N)$, moreover the iteration \eqref{eq:iter} satisfies the coupling condition with $x_0\in\X$.
	
	Consider a measurable function $\Phi:\X \to \R$ such that $\mu_n = \E\left[\Phi (X_n)\right]$ exists and is finite for all $n \in \N$. Let $b(n)$, $n\in\N$ be as in Lemma \ref{lem:Mixing}, and define
	$$
	S_n = \sum_{k=1}^{n} \left(\Phi (X_k) - \mu_k\right), \quad n \ge 1.
	$$
	Then the following statements hold:
	\begin{enumerate}[A)]
		\item If the process $(\tilde{Y}_n)_{n \in \N}$ is strongly mixing and $\sup_{n \in \N} \E |\Phi (X_n)|^p < \infty$ for some $p > 1$, then the $L^1$ law of large numbers holds:
		$$
		\frac{S_n}{n} \stackrel{L^1}{\to} 0,
		\quad \text{as} \quad n \to \infty.
		$$
		
		\item Assume that there exist constants $c > 0$ and $r > 2$ such that
		$$
		\alpha^{\tilde{Y}}(n) + b(n) \le c\, n^{-\frac{r}{r-2}}.
		$$
		Moreover, suppose that for some $r/2 < p \le r < \infty$, we have $\sup_{n \in \N} \E |\Phi(X_n)|^p < \infty$. Then,
		$$
		\frac{S_n}{n} \stackrel{\Pas}{\to} 0,
		\quad \text{as} \quad n \to \infty.
		$$
			
		\item If for $r>2$, $\sup_{n\in\N}\E |\Phi (X_n)|^r<\infty$ and
		$$
		\sum_{k=0}^{\infty} (k+1)^2\left(\alpha^{\tilde{Y}}(k)^{\frac{r-2}{r+2}} + b(k)^{\frac{r-2}{r+2}}\right)<\infty,
		$$
		then the distributions of $n^{-1/2}S_n$ and $\mathcal{N}(0,\var(n^{-1/2}S_n))$ are weakly approaching, that is for any bounded continuous function $g:\R\to\R$,
		$$
		\E \left[g\left(n^{-1/2}S_n\right)\right] - 
		\int_{\R} g\left(\var (n^{-1/2}S_n)^{1/2}t\right)\frac{1}{\sqrt{2\pi}}e^{-\frac{t^2}{2}}\,\dint t
		\to 0\,\,\text{as}\,\,n\to\infty.
		$$ 
		
		Furthermore, there exists $\sigma>0$ such that for any $a>0$, we have
		$$
		\limsup_{n\to\infty}\P (n^{-1/2}|S_n|\ge a)\le \int_{\R}\ind_{[-a,a]^c}(\sigma t)\frac{1}{\sqrt{2\pi}}e^{-\frac{t^2}{2}}\,\dint t.
		$$
	\end{enumerate}	
\end{theorem}
\begin{proof}
	Define the process $W_n = \Phi(X_n) - \mu_n$, for $n \in \N$. By Lemma \ref{lem:Mixing}, we have
	\begin{equation} \label{eq:central}
		\alpha^W(n) \le \alpha^X(n) \le \alpha^{\tilde{Y}}(\lfloor n/2 \rfloor + 1) + b(n - \lfloor n/2 \rfloor).
	\end{equation}
	
	From inequality \eqref{eq:central}, it follows immediately that if $(\tilde{Y}_n)_{n \in \N}$ is strongly mixing, then so is $(W_n)_{n \in \N}$. Thus, part A) follows directly from Theorem \ref{thm:Hansen} and Remark \ref{rem:Hansen}.
	
	For part B), again using \eqref{eq:central} and applying Remark \ref{rem:McLeish}, we see that the conditions of Theorem \ref{thm:McLeish} are satisfied, from which the result follows immediately.
	
	Finally, under the assumptions of part C), we can verify that
	$$
	\sum_{k=0}^{\infty}(k+1)^2 \alpha^W(k)^{\frac{r-2}{r+2}} < \infty,
	$$
	and hence by Theorem \ref{thm:Ekstrom} and Corollary \ref{cor:ekstrom}, the distributions of $n^{-1/2}S_n$ and $\mathcal{N}(0,\var(n^{-1/2}S_n))$ are weakly approaching. Furthermore, Remark \ref{rem:ekstromremark} guarantees the existence of $\sigma > 0$ such that $\sigma_n := \var(n^{-1/2}S_n)^{1/2} < \sigma$ for all $n \ge 1$. Therefore, for any $a > 0$, we have
	\begin{align*}
		\P(n^{-1/2}|S_n| \ge a) &\le \P(n^{-1/2}|S_n| \ge a) - \int_{\R} \ind_{[-a,a]^c}(\sigma_n t) \frac{1}{\sqrt{2\pi}} e^{-t^2/2} \, \mathrm{d}t \\
		&\quad + \int_{\R} \ind_{[-a,a]^c}(\sigma t) \frac{1}{\sqrt{2\pi}} e^{-t^2/2} \, \dint t.
	\end{align*}
	Taking the upper limit as $n \to \infty$ yields
	$$
	\limsup_{n \to \infty} \P(n^{-1/2}|S_n| \ge a) \le \int_{\R} \ind_{[-a,a]^c}(\sigma t) \frac{1}{\sqrt{2\pi}} e^{-t^2/2} \, \dint t.
	$$
	
\end{proof}

\section{Markov chains in random environments}\label{sec:MCRE}

This section is devoted to study an important class of random iterations incorporating exogeneous covariates, called Markov chains in random environments. For convenience, we adopt the parametric kernel formalism to set Lyapunov and ``small set''-type conditions. Let us introduce $$Q(y,x,B)=\int_{[0,1]}\ind_{\left\{f(x,y,z)\in B\right\}}\,\dint z.$$ 
The function $Q:\X\times\Y\times\B (\X)\to [0,1]$ is a parametric probabilistic kernel, which means:
\begin{enumerate}[i]
	\item For each pair $(y, x) \in \Y \times \X$, the mapping $B \mapsto Q(y, x, B)$ defines a Borel probability measure on the Borel sigma-algebra $\B(\X)$.
	
	\item For any choice of set $B \in \B(\X)$, the mapping $(x, y) \mapsto Q(y, x, B)$ is a measurable function with respect to the product sigma-algebra $\B(\X) \otimes \B(\Y)$.
\end{enumerate}

% Assumptions
\begin{definition}\label{def:act}
	Let $P:\X\times\B\to [0,1]$ be a probabilistic kernel. For
	a bounded measurable function $\phi:\X\to\R$, we define
	\begin{equation*}
	[P\phi](x)=\int_\X \phi(z) P(x,\dint z),\,x\in\X.
	\end{equation*}
	This definition makes sense for any non-negative measurable $\phi$, too.
\end{definition}
Consistently with Definition \ref{def:act}, for $y \in \Y$, we write $Q(y)\phi$ to denote the action of the kernel $Q(y,\cdot,\cdot)$ on $\phi$. It is important to note that if $y_l, \ldots, y_{k-1} \in \Y$ with $0 \le l < k$, then the successive application of the kernels is interpreted in the order corresponding to the composition of conditional expectations:
\[
[Q(y_{k-1})\ldots Q(y_l)\phi] = [Q(y_l)[\ldots [Q(y_{k-1})\phi]]].
\]
This convention will be important later in the proof of Lemma \ref{lem:ita}.

We say that $Q$ satisfies the drift (or Lyapunov) condition if there exists a measurable mapping $V:\X\to [0,\infty)$, which we call Lyapunov-function, and measurable functions $\ga,K:\Y\to (0,\infty)$, such that for all $(y,x)\in\Y\times\X$,
\begin{equation}\label{eq:Lyapunov}
[Q(y)V](x):=\int_{\X}V(z)\,Q(y,x,\dint z)\le \ga (y)V(x)+K(y).
\end{equation}
We may, and from now on, we will assume that $K(.)\ge 1$ in the drift condition \eqref{eq:Lyapunov}.

The parametric kernel obeys the minorization condition with $R>0$, if there exists a probability kernel $\ka_R: \Y\times\B (\X)\to [0,1]$
and a measurable function $\beta:[0,\infty)\times\Y\to [0,1)$ such that for all
$(y,x,A)\in\Y\times\stackrel{-1}{V}([0,R])\times\B (\X)$,
\begin{equation}\label{eq:smallset}
Q(y,x,A)\ge (1-\beta (R,y))\ka_R (y,A). 
\end{equation}
The minorization condition stipulates the existence of ``small sets''. Therefore, it is also referred to as a "small set"-type condition.

\medskip
If $\ga,K$ are independent of $y$ and $\ga<1$ then \eqref{eq:Lyapunov} is the standard drift condition for geometrically ergodic Markov chains, see Chapter 15 of \cite{mt}. Ergodic properties of Markov chains in stationary random environments was studied by Lovas and R\'asonyi in \cite{lovas} when $\ga(y)\geq 1$ may well occur but the environment satisfies the following long-term contractivity condition:
\begin{equation}\label{eq:LT}
\limsup_{n\to\infty}\E^{1/n}\left(K(Y_0)\prod_{k=1}^{n}\ga (Y_k)\right) < 1.
\end{equation}
Under the assumption that $\E \left[\log (\ga (Y_0))_+\right]+\E \left[\log (K (Y_0))_+\right]<\infty$ and
\begin{equation}\label{eq:Truq}
	\limsup_{n\to\infty}\prod_{k=1}^{n}\ga (Y_{-k})^{1/n}<1,\,\,\P-\Pas,
\end{equation}
which is notably weaker than \eqref{eq:LT}, in \cite{Truquet1} Truquet proved that there exists a stationary process $((Y_t, X_t^\ast))_{t\in\Z}$ satisfying the iteration \eqref{eq:iter}, and the distribution of this process is unique.
If, in addition, the environment $(Y_t)_{t\in\Z}$ is ergodic, the process $((Y_t,X_t^\ast))_{t\in\Z}$ is also ergodic. As a result, the strong law of large numbers holds for any measurable function $\Psi:\Y\times\X\to\R$ with $\E (|\Psi (Y_0,X_0^\ast)|)<\infty$ i.e.
$$
\frac{1}{n}\sum_{k=1}^{n} \Psi (Y_k, X_k^\ast)\to \E (\Psi (Y_0,X_0^\ast)),\,\,\text{as}
\,\,n\to\infty,\,\,\P-\Pas
$$
In this case, the condition \eqref{eq:Truq} boils down to $\E \left[\log (\ga (Y_0))\right]<0$.
Truquet has also shown that if the iteration \eqref{eq:iter} is initialized with deterministic $x_0\in\X$, denoted as $(X_n^{x_0})_{n\in\N}$, then $\law{X_n^{x_0}}$ converges to
$\law{X_0^\ast}$ in total variation as $n\to\infty$, where $\law{X_0^\ast}$ here denotes the marginal distribution of the stationary solution $((Y_t, X_t^\ast))_{t\in\Z}$. On the other hand, contrarily to \cite{lovas}, Truquet did not provide a rate estimate.

% Strong mixing, contractivity and uniform contractivity relationship (Felsmann)
\medskip
Independent and identically distributed sequences of random variables $(Y_n)_{n\in\N}$  satisfy the long-term contractivity condition \eqref{eq:LT} if $\E (\ga (Y_0))<1$. Naturally, the question arises whether the inequality \eqref{eq:LT} still holds when $\E (\ga (Y_0))<1$ and the sequence $\alpha^Y (n)$, $n\in\N$, tends to zero rapidly enough. In his Master's thesis on the stability of general state Markov chains \cite{FelsmannThesis}, D\'aniel Felsmann provided an example of a strongly stationary stochastic process $(Y_n)_{n\in\Z}$ and a function $\ga:\Y\to (0,\infty)$ such that $\E (\ga (Y_0))<1$, yet $\lim_{n\to\infty}\E \left(\prod_{k=1}^{n}\ga (Y_k)\right)$, i.e., the long-term contractivity condition in \eqref{eq:LT} is not satisfied. Since the thesis is available only in Hungarian, the example in question is presented in Appendix \ref{ap:Felsmann}.

\medskip
In the forthcoming discussion, we refrain from assuming stationarity for the environment $(Y_n)_{n\in\N}$. Instead, we regard it purely as a sequence of weakly dependent random variables. Consequently, the anticipation of the existence of limiting distributions, as demonstrated in \cite{lovas}, \cite{lovasCLT}, or \cite{Truquet1}, is not viable. Instead, we employ the methodology delineated in Section \ref{sec:mixtrans} to establish the $L^1$-law of large numbers and the functional central limit theorem.

We impose the following additional assumptions on the environment. In absence of stationarity, it is required that the long-term contractivity condition \eqref{eq:LT} holds uniformly along trajectories. With the second condition, essentially, we stipulate that the minorization coefficient $\beta: [0,\infty)\times\Y\to [0,1)$ appearing in \eqref{eq:smallset} can be substituted with a constant on appropriately chosen level sets of the Lyapunov function.  This latter criterion is satisfied in all applications discussed in \cite{lovas}.

% Assumptions on the dynamics
\begin{assumption}\label{as:dynamics}
	We assume that the parametric kernel $Q:\Y\times\X\times\B (\X)\to [0,1]$ satisfies the drift condition \eqref{eq:Lyapunov} with $\ga,K:\Y\to (0,\infty)$ such that 
	$$
	\E \left(K(Y_j)\prod_{k=1}^{n}\ga (Y_{k+j})\right)<\infty,
	\,\,j\in\N,\,\, n\ge 1.
	$$
	Furthermore the following two conditions hold:
	\begin{enumerate}[A)]
		\item  
			$\displaystyle{
		\bar{\ga}:=\limsup_{n\to\infty}\sup_{j\ge -1} \E^{1/n}\left(K(Y_j)\prod_{k=1}^{n}\ga (Y_{k+j})\right)<1},
		$ where we define $K(Y_{-1}):=1$.
		
		\item  For some $0<r<1/\bar{\ga}-1$,
		the parametric kernel $Q$ obeys the minorization condition with
		$$R(y)=\frac{2K(y)}{r \ga (y)},\,\, \text{and}
		\,\,
		\bar{\beta}:=\sup_{y\in\Y}\beta \left(R(y),y\right)<1.
		$$
		
	\end{enumerate}

\end{assumption}

% V-bound for Phi
\begin{lemma}\label{lem:PhiVbound}
	Let $(X_n)_{n\in\N}$ be a Markov chain in a random environment $(Y_n)_{n\in\N}$ with parametric kernel $Q:\Y\times\X\times \B (\X)\to [0,1]$ satisfying part A) of Assumption \ref{as:dynamics}. Additionally, let $X_0$ be a random initial state independent of $\sigma (Y_n,\eps_{n+1}\mid n\in\N)$ and such that $\E (V(X_0))<\infty$.
	
	Then for any measurable function $\Phi:\X\to\R$ satisfying
	\begin{equation*}
	|\Phi (x)|^{r}\le C (1+V(x)),\,\, x\in\X.
	\end{equation*}
	for some constants $C > 0$ and $r\in\R$, it follows that
	$$
	\sup_{n\in\N}\E \left(|\Phi (X_n)|^{r}\right)<\infty.
	$$
\end{lemma}
\begin{proof}
Using Lemma \ref{lem:ita} from Appendix \ref{ap:CouplingCondition},we can derive the following estimate:
\begin{align*}
	\E ([Q(Y_{n-1})\ldots Q(Y_0)V](X_0))&\le \E (V(X_0))\E \left(\prod_{k=0}^{n-1}\ga (Y_k)\right)
	+\sum_{k=0}^{n-1}\E \left(K(Y_k)\prod_{j=k+1}^{n-1}\ga (Y_j)\right).
\end{align*}
Given Assumption \ref{as:dynamics}, the Cauchy criterion applies to the sum in the second term. Consequently,
$$
\sup_{n\in\N}	\E ([Q(Y_{n-1})\ldots Q(Y_0)V](X_0))<\infty,
$$
which further implies
\begin{align*}
	\sup_{n\in\N}\E \left(|\Phi (X_n)|^{r}\right)&\le  C (1+\sup_{n\in\N}\E (V(X_n)))=
	C (1+\sup_{n\in\N}\E ([Q(Y_{n-1})\ldots Q(Y_0)V](X_0)))
<\infty.
\end{align*}	
\end{proof}

The following lemma ensures that under the drift and minorization conditions specified in Definition \ref{as:dynamics}, the coupling condition holds for a suitable version of the chain.
% Transfer of mixing
\begin{lemma}\label{lem:MCREmixing}
	Let $(X_n)_{n\in\N}$ be a Markov chain in a random environment $(Y_n)_{n\in\N}$ with
	random initial state $X_0$ independent of $\sigma (Y_n,\eps_{n+1}\mid n\in\N)$ such that $\E (V(X_0))<\infty$.
	Then under Assumption \ref{as:dynamics}, the chain $(X_n)_{n\in\N}$ admits a random iteration representation of the form \eqref{eq:iter} with appropriate $f:\X\times \Y \times [0,1]\to\X$ which satisfies the coupling condition. More precisely, there exist $c_1,c_2>0$ constants depending only on $\bar{\ga},\bar{\beta}$ and $r$ such that
	$$
	\sup_{j\in\N}\P \left(Z_{j,j+n}^{X_j}\ne Z_{j,j+n}^{x}\right)
	\le c_1 \left(V(x)+\E(V(X_0))+1\right)e^{-c_2n^{1/2}},
	\,\,
	n\ge N
	$$	
	holds for $x\in\X$ and appropriate $N>0$.
\end{lemma}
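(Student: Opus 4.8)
The strategy is to construct the iteration map $f$ explicitly via a split-chain / coupling representation, and then to control the non-coupling probability by combining (i) the drift condition, which pushes the chain into a level set $\{V\le R\}$ often enough, with (ii) the minorization condition, which gives each visit to that level set a uniformly positive chance to coalesce. The subexponential rate $e^{-c_2 n^{1/2}}$ is the tell-tale sign that one counts visits in blocks of geometrically growing length and uses an $n^{1/2}$-many-blocks argument, so I would aim for exactly that block structure.

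\textbf{Step 1: the coupling construction.} First I would set up the standard noise-splitting device. For each environment value $y$, since $Q(y,x,\cdot)\ge(1-\beta(R(y),y))\kappa_{R(y)}(y,\cdot)$ on $\{V\le R(y)\}$, I can realize $f(x,y,\eps)$ so that, when two copies $Z^{X_j}$ and $Z^{x}$ are both in the small set $\{V\le R(y_{t})\}$ at time $t$, they use a common randomization (driven by one coordinate of $\eps$, say a ``coin'' comparing against $1-\beta$ and then, on success, drawing from $\kappa_R$) that makes them equal from time $t+1$ on with probability at least $1-\bar\beta$, independently of everything else. Concretely one enlarges $[0,1]$ to carry the Bernoulli coin plus the two conditional draws; measurability and the fact that this is a legitimate version of \eqref{eq:iter} is the content of the ``appropriate $f$'' and of the remark preceding Lemma~\ref{lem:Mixing}. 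This is mostly bookkeeping — I would quote Lemma~\ref{lem:ita} and the kernel formalism of Section~\ref{sec:MCRE} and not belabor it.

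\textbf{Step 2: from drift to excursions into the small set.} On the event that the two copies have not yet coupled, define $W_t=V(Z^{X_j}_{j,t})+V(Z^{x}_{j,t})$ (or handle the two copies separately and add). The drift condition gives $\E[W_{t+1}\mid \mathcal F_t]\le \ga(Y_t)W_t + 2K(Y_t)$ along the uncoupled paths, and the threshold $R(y)=2K(y)/(r\ga(y))$ is chosen precisely so that \emph{outside} the small set, i.e.\ when $W_t>R(Y_t)$, one has $\ga(Y_t)W_t+2K(Y_t)\le (1+r)\ga(Y_t)W_t$, i.e.\ a clean geometric contraction with the single factor $(1+r)\ga(Y_t)$. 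Since $r<1/\bar\ga-1$, Assumption~\ref{as:dynamics}A gives $\limsup_n \sup_j \E^{1/n}\big(\prod_{k=1}^n (1+r)\ga(Y_{k+j})\big)\le (1+r)\bar\ga<1$. I would use this, together with a Markov/Chebyshev bound and a union bound over $j$ as in Assumption~\ref{as:dynamics}A, to show that the probability that the chain \emph{fails} to enter the small set during a time window $[t,t+L]$ decays geometrically in $L$, uniformly in $j$ and in the starting point up to a prefactor linear in $V(x)+\E V(X_0)+1$ (the prefactor is where $\E V(X_0)<\infty$ and Lemma~\ref{lem:PhiVbound} enter).

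\textbf{Step 3: the block argument and assembling the rate.} Now split $[j,j+n]$ into consecutive blocks; choosing each block of length $L$ gives $\sim n/L$ blocks, and in each block, conditionally on not having coupled yet, there is probability $\ge p_0>0$ (depending only on $L$, $\bar\ga$, $r$) of visiting the small set and then, by Step~1, probability $\ge(1-\bar\beta)$ of coupling at that visit. Hence the non-coupling probability after $k$ blocks is $\lesssim (1-p_0(1-\bar\beta))^{k}$ up to the linear-in-$V(x)$ prefactor and a term from Step~2's geometric-in-$L$ error. Optimizing — balance the ``per-block failure'' term $(1-c)^{n/L}$ against the ``window-miss'' error $\rho^{L}$ — forces $L\asymp n^{1/2}$, which produces exactly $c_1(V(x)+\E V(X_0)+1)e^{-c_2 n^{1/2}}$, with $c_1,c_2$ functions of $\bar\ga,\bar\beta,r$ only; taking the sup over $j\in\N$ is harmless because every estimate above was uniform in $j$. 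The statement then follows for $N$ large enough that the block count is at least one.

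\textbf{Main obstacle.} The delicate point is Step~2: making the ``visits the small set in a window of length $L$ with probability $\ge p_0$'' estimate genuinely uniform in $j$ and quantitative, using only the $\limsup$-type Assumption~\ref{as:dynamics}A rather than a one-step contraction. One must be careful that $\ga(Y_t)$ may exceed $1$ for individual $t$, so the geometric decay only lives at the level of $n$-step products; turning that into a statement about first entrance times into $\{V\le R\}$ requires a stopped-supermartingale argument on the process $\prod_{k}((1+r)\ga(Y_k))^{-1}W_{t}\ind(\text{uncoupled, not yet entered})$, and checking that the optional-stopping / maximal inequality goes through with the integrability furnished by Assumption~\ref{as:dynamics} and Lemma~\ref{lem:ita}. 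Everything else — the noise splitting, the union bound over $j$, the $L\asymp n^{1/2}$ optimization — is then routine.
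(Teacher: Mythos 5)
Your plan matches the paper's proof in all essentials: the noise-splitting realization of the kernel on the level sets (the paper's Lemma \ref{lem:T}), the observation that the choice $R(y)=2K(y)/(r\ga(y))$ turns the drift outside $\{V\le R(y)\}$ into a clean $(1+r)\ga(Y_t)$ contraction, the $\bar{\beta}^m$ bound for failing to couple at $m$ joint visits to the small set, and the $m\asymp n^{1/2}$ balance between the two error terms. The one point you correctly flag as delicate --- uniformity of the per-block visit probability over the chain's position at the start of a block --- is handled in the paper not by a stopped supermartingale but by measuring excursion lengths from the successive visiting times $\sigma_k$ themselves (where $V(Z^1_{\sigma_k})+V(Z^2_{\sigma_k})\le 2R(y_{\sigma_k})$ by definition, with $\sigma_1$ treated separately to produce the $V(x_1)+V(x_2)$ prefactor) and union-bounding $\P(\sigma_m>n)\le\sum_{k}\P\left(\sigma_{k+1}-\sigma_k>\lfloor n/m\rfloor,\ \sigma_k\le k\lfloor n/m\rfloor\right)$, each increment's tail being controlled by an iterated Markov inequality and Assumption \ref{as:dynamics}~A); this achieves exactly what your proposed fix is meant to achieve.
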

\begin{proof}
The proof follows a similar argument as in \cite{lovasCLT}. For detailed steps, see Appendix \ref{ap:CouplingCondition}.
\end{proof}

With the above Lemma and the results obtained for general random iterations in Section \ref{sec:mixtrans} in hand, we are now ready to prove the following theorem.
% New results
\begin{theorem}\label{thm:MCRE_LLN_and_weakapproaching}

	Let $(X_n)_{n \in \N}$ be a Markov chain in a random environment $(Y_n)_{n \in \N}$ satisfying Assumption \ref{as:dynamics}, with $X_0$ a random initial state independent of $\sigma(Y_n, \varepsilon_{n+1} \mid n \in \N)$, such that $\E[V(X_0)] < \infty$.
	
	Consider a measurable function $\Phi: \X \to \R$ satisfying
	\begin{equation}\label{eq:Phi}
		|\Phi(x)|^p \le C(1 + V(x)), \quad x \in \X,
	\end{equation}
	for some constants $C > 0$ and $p > 1$, and define
	$$
	S_n := \sum_{k=1}^n \left( \Phi(X_k) -  \E[\Phi(X_k)]\right), \quad n \in \N.
	$$
	
	\medskip
	\noindent
	Under the above conditions, the following statements hold:
	\begin{enumerate}[A)]
		\item If the environment process $(Y_n)_{n \in \N}$ is strongly mixing, then the $L^1$ law of large numbers holds:
		$$
		\frac{S_n}{n} \stackrel{L^1}{\to} 0, \quad \text{as } n \to \infty.
		$$
		
		\item Suppose that for some constants $c > 0$ and $r > 2$, the mixing coefficients satisfy $\alpha^Y(n) \le c\, n^{-\frac{r}{r-2}}$, $n \in \N$, and \eqref{eq:Phi} holds with an exponent $p$ such that $r/2 < p \le r < \infty$. Then,
		$$
		\frac{S_n}{n} \stackrel{\text{a.s.}}{\to} 0, \quad \text{as } n \to \infty.
		$$
		
		\item If \eqref{eq:Phi} holds with $p > 2$ and the mixing coefficients satisfy
		\begin{equation}\label{eq:mix_decrease}
			\sum_{n \in \N} (n+1)^2 \alpha^Y(n)^{\frac{p-2}{p+2}} < \infty,
		\end{equation}
		then the distribution of $S_n/\sqrt{n}$ and $\mathcal{N}(0, \var(n^{-1/2}S_n))$ are weakly approaching. Moreover, there exists $\sigma>0$ such that for any $a > 0$, we have
		$$
		\limsup_{n \to \infty} \P\left(n^{-1/2}|S_n| \ge a\right) \le \int_{\R} \ind_{[-a,a]^c}(\sigma t) \frac{1}{\sqrt{2\pi}} e^{-t^2/2} \, \dint t.
		$$
	\end{enumerate}
\end{theorem}
\begin{proof}
	For any $n, j \in \N$, the sigma-algebras $\F_{0,j}^Y \vee \F_{n+j,\infty}^Y$ and $\F_{1,j}^\eps \vee \F_{n+j,\infty}^\eps$ are independent. Hence, by \cite[Lemma 8 on page 13]{BRADLEY19811}, we obtain
	\begin{align*}
		\alpha\left(
		\F_{0,j}^{\tilde{Y}}, \F_{n+j,\infty}^{\tilde{Y}}
		\right)
		&=
		\alpha\left( \F_{0,j}^Y \vee \F_{1,j}^\eps, \F_{n+j,\infty}^Y \vee \F_{n+j,\infty}^\eps \right) \\
		&\le 
		\alpha\left( \F_{0,j}^Y, \F_{n+j,\infty}^Y \right) + \alpha\left( \F_{1,j}^\eps, \F_{n+j,\infty}^\eps \right),
	\end{align*}
	where $\tilde{Y}_n = (Y_n, \eps_{n+1})$ for $n \in \N$.
	
	On the other hand, since $\F_{1,j}^\eps$ and $\F_{n+j,\infty}^\eps$ are also independent, we have $\alpha\left( \F_{1,j}^\eps, \F_{n+j,\infty}^\eps \right) = 0$. By the definition of the dependence coefficient \eqref{eq:dep}, the reverse inequality holds trivially, and therefore
	$$
	\alpha\left( \F_{0,j}^{\tilde{Y}}, \F_{n+j,\infty}^{\tilde{Y}} \right) = \alpha\left( \F_{0,j}^Y, \F_{n+j,\infty}^Y \right), \quad n, j \in \N.
	$$
	This implies that $\alpha^{\tilde{Y}}(n) = \alpha^Y(n)$ for all $n \in \N$.
	
	\medskip
	Furthermore, by Lemma \ref{lem:PhiVbound}, the bound $\sup_{n \in \N} \E |\Phi(X_n)|^p < \infty$ holds for the same $p > 1$ as in \eqref{eq:Phi}.
	
	\medskip
	Finally, by Lemma \ref{lem:MCREmixing}, the chain $(X_n)_{n \in \N}$ admits a random iteration representation satisfying the coupling condition with $b(n) = O(e^{-c n^{1/2}})$ for some $c > 0$.
	
	\medskip
	Therefore, all conditions of Theorem \ref{thm:GeneralResult} are satisfied, which completes the proof.
	
\end{proof}

Following Lindvall \cite{lindvall2002lectures}, Györfi and Morvai introduced a stronger concept of stability known as \emph{forward coupling} \cite{gyorfi2002}. This notion of stability has proven useful in the study of queuing systems. Specifically, under mild ergodicity assumptions, it was found that waiting times in single-server queuing systems operating in a subcritical regime are forward coupled with a stationary and ergodic sequence (See in Section \ref{sec:queuing}).

\begin{definition}\label{def_fwd_cpl}
	We say that the sequence $(W_n)_{n\in\N}$ is \emph{forward coupled} with the sequence $(W_n')_{n\in\N}$ if there exists an almost surely finite random time $\tau$
	such that
	$$
	W_n=W_n'
	$$
	for $n>\tau$.
\end{definition}

The next theorem revisits the case of a stationary environment, previously studied by Gerencsér \cite{gerencser2022invariant}, Rásonyi \cite{rasonyi2018}, Lovas \cite{lovas}, Truquet \cite{Truquet1}, and others, and establishes novel results.
\begin{theorem}\label{thm:stac_forward_coupling}
Let $(Y_n)_{n\in\Z}$ be a strongly stationary process, and assume that Assumption \ref{as:dynamics} holds. Furthermore, let $X_0$ be a random initial state independent of $\sigma (Y_m,\eps_{n+1}\mid m\in\Z,\,n\in\N)$. Consider the stationary process $(Y_n,X_n^\ast)_{n\in\Z}$ satisfying the iteration \eqref{eq:iter}.
Then, appropriate versions of $(X_n)_{n\in\N}$ and $(X_n^\ast)_{n\in\N}$ are forward coupled. Moreover, the tail probability of the random coupling time $\tau$ satisfies the estimate
$$
\P (\tau> n)\le c_1\left(1+\E(V(X_0))\right)e^{-c_2n^{1/2}}
$$
with constants $c_1,c_2>0$ depending only on $\bar{\beta},\bar{\ga}$ and $r$.
\end{theorem}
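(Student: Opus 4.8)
The plan is to establish forward coupling by constructing, on a common probability space, coupled versions of the two chains that agree from a random time onward, and then to read off the tail estimate for $\tau$ from the non-coupling probability bound of Lemma~\ref{lem:MCREmixing}. First I would invoke Lemma~\ref{lem:MCREmixing} to obtain the random iteration representation with $f$ satisfying the coupling condition; crucially, this same representation drives both chains, the only difference being the initial condition. Here $(X_n)_{n\in\N}$ starts from $X_0$, while $(X_n^\ast)_{n\in\N}$ is the marginal of the stationary solution. Since $(Y_n)_{n\in\Z}$ is strongly stationary and satisfies Assumption~\ref{as:dynamics}, the results recalled from \cite{Truquet1} (or the construction in \cite{lovas}) guarantee that the stationary process $(Y_n,X_n^\ast)_{n\in\Z}$ exists and is unique in law, so $X_0^\ast$ has a well-defined distribution; moreover one can realize $X_n^\ast = Z_{-\infty,n}^{X^\ast}$ in the sense that $X_n^\ast = Z_{m,n}^{X_m^\ast}$ for every $m\le n$, which is exactly the feature that makes the comparison with $Z_{0,n}^{x}$ (and hence with $Z_{0,n}^{X_0}$) work.

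The next step is to define the coupling time. For the deterministic-start chain $Z_{0,n}^{x}$ and the stationary chain $X_n^\ast$, set $\tau_x := \inf\{n\ge N : Z_{0,m}^{x} = Z_{0,m}^{X_0^\ast}\ \text{for all } m\ge n\}$, or more carefully use the coupling construction underlying Lemma~\ref{lem:MCREmixing}, where the event $\{Z_{j,j+n}^{X_j}\ne Z_{j,j+n}^{x}\}$ is controlled; the construction there produces a first coalescence time after which the two trajectories, driven by the same $f$ and the same $(Y_t,\eps_{t+1})$, remain equal forever (this is the standard consequence of the minorization/small-set splitting: once the two copies meet they can be made to coincide thereafter). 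Taking $\tau$ to be this coalescence time between the version of $(X_n)_{n\in\N}$ started at $X_0$ and the version of $(X_n^\ast)_{n\in\N}$, forward coupling in the sense of Definition~\ref{def_fwd_cpl} holds provided $\tau<\infty$ almost surely. To get the quantitative tail bound, I would write $\P(\tau>n) \le \P\big(Z_{0,n}^{X_0}\ne Z_{0,n}^{X_0^\ast}\big)$ up to the monotonicity built into the coalescence construction, then condition on $X_0$ and $X_0^\ast$ and apply the estimate of Lemma~\ref{lem:MCREmixing} with $x$ replaced by the random $X_0^\ast$: this gives $\P(\tau>n)\le c_1\,\E\big(V(X_0)+V(X_0^\ast)+1\big)e^{-c_2 n^{1/2}}$. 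Finally, $\E(V(X_0^\ast))<\infty$ follows from the same drift-iteration argument used in Lemma~\ref{lem:PhiVbound} (the stationary law inherits the uniform-in-$n$ bound $\sup_n \E(V(X_n^{x_0}))<\infty$, and total-variation convergence of $\law{X_n^{x_0}}$ to $\law{X_0^\ast}$ together with Fatou gives $\E(V(X_0^\ast))\le \sup_n\E(V(X_n^{x_0}))<\infty$, a quantity bounded by a constant depending only on $\bar\ga,\bar\beta,r$). Absorbing $\E(V(X_0^\ast))$ and the constant into $c_1$ yields the claimed bound $\P(\tau>n)\le c_1(1+\E(V(X_0)))e^{-c_2 n^{1/2}}$.

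The main obstacle I anticipate is making the forward-coupling construction genuinely \emph{forward} rather than merely a one-time meeting: one must ensure that after the two coupled trajectories first coincide they stay equal for all later times, which requires carefully coupling the driving noise so that both chains use the \emph{same} innovations once coalesced. This is where the specific form of $f$ from Lemma~\ref{lem:MCREmixing} — built via the Nummelin-type splitting associated with the minorization condition \eqref{eq:smallset} — is essential, and one has to check that the version of $(X_n^\ast)$ produced by feeding $X_0^\ast$ into this same $f$ is indeed stationary (equivalently, that the construction in Lemma~\ref{lem:MCREmixing} is consistent with the stationary solution of \cite{Truquet1}). A secondary technical point is handling the randomness of $X_0^\ast$ in the exponent-free constant when applying Lemma~\ref{lem:MCREmixing}: since the bound there is stated for fixed $x\in\X$, one applies it pathwise in $X_0^\ast$ (using independence of $X_0^\ast$ from the driving noise after time $0$, which holds because $X_0$ and hence the coupled construction is set up with $X_0$ independent of $\sigma(Y_m,\eps_{n+1})$ and $X_0^\ast$ is $\sigma(Y_m\mid m\le 0)$-measurable) and then integrates, so the finiteness of $\E(V(X_0^\ast))$ is exactly what closes the argument.
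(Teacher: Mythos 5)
Your overall architecture matches the paper's: drive both chains with the same iteration map $f$ (built from the Nummelin-type splitting of Lemma \ref{lem:T}) and the same innovations $(Y_t,\eps_{t+1})$, so that the trajectories coincide forever after first coalescence, and bound $\P(\tau>n)$ by the non-coupling probability. However, there is a genuine gap in the quantitative step. You propose to apply the bound of Lemma \ref{lem:MCREmixing} ``pathwise in $X_0^\ast$'' and then integrate, reducing everything to $\E(V(X_0^\ast))<\infty$. This does not work, because the quenched coupling estimate (Lemma \ref{lem:coupling}) contains the term $V(x)\prod_{j}\ga(Y_j)$ with the \emph{future} environment values $Y_0,Y_1,\dots$, and $X_0^\ast$ is a measurable function of the \emph{past} $(Y_m,\eps_{m+1})_{m<0}$. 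Since the environment is only stationary and mixing, not independent across time, $X_0^\ast$ is correlated with $(Y_j)_{j\ge 0}$, so
$\E\bigl[V(X_0^\ast)\prod_{j}\ga(Y_j)\bigr]$ does not factor into $\E\bigl[V(X_0^\ast)\bigr]\,\E\bigl[\prod_{j}\ga(Y_j)\bigr]$. (The independence you invoke holds for the noise $(\eps_n)_{n\ge 1}$ but not for the environment, and the derivation of Lemma \ref{lem:MCREmixing} explicitly uses independence of the initial state from $\sigma(Y_n,\eps_{n+1})$, which fails for $X_0^\ast$.) This is exactly the point the paper flags as ``the most significant distinction,'' and it is resolved there by a separate estimate (Lemma \ref{lem:VYbound}) showing
$\limsup_{n\to\infty}\E^{1/n}\bigl[V(X_0^\ast)\prod_{k=0}^{n-1}\ga(Y_k)\bigr]\le\bar{\ga}$,
proved by approximating $X_0^\ast$ in total variation by $Z_{-m,0}^{x,\mathbf{Y}}$ and iterating the drift condition conditionally on the whole environment trajectory.

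A secondary consequence of your route is that even if the factorization were legitimate, absorbing $\E(V(X_0^\ast))$ into $c_1$ would make the constant depend on the law of the environment, contradicting the theorem's claim that $c_1,c_2$ depend only on $\bar{\beta}$, $\bar{\ga}$ and $r$. The joint moment bound of Lemma \ref{lem:VYbound} is what lets the stationary initial condition contribute only a factor of order $(\ga')^{\lfloor n^{1/2}\rfloor}$ with $\ga'>\bar{\ga}$, keeping the constants parameter-dependent only. Your reduction of $\E(V(X_0^\ast))<\infty$ via Fatou and total-variation convergence is correct as far as it goes, but it is not the estimate the proof actually needs.
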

\begin{proof}
	The proof follows similar lines as the proof of Lemma \ref{lem:MCREmixing}. For readability, it is provided in Appendix \ref{ap:CouplingCondition}.
\end{proof}

The following important corollary of the above theorem provides an explicit and tractable upper bound for the total variation distance between $\law{X_n}$ and $\law{X_n^\ast}$, $n\in\N$. This bound is significantly sharper than those presented in our earlier paper \cite{lovas}.
\begin{corollary}\label{cor_fwd_dtv}
	Under the conditions of Theorem \ref{thm:stac_forward_coupling}, the following rate estimate holds:
	$$
	\lVert\law{X_n}-\law{X_n^\ast}\rVert_{TV}
	\le
	2 c_1\left(1+\E(V(X_0))\right)e^{-c_2n^{1/2}},
	$$
	with the same constants $c_1$ and $c_2$ as in Theorem \ref{thm:stac_forward_coupling}.
\end{corollary}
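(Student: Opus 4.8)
The plan is to derive Corollary \ref{cor_fwd_dtv} as an immediate consequence of Theorem \ref{thm:stac_forward_coupling} via the elementary coupling inequality for the total variation distance. Recall that if $(U_n)_{n\in\N}$ and $(U_n')_{n\in\N}$ are two processes defined on a common probability space and $\tau$ is a random time with $U_n = U_n'$ for all $n\ge\tau$, then for each fixed $n$ the event $\{U_n\ne U_n'\}$ is contained in $\{\tau>n\}$, so that $\lVert\law{U_n}-\law{U_n'}\rVert_{TV}\le 2\,\P(U_n\ne U_n')\le 2\,\P(\tau>n)$. (The factor $2$ reflects the convention $\lVert\mu-\nu\rVert_{TV}=\sup_{B}|\mu(B)-\nu(B)|\cdot 2$, i.e. the identity $\lVert\mu-\nu\rVert_{TV}=2\inf\P(U\ne U')$ over all couplings; with the alternative normalisation the $2$ disappears, but the statement as written carries it, so I adopt that convention.)

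First I would invoke Theorem \ref{thm:stac_forward_coupling} to obtain, on a suitable common probability space, versions of $(X_n)_{n\in\N}$ and $(X_n^\ast)_{n\in\N}$ and an almost surely finite random time $\tau$ with $X_n = X_n^\ast$ for $n\ge\tau$, together with the tail bound $\P(\tau>n)\le c_1(1+\E(V(X_0)))e^{-c_2 n^{1/2}}$ with the stated constants. Since total variation distance depends only on the marginal laws $\law{X_n}$ and $\law{X_n^\ast}$, passing to these particular versions is harmless. Next I would apply the coupling inequality above with $U_n = X_n$, $U_n' = X_n^\ast$: for every $n\in\N$,
$$
\lVert\law{X_n}-\law{X_n^\ast}\rVert_{TV}\le 2\,\P(X_n\ne X_n^\ast)\le 2\,\P(\tau>n)\le 2c_1\bigl(1+\E(V(X_0))\bigr)e^{-c_2 n^{1/2}},
$$
which is exactly the claimed estimate with the same $c_1,c_2$.

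There is essentially no obstacle here: all the substantive work — constructing the forward coupling and controlling the tail of $\tau$ through the drift and minorization conditions of Assumption \ref{as:dynamics} — has already been carried out in Theorem \ref{thm:stac_forward_coupling}. The only minor points to be careful about are (i) making explicit that $\{X_n\ne X_n^\ast\}\subseteq\{\tau>n\}$, which is immediate from the definition of forward coupling (Definition \ref{def_fwd_cpl}), and (ii) keeping the normalisation of $\lVert\cdot\rVert_{TV}$ consistent with the factor $2$ appearing in the statement. One could also remark that the bound holds uniformly in $n$ and that, because $e^{-c_2 n^{1/2}}$ is summable, it in particular yields $\sum_n \lVert\law{X_n}-\law{X_n^\ast}\rVert_{TV}<\infty$, a sharpening over the rate obtained in \cite{lovas}; but this is a comment rather than part of the proof.
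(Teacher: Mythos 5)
Your proof is correct and follows essentially the same route as the paper: both reduce the total variation bound to the coupling inequality $\lVert\law{X_n}-\law{X_n^\ast}\rVert_{TV}\le 2\,\P(X_n\ne X_n^\ast)$ (the paper phrases this as the optimal transportation cost characterization of total variation) and then apply the inclusion $\{X_n\ne X_n^\ast\}\subseteq\{\tau>n\}$ together with the tail bound from Theorem \ref{thm:stac_forward_coupling}. Your attention to the normalisation convention and to passing to the coupled versions matches the paper's ``slight abuse of notation'' remark, so nothing is missing.
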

\begin{proof}
According to the optimal transportation cost characterization of the total variation distance, we have
\begin{align*}
	\frac{1}{2}\lVert\law{X_n}-\law{X_n^\ast}\rVert_{TV}
	\le
	\inf_{\kappa\in\mathcal{C}(X_n,X_n^\ast)}
	\int_{\X\times\X}\ind_{x\ne y}\,\kappa (\dint x, \dint y),
\end{align*}
where $\mathcal{C}(X_n,X_n^\ast)$ denotes the set of probability measures on $\B(\X\times\X)$ with marginals $\law{X_n}$ and $\law{X_n^\ast}$, respectively. By a slight abuse of notation, we can assume that $(X_n)_{n\in\N}$ and $(X_n^\ast)_{n\in\N}$ are forward coupled. Using Theorem \ref{thm:stac_forward_coupling}, we can estimate the right-hand side further by writing
\begin{equation*}
	\inf_{\kappa\in\mathcal{C}(X_n,X_n^\ast)}
	\int_{\X\times\X}\ind_{x\ne y}\,\kappa (\dint x, \dint y)
	\le \P (X_n\ne X_n^\ast) = \P (\tau>n)
	\le c_1\left(1+\E(V(X_0))\right)e^{-c_2n^{1/2}}
\end{equation*}
which gives the desired inequality.
\end{proof}

In the remaining part of this section, we aim to verify the Central Limit Theorem (CLT) for certain functionals of a Markov chain in a non-stationary random environment. There are multiple approaches to achieve this result. For instance, we can use Corollary \ref{cor:ekstrom}, or the results described in Merlev{\`e}de and Peligrad's recent paper \cite{merlevede2020functional}. The advantage of the latter approach is that it not only yields the usual CLT but also the functional CLT, and the conditions on the decay of the mixing properties are weaker. Regardless of the chosen method, we must verify the equality \eqref{eq:varSn}, which states that the variance of the sum grows at the same rate as the sum of the variances. Using either Remark \ref{rem:ekstromremark} or Lemma \ref{lem:PhiVbound}, it can be easily shown that $\var (S_n)$ grows at most at the same rate as the sum of the variances $\var (\Phi (X_k))$, $k=1,2,\ldots,n$. However, it is far from trivial to establish a lower bound for $\var (S_n)$ to ensure the equality in \eqref{eq:varSn}. For this, we 
follow a new approach employing the law of total variance and the Cramér-Rao bound known from estimation theory and information geometry.
For any fixed $n\ge 1$, we treat the conditional distribution of $\ul{X}:=(X_1,\ldots,X_n)$ given the environment $\ul{Y}:=(Y_0,\ldots,Y_{n-1})$ as a parametric statistical model, where the environment $\ul{Y}$ is considered as the parameter.
For technical reasons, we will restrict our discussion in the remaining part of this section to the case when $\Y = \R^m$ with $m \in \N_{+}$.
\begin{assumption}\label{as:Fisher}
	We assume the existence of a reference Borel measure $\nu$ on $\mathcal{B}(\X)$ such that, for all $(y,x) \in \Y \times \X$, the measure $Q(y,x,\cdot)$ is absolutely continuous with respect to $\nu$. Furthermore, the parametric family of conditional densities
	$$
	\Y\ni y\mapsto p_y(z\mid x)=\frac{\dint Q(y,x,\cdot)}{\dint \nu}(z),\,\, (x,z)\in\X\times\X
	$$
	and the measurable function $\Phi:\X\to\R$, playing the role of an estimator,
	satisfy all the regularity conditions required for the Cramér-Rao inequality (See Theorem 1A on page 147 in \cite{BorovkovCramer} or for a more general version, Corollary 5 in \cite{BercherCramer}).
\end{assumption}

\begin{lemma}\label{lem:VarSn}
	Let $(X_n)_{n\in\N}$ be a Markov chain in a random environment $(Y_n)_{n\in\N}$ with parametric kernel
	$Q:\Y\times\X\times\B (\X)\to [0,1]$ and $\Phi:\X\to\R$ satisfying Assumption \ref{as:Fisher}. Suppose that $X_0$ is a random initial state independent of $\sigma (Y_n,\eps_{n+1}\mid n\in\N)$ with distribution $\law{X_0}$ being absolutely continuous w.r.t. $\nu$, where $\nu$ is as in Assumption \ref{as:Fisher}.
	
	Then, for the variance of the parial sum $S_n=\Phi (X_1)+\ldots+\Phi (X_n)$, we have the lower bound
	$$
	\var (S_n) \ge \sum_{k=0}^{n-1} \E \left[\frac{1}{r (I_k)}\lVert 	\partial_{y_k}\E \left[S_n\mid \ul{Y}\right]\rVert^2\right],\,\,n\ge 1,
	$$
	where $r(I_k)$ denotes the spectral radius of the Fisher information matrix
	$$
	I_k:=
	\E\left[\left(\partial_{y}\log p_y(X_{k+1}\mid X_{k})\right)^\top\left(\partial_{y}\log p_y(X_{k+1}\mid X_{k})\right)\middle| \ul{Y}\right].
	$$
\end{lemma}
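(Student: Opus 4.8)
The plan is to combine the \emph{law of total variance} with a conditional application of the \emph{Cramér--Rao inequality}, treating the environment $\ul{Y}$ as the unknown parameter. First I would write
$$
\var (S_n) = \E\bigl[\var (S_n\mid \ul{Y})\bigr] + \var\bigl(\E[S_n\mid \ul{Y}]\bigr)\ \ge\ \E\bigl[\var (S_n\mid \ul{Y})\bigr],
$$
so that it suffices to produce, for almost every realization $\ul{y}=(y_0,\dots,y_{n-1})$, a lower bound for $\var (S_n\mid \ul{Y}=\ul{y})$ and then integrate over $\ul{Y}$.

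Next I would identify the conditional statistical model. Writing $p_0:=\frac{\dint\law{X_0}}{\dint\nu}$ (which exists by hypothesis and does not depend on $\ul{y}$, since $X_0$ is independent of the environment), the conditional law of $(X_0,\dots,X_n)$ given $\ul{Y}=\ul{y}$ is that of an inhomogeneous Markov chain with density
$$
p(x_0,\dots,x_n\mid \ul{y}) = p_0(x_0)\prod_{k=0}^{n-1}p_{y_k}(x_{k+1}\mid x_k)
$$
with respect to $\nu^{\otimes (n+1)}$. Viewing $\ul{y}\in(\R^m)^n$ as the parameter and $S_n=\Phi(X_1)+\dots+\Phi(X_n)$ as a scalar statistic with mean function $g(\ul{y}):=\E[S_n\mid \ul{Y}=\ul{y}]$, the multivariate Cramér--Rao inequality---legitimate under Assumption \ref{as:Fisher}---yields
$$
\var (S_n\mid \ul{Y}=\ul{y})\ \ge\ \nabla g(\ul{y})^{\top}\,\mathcal{I}(\ul{y})^{-1}\,\nabla g(\ul{y}),
$$
where $\mathcal{I}(\ul{y})$ is the $(nm)\times(nm)$ Fisher information matrix of the model.

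The heart of the argument is showing that $\mathcal{I}(\ul{y})$ is block diagonal with $k$-th block $I_k(\ul{y})$. Since $\log p$ is a sum, the block of the score $\nabla_{\ul{y}}\log p$ in the $y_k$-direction equals $s_k:=\partial_{y_k}\log p_{y_k}(X_{k+1}\mid X_k)$, a function of $(X_k,X_{k+1})$ alone. Differentiating the identity $\int_{\X}p_{y_k}(z\mid x_k)\,\nu(\dint z)=1$ under the integral sign (permitted by the regularity conditions in Assumption \ref{as:Fisher}) gives $\E[s_k\mid X_k,\ul{Y}=\ul{y}]=0$. Hence for $k<l$, conditioning on $(X_0,\dots,X_l)$---which determines $s_k$---and invoking the Markov property yields $\E\bigl[s_k^{\top}s_l\mid \ul{Y}=\ul{y}\bigr]=\E\bigl[s_k^{\top}\,\E[s_l\mid X_l,\ul{Y}=\ul{y}]\bigr]=0$, and symmetrically for $k>l$; so the off-diagonal blocks vanish and the $k$-th diagonal block is exactly $I_k(\ul{y})=\E[s_k^{\top}s_k\mid \ul{Y}=\ul{y}]$. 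Consequently $\mathcal{I}(\ul{y})^{-1}=\mathrm{diag}\bigl(I_0(\ul{y})^{-1},\dots,I_{n-1}(\ul{y})^{-1}\bigr)$, and the Cramér--Rao bound becomes
$$
\var (S_n\mid \ul{Y}=\ul{y})\ \ge\ \sum_{k=0}^{n-1}\bigl(\partial_{y_k}g(\ul{y})\bigr)^{\top}I_k(\ul{y})^{-1}\bigl(\partial_{y_k}g(\ul{y})\bigr).
$$

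Finally, each $I_k(\ul{y})$ is symmetric positive semidefinite with largest eigenvalue $r(I_k(\ul{y}))$, so $I_k(\ul{y})^{-1}\succeq r(I_k(\ul{y}))^{-1}\mathrm{Id}_m$ and the $k$-th summand is at least $r(I_k(\ul{y}))^{-1}\lVert\partial_{y_k}g(\ul{y})\rVert^{2}$. Substituting $\ul{y}=\ul{Y}$, recalling $g(\ul{Y})=\E[S_n\mid \ul{Y}]$, and taking expectations in the total-variance bound gives the claim. I expect the main obstacle to be the rigorous justification of the block-diagonal structure of $\mathcal{I}$---specifically the interchange of differentiation and integration and the conditional centering of the scores, together with the attendant measurability and integrability bookkeeping---which is precisely what the regularity conditions folded into Assumption \ref{as:Fisher} are designed to supply; a secondary point is the possible singularity of some $I_k$, which is either excluded by those conditions or handled by a standard ridge-type perturbation argument.
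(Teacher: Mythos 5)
Your proposal is correct and follows essentially the same route as the paper: law of total variance, conditional Cramér--Rao with $\ul{Y}$ as parameter, block-diagonality of the Fisher information from the Markov factorization of the likelihood, and the spectral-radius bound $I_k^{-1}\succeq r(I_k)^{-1}\mathrm{Id}$. The only cosmetic differences are that you apply Cramér--Rao to the scalar statistic $S_n$ directly whereas the paper applies it to the vector $\Phi(\ul{X})$ and contracts with $\mathbf{1}$ (equivalent), and you justify the vanishing of off-diagonal blocks via centered scores and the Markov property while the paper reads it off the mixed second derivatives of the log-likelihood.
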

\begin{proof}
	By the law of total variance, we can write
	\begin{align}\label{eq:totvar}
	\begin{split}
	\var (S_n) &= \E \left[\var (S_n \mid \F_{0,n-1}^Y)\right] + 
	\var\left(\E \left[S_n\mid\F_{0,n-1}^Y \right]\right)
	\ge \E \left[\var (S_n \mid\F_{0,n-1}^Y)\right].
	\end{split}
	\end{align}
	For typographical reasons, we use the notations $\var_{\ul{Y}}(S_n):=\text{Var} (S_n \mid \ul{Y})$ and $\cov_{\ul{Y}}(\cdot,\cdot):=\cov(\cdot,\cdot\mid\ul{Y})$. 
	The conditional variance of the partial sum can be expressed as follows:
	\begin{equation}\label{eq:varbound}
	\var_{\ul{Y}}(S_n)=\sum_{k,l=1}^{n}\cov_{\ul{Y}} (\Phi (X_k),\Phi (X_l)) = \mathbf{1}^\top\cov_{\ul{Y}} (\Phi (\ul{X}))\mathbf{1},
	\end{equation}
	where $\cov_{\ul{Y}} (\Phi (\ul{X}))$ is the conditional covariance matrix of the random vector 
	$$
	\Phi(\ul{X})=(\Phi (X_1),\ldots,\Phi (X_n))^\top,
	$$ 
	$\mathbf{1} = [1,\ldots,1]^\top\in\R^n$ denotes the size $n$ vector of $1$-s.
		
	Using the Cram\'er-Rao inequality, we arrive at the following estimate
	\begin{equation}\label{eq:CR}
	\cov_{\ul{Y}} (\Phi (\ul{X}))\ge (\partial_{\ul{y}}\phi (\ul{Y}))I(\ul{Y})^{-1}(\partial_{\ul{y}}\phi (\ul{Y}))^\top,
	\end{equation}
	where $\phi:\Y^n\to\R^n$, $\phi(\ul{y})=\E\left[\Phi (\ul{X})\mid \ul{Y}=\ul{y}\right]$ and $I(\ul{y})$ is the Fisher information matrix. 
	
	\smallskip
	Note that $\partial_{\ul{y}}\phi (\ul{y})\in\lin (\Y^n,\R^n)$ and $I(\ul{y})\in\lin (\Y^n)$ are linear operators with block representation
	\begin{align*}
	\partial_{\ul{y}}\phi(\ul{y}) &= \left[\partial_{y_0}\phi(\ul{y}),\ldots,\partial_{y_{n-1}}\phi(\ul{y})\right] \\
	[I(\ul{y})]_{kl} &= \E\left[\left(\partial_{y_k}\log p(\ul{X}\mid\ul{y})\right)^\top\left(\partial_{y_l}\log p(\ul{X}\mid\ul{y})\right)\middle| \ul{Y}=\ul{y}\right] = -\E \left[\partial^2_{y_k y_l}\log p(\ul{X}\mid\ul{y})\middle| \ul{Y}=\ul{y}\right],
	\end{align*}
	where $\partial_{y_i}\phi(\ul{y})\in\lin (\Y,\R^n)$, $i=0,\ldots,n-1$, and $[I(\ul{y})]_{kl}\in\lin (\Y)$, $k,l=0,\ldots,n-1$.
	
	By the Markov property, for the conditional density of $\ul{X}$ given $\ul{y}$
	can be expressed as the product of $\pi (X_0)=\frac{\dint\law{X_0}}{\dint\nu}$ and the parametric transition densities $ p_{y_{k-1}}(x_{k}\mid x_{k-1})$, $k=1,\ldots,n$. Consequently, we obtain 
	$$
	\log p(\ul{X}\mid \ul{y})=\log \pi (X_0)+\sum_{k=1}^{n} \log p_{y_{k-1}}(X_{k}\mid X_{k-1}),
	$$
	which implies that the Fisher information matrix has a block-diagonal form:
	$$
	[I(\ul{y})]_{kl}
	=
	\begin{cases} 
	I_k:=
	\E \left[\left(\partial_{y_k}\log p_{y_{k}}(X_{k+1}\mid X_{k})\right)^\top\left(\partial_{y_k}\log p_{y_{k}}(X_{k+1}\mid X_{k})\right)\middle| \ul{Y}=\ul{y}\right] & \text{if}\,\, k=l \\
	0 & \text{if}\,\, k\ne l.
	\end{cases}
	$$ 
	
	Observe that $\mathbf{1}^\top \partial_{\ul{y}}\phi (\ul{y})= \partial_{\ul{y}}\E \left[S_n\mid \ul{Y}=\ul{y}\right]\in \lin (\Y^n,\R)$ hence substituting back the above form of the Fisher information matrix into \eqref{eq:varbound}
	and applying \eqref{eq:CR}
	yields
	\begin{align*}\label{eq:varsum}
	\begin{split}
	\var_{\ul{Y}}(S_n)
	&\ge
	\sum_{k=0}^{n-1} \partial_{y_k}
	\E \left[S_n\mid \ul{Y}\right] 
	I_k^{-1}
	\partial_{y_k}\E \left[S_n\mid \ul{Y}\right]^\top
	\ge \sum_{k=0}^{n-1}\frac{1}{r (I_k)}\lVert 	\partial_{y_k}\E \left[S_n\mid \ul{Y}\right]\rVert^2,
	\end{split}
	\end{align*}
	where $r (I_k)$ refers to the spectral radius of the Fisher operator $I_k$. At least, by \eqref{eq:totvar}, we obtain
	\begin{equation}\label{eq:varsum2}
	\var (S_n) \ge \sum_{k=0}^{n-1} \E \left[\frac{1}{r (I_k)}\lVert\partial_{y_k}\E \left[S_n\mid \ul{Y}\right]\rVert^2\right].
	\end{equation}
	
\end{proof}

Combining the above lemma with Corollary \ref{cor:ekstrom} leads to the following significant conclusion.
\begin{corollary}\label{cor:ekstrom2}
In addition to the conditions of Theorem \ref{thm:MCRE_LLN_and_weakapproaching} and Lemma \ref{lem:VarSn}, assume that the function $\Phi:\X\to\R$ satisfies inequality \eqref{eq:Phi} for some exponent $p>2$. Further, suppose that
$$
\liminf_{n\to\infty}\frac{1}{n}\sum_{k=0}^{n-1}\E \left[\frac{1}{r (I_k)}\lVert\partial_{y_k}\E \left[S_n\mid \ul{Y}\right]\rVert^2\right]>0.
$$
Then it is evident that the sequence $\left(n/\var (S_n)\right)_{n\ge1}$ is bounded. Consequently, by Corollary \ref{cor:ekstrom}, it follows that in addition to the results of Theorem \ref{thm:MCRE_LLN_and_weakapproaching}, 
we also have
$$
\frac{1}{\var (S_n)^{1/2}}S_n\Longrightarrow\mathcal{N} (0,1),\,\, as\,\, n\to\infty
$$
in distribution. Therefore, the Central Limit Theorem holds.
\end{corollary}

To derive a lower bound for $\var(S_n)$, we strongly relied on the Cram\'er-Rao bound, which requires certain differentiability and regularity conditions. However, these differentiability assumptions can be relaxed, either by applying a version of the Cramér–Rao inequality developed for non-differentiable models \cite{cramer_nondiff}, or by using some other information inequality (see, e.g., paragraph 3 in Chapter 30 of \cite{BorovkovCramer}).

It seems that it might be as difficult to evaluate the lower bound in \eqref{eq:varsum2} as to directly verify that
\[
\var(S_n) = O\left(\sum_{k=1}^n \var(\Phi(X_k))\right).
\]
To show that the lower bound on the right-hand side of \eqref{eq:varsum2} is tractable, we establish a more explicit sufficient condition under the assumptions that $\X, \Y \subseteq \R$ and the function $f$ in the iteration \eqref{eq:iter} satisfies some monotonicity condition.
\begin{proposition}\label{prop2}
	Assume that $\X, \Y \subseteq \R$, moreover the function $f:\X \times \Y \times [0,1] \to \R$ defining the iteration \eqref{eq:iter} is continuously differentiable and monotonically increasing in its first two variables. Besides the assumptions of Lemma \ref{lem:VarSn}, assume also that
	\begin{equation}\label{eq:spectral_cond}
		r^* := \sup_{n \in \N} \E \left[r(I_n)\right] = \sup_{n \in \N} \E \left[\left(\partial_y \log p_y (X_{n+1} \mid X_n)\right)^2\right] < \infty,
	\end{equation}
	and there exists a function $g: \Y \times [0,1] \to (0, \infty)$ such that
	\[
	\partial_y f(x, y, u) \ge g(y, u), \quad x, y \in \R,\, u \in [0,1].
	\]
	
	Then, for the variance of the partial sums $S_n = X_1 + \ldots + X_n$, we have
	\[
	\var(S_n) \ge \frac{1}{r^*} \sum_{k=0}^{n-1} \E \left[g(Y_k, \eps_1)\right]^2.
	\]
\end{proposition}
\begin{proof}
	
By the Cauchy-Schwartz inequality, for $k=0,\ldots,n-1$, we have
$$
\E \left[\partial_{y_k}\E \left[S_n\mid \ul{Y}\right]\right]^2
\le \E \left[\frac{1}{r (I_k)}\left(\partial_{y_k}\E \left[S_n\mid \ul{Y}\right]\right)^2\right] \E [r (I_k)]
$$
hence using the inequality \eqref{eq:spectral_cond}, we arrive at
\begin{align}\label{eq:esti}
	\begin{split}
	\var (S_n) \ge \sum_{k=0}^{n-1} \E \left[\frac{1}{r (I_k)}\lVert\partial_{y_k}\E \left[S_n\mid \ul{Y}\right]\rVert^2\right]\ge \frac{1}{r^*}\sum_{k=0}^{n-1}\E \left[\partial_{y_k}\E \left[S_n\mid \ul{Y}\right]\right]^2. 
\end{split}
\end{align}
	
Fix $0 \le k < n$. Then, by the monotonicity assumption, we have $\partial_{y_k} \E \left[X_l \mid \ul{Y} \right] \ge 0$ for any $l = 1, \ldots, n$. Hence, using that the process $(\eps_n)_{n \ge 1}$ is i.i.d. and independent of the environment $(Y_n)_{n \in \N}$, we can estimate as
\begin{align*}
	\partial_{y_k}\E \left[S_n\mid \ul{Y}\right] =
	\sum_{l=1}^n \partial_{y_k}\E \left[X_l\mid \ul{Y}\right]
	\ge \partial_{y_k}\E \left[X_{k+1}\mid \ul{Y}\right]
	= \E \left[\partial_y f(X_{k},Y_k,\eps_{k+1})\mid \ul{Y}\right]
	\ge \E [g(Y_k,\eps_1)\mid \ul{Y}].
\end{align*}
Substituting this estimate back into the inequality \eqref{eq:esti}, we obtain the desired result.

\end{proof}

\begin{remark}
If the environment $(Y_n)_{n \in \Z}$ is stationary, then the lower bound for the variance of $S_n = X_1 + \ldots + X_n$ given in Proposition \ref{prop2} simplifies to
\[
\var(S_n) \ge \frac{n}{r^*} \E \left[g(Y_0, \eps_1)\right]^2,
\]
so the inequality required in the assumptions of Corollary \ref{cor:ekstrom2} is clearly satisfied.
\end{remark}

Using the recent results by Merlev\`ede and Peligrad in \cite{merlevede2020functional}, we can derive the following functional central limit theorem 
for certain functionals of a MCRE.
This result is significantly stronger than the statement formulated in Corollary \ref{cor:ekstrom2}. 
\begin{theorem}\label{thm:MCRE_FCLT}	
	Assume that the conditions of Corollary \ref{cor:ekstrom2} hold, with the exception that instead of the inequality \eqref{eq:mix_decrease} appearing among the conditions of Theorem 2.5, we only require that
	$$
	\sum_{n\ge 1} n^{2/(p-2)}\alpha^Y(n) < \infty.
	$$
	
	Furthermore, for $n \ge 1$, let $v_n(t) = \min \{1 \le k \le n \mid \var(S_k) \ge t\var(S_n)\}$, and for $1 \le k \le n$, define
	$$
	\xi_{k,n} = \frac{\Phi(X_k) - \E[\Phi(X_k)]}{\var(S_n)^{1/2}}.
	$$
	
	Then the sequence of functions $B_n(t) = \sum_{k=1}^{v_n(t)} \xi_{k,n}$, $t \in (0, 1]$, $n \ge 1$, converges in distribution in $D([0, 1])$ (equipped with the uniform topology) to $(B_t)_{t\in [0,1]}$, where $(B_t)_{t\in [0,1]}$ is a standard Brownian motion.
\end{theorem}

\begin{proof}
	
	In the proof, we check the conditions of Corollary 2.2 found on page 3 of \cite{merlevede2020functional}. These conditions can be divided into two groups: conditions on the moments of the random variables $\xi_{k,n}$, $1 \leq k \leq n$, and conditions on the mixing properties of these variables. We first verify the fulfillment of the conditions in the first group.
	
	% Verification of Lindeberg-type conditions
	By Lemma \ref{lem:PhiVbound}, we have $c_1 = \sup_{n\in\N} \E (|\Phi(X_n)|^p) < \infty$, and Lemma \ref{lem:VarSn} ensures that $n/\var(S_n) < c_2$ for some constant $c_2 > 0$ and for all $n \geq 1$. Then, we have
	\begin{align*}
	\lVert \xi_{k,n}\rVert_p &= \frac{1}{\var(S_n)^{1/2}} \lVert \Phi(X_k) - \E[\Phi(X_k)] \rVert_p 
	\leq 
	2 n^{-1/2}\left(\frac{n}{\var(S_n)}\right)^{1/2}  \lVert \Phi(X_k) \rVert_p \leq 2 c_1^{1/r} c_2^{1/2} n^{-1/2},
	\end{align*}
	from which it immediately follows that $\max_{1 \leq k \leq n} \lVert \xi_{k,n} \rVert_p \to 0$ as $n \to \infty$, and furthermore, $\sup_{n \geq 1} \sum_{k=1}^{n} \lVert \xi_{k,n} \rVert_p^2 < \infty$.
	Comparing this with the remark following Corollary 2.2 in \cite{merlevede2020functional}, we conclude that condition (3) and the first part of condition (7) related to moments in \cite{merlevede2020functional} are satisfied. Additionally, condition (1) is inherently satisfied by the array $\{\xi_{k,n} \mid 1 \leq k \leq n,\,\, n \geq 1\}$ due to its definition.
	
	% Keverési tulajdonságokkal kapcsolatos feltételek ellenőrzése
	We now proceed to verify the conditions related to the mixing properties. The precise definitions of the weak strong mixing coefficients $\alpha_1(k)$, $\alpha_{1,n}(k)$, and $\alpha_{2,n}(k)$ can also be found in \cite{merlevede2020functional}. For our purposes, it suffices to note that the following sequence of inequalities holds:
	$$
	\alpha_{1,n}(k) \leq \alpha_{2,n}(k) \leq 2\alpha^{\Phi(X)}(k), \quad 1 \leq k \leq n, \,\, n \geq 1,
	$$
	therefore, to verify the second part of condition (7) in \cite{merlevede2020functional} and to satisfy condition (8), it is enough to establish that
	\begin{equation}\label{eq:ezkell}
	\sum_{n \geq 1} n^{2/(p-2)}\alpha^{\Phi(X)}(n) < \infty.	
	\end{equation}
	
	From here, the proof continues by estimating the mixing coefficient $\alpha^{\Phi(X)}$ and follows exactly the same path as described in the proof of Theorem \ref{thm:GeneralResult} and \ref{thm:MCRE_LLN_and_weakapproaching}.
	By Lemma \ref{lem:MCREmixing}, the chain $(X_n)_{n\in\mathbb{N}}$ admits a random iteration representation satisfying the coupling condition with $b(n)=O(e^{-cn^{1/2}})$, $c>0$. Therefore, we can estimate
	\begin{align*}
	\alpha^{\Phi(X)}(n) &\le \alpha^{X}(n) \le \alpha^{Y} (\lfloor n/2\rfloor +1) + b(n-\lfloor n/2\rfloor), 
	\quad n > 2N,
	\end{align*}
	where $N>0$ is as in Lemma \ref{lem:MCREmixing}. Considering that $\sum_{n=1}^{\infty} n^{2/(p-2)} b(n-\lfloor n/2\rfloor) < \infty$, and that the condition $\sum_{n\ge 1} n^{2/(p-2)}\alpha^Y(n) < \infty$ assumed, we obtain that the infinite series appearing in condition \eqref{eq:ezkell} is convergent.
	
	With this, we have verified all of the conditions of Corollary 2.2 in \cite{merlevede2020functional}, from which it follows that the statement formulated in the theorem holds. This completes the proof.
	
\end{proof}

\begin{remark}
In the proof of the above Theorem \ref{thm:MCRE_FCLT}, we do not use Theorem 2.1 of \cite{merlevede2020functional} in its strongest form, but rather we verify only the conditions of Corollary 2.2. It is likely that Theorem \ref{thm:MCRE_FCLT} can be further strengthened and generalized for Markov chains in a random environment satisfying a weak-strong mixing-type condition. However, this is beyond the scope of the present study.
\end{remark}

\section{Single server queuing systems}\label{sec:queuing}

In the early 20th century, Danish engineer Agner Krarup Erlang pioneered what would later be known as queuing theory \cite{erlang1909}. His work at a telephone company, where he developed a mathematical model to determine the minimum number of telephones needed to handle calls efficiently, laid the foundation for this field. Today, queuing theory extends far beyond telecommunications, significantly influencing areas like inventory management, logistics, transportation, industrial engineering, and service design. Notably, it plays a key role in reducing costs within product-service design \cite{Nguyen2017}.

\smallskip
% 0) Mi milyen típusú sorbanállási modellt vizsgálunk. 
For simplicity, we focus on single-server queuing systems with infinite buffer and first-in, first-out (FIFO) service discipline (see Figure \ref{fig:queue}).  It's worth noting that more complex queuing systems, such as those with multiple servers, can be analyzed using analogous methods.
\begin{figure}[!h]
	\centering
	\begin{tikzpicture} 
		% Draw the buffer with vertical rules
		\draw[thick, fill=blue!20] (0,0) -- ++(2cm,0) -- ++(0,-1.5cm) -- ++(-2cm,0);
		\foreach \i in {1,...,4}
		\draw[thick] (2cm-\i*10pt,0) -- +(0,-1.5cm);
		
		% Draw the server (circle)
		\draw[thick, fill=orange!30] (4,-0.75) circle [radius=0.75cm];
		
		% Arrows and labels
		\draw[->, thick] (2,-0.75) -- +(36pt,0);
		\draw[<-] (0,-0.75) -- +(-20pt,0) node[left] {Requests};
		
		% Labels for the buffer and server
		\node[align=center] at (1cm,-2) {\textbf{Buffer}\\(Infinite capacity)};
		\node[align=center] at (4cm,-2) {\textbf{Server}\\(FIFO discipline)};
		
	\end{tikzpicture}
	\caption{Schematic overview of the single-server queuing system under investigation: $Z_n$ is the time between the arrivals of the $n$-th and $(n+1)$-th customers, $S_n$ is the time to serve the $n$-th customer, and $W_n$ is their waiting time before service.}
	\label{fig:queue}
\end{figure}
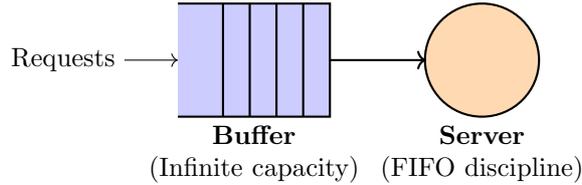
Let the time between the arrival of customers $n+1$ and $n$ is be denoted by $Z_{n+1}$, and the service time for customer $n$ is given by $S_n$, for $n\in\N$. Then, the evolution of the waiting time $W_n$ of customer $n$ can be described by the \emph{Lindley recursion} 
\begin{equation}\label{eq:Lindley}
W_{n+1}=(W_n+S_{n}-Z_{n+1})_+,\,\, n\in\N,
\end{equation}
with $W_0:=0$, meaning that we begin with an empty queue.

\smallskip
The ergodic theory of general state space Markov chains allows to treat the case where $(S_n)_{n\in\N}$, $(Z_n)_{n\in\N}$ are i.i.d.\ sequences, independent of each other. 
However, dependencies frequently arise in queuing networks when the arrival processes intertwine with the departure processes of other queues. Additionally, factors like complex processing operations, including batching or the presence of multiple distinct customer classes, can introduce intricate interdependencies within the system. Consequently, the renewal process assumption for the arrival process, which makes queueing models amenable to simple analysis, no longer holds. 

\smallskip
% 3) Van konvergencia stac. eloszláshoz tök általános esetben és ergodicitás is. Loynes cikke kell + Györfi László és Morvai Gusztáv cikke.
To the best of our knowledge, Loynes was the first who studied the stability of waiting times
under the assumption that the pair $(S,Z)$ is merely stationary and ergodic \cite{loynes1962}. Stability of $W_n$, $n\in\N$ means here that there exist a unique limit distribution of $W_n$ as $n\to\infty$, whatever the initialization $W_{0}$ is. Loynes introduced the terminology categorizing queues as 'subcritical' when $\E(S_0) < \E(Z_0)$, 'critical' when $\E(S_0) = \E(Z_0)$, and 'supercritical' when $\E(S_0) > \E(Z_0)$. Loynes proved for single-server queuing systems that subcritical queues are stable, supercritical queues are unstable and critical queues can be stable, properly substable, or unstable \cite{loynes1962}. 

\smallskip
Building upon Loynes's foundational work, Gy\"orfi and Morvai expanded and refined the understanding of these queuing systems in \cite{gyorfi2002}. They extended Loynes' result by proving that for subcritical queues, an even stronger version of stability called \emph{forward coupling} holds also true (c.f. Definition \ref{def_fwd_cpl}). Gy\"orfi and Morvai's theorem concerning queues in this general setting reads
as follows:
\begin{theorem}\label{thm:gyorfi}
	Let $\xi_n = S_n-Z_{n+1}$ and assume that the process 
	$(\xi_n)_{n\in\Z}$ is stationary and ergodic with $\E (S_0)<\E (Z_0)$.
	Then $(W_n)_{n\in\N}$ is forward coupled with a stationary and ergodic $(W_n')_{n\in\Z}$ such that  $W_0'=\sup_{n\in\N} Y_n$, where
	$Y_0 = 0$ and $Y_n = \sum_{k=1}^n \xi_{-k}$, $n\ge 1$.
\end{theorem}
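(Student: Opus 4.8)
The plan is to carry out Loynes' classical construction of a stationary solution of the Lindley recursion and then to set up a monotone backward coupling with $(W_n)_{n\in\N}$. The starting point is the explicit solution of \eqref{eq:Lindley}: since $x\mapsto(x+a)_+=\max(0,x+a)$ is nondecreasing, iterating from $W_0=0$ gives, by a one-line induction based on $S^{(n+1)}_k=\xi_n+S^{(n)}_{k-1}$ for $k\ge1$,
$$
W_n=\max_{0\le k\le n}S^{(n)}_k,\qquad S^{(n)}_k:=\xi_{n-1}+\xi_{n-2}+\cdots+\xi_{n-k},\quad S^{(n)}_0:=0.
$$
In particular $W_n\ge S^{(n)}_0=0$ for all $n$.

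Next I would introduce the candidate stationary process $W'_n:=\sup_{k\ge0}S^{(n)}_k$. Since $(\xi_j)_{j\in\Z}$ is stationary and ergodic with $\E(\xi_0)=\E(S_0)-\E(Z_0)<0$, Birkhoff's ergodic theorem yields $k^{-1}S^{(n)}_k\to\E(\xi_0)<0$ almost surely, hence $S^{(n)}_k\to-\infty$ as $k\to\infty$, so the supremum defining $W'_n$ is attained and finite $\Pas$; for $n=0$ this reads $W'_0=\sup_{k\ge0}(\xi_{-1}+\cdots+\xi_{-k})=\sup_{n\in\N}Y_n$, exactly as in the statement. The same telescoping as above shows $W'_{n+1}=(W'_n+\xi_n)_+$, i.e. $(W'_n)$ solves the Lindley recursion driven by the same noise. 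Finally $W'_n=g\circ\theta^n$ with $g:=\sup_{k\ge0}Y_k$ measurable and $\theta$ the shift on $(\xi_j)_{j\in\Z}$ (with $\xi_j\circ\theta=\xi_{j+1}$), which is measure-preserving and ergodic by hypothesis; hence $\bigl((W'_n,\xi_n)\bigr)_{n\in\Z}$ is stationary, and, being a factor of an ergodic system, it is ergodic. This identifies $(W'_n)_{n\in\Z}$ as the stationary ergodic sequence claimed.

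The coupling step is the heart of the argument. As $W_n$ and $W'_n$ obey the same recursion and $0=W_0\le W'_0$, monotonicity of the recursion map gives $W_n\le W'_n$ for all $n$, and once $W_m=W'_m$ the two coincide at all later times. It remains to show they meet $\Pas$. Splitting the supremum in $W'_n$ at $k=n$ gives $W'_n=\max\bigl(W_n,\sup_{k>n}S^{(n)}_k\bigr)$, so $W'_n>W_n$ forces $\sup_{k>n}S^{(n)}_k>W_n\ge0$. The decisive identity is
$$
S^{(n)}_{n+j}=S^{(n)}_n+Y_j\quad(j\ge1),\qquad S^{(n)}_n=\xi_0+\cdots+\xi_{n-1},
$$
whence $\sup_{k>n}S^{(n)}_k=S^{(n)}_n+\sup_{j\ge1}Y_j\le(\xi_0+\cdots+\xi_{n-1})+W'_0$. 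By the ergodic theorem $\xi_0+\cdots+\xi_{n-1}\to-\infty$ $\Pas$, while $W'_0<\infty$ $\Pas$; hence $\Pas$ there is a finite $N$ such that $\sup_{k>n}S^{(n)}_k<0\le W_n$, and therefore $W_n=W'_n$, for every $n\ge N$. Setting $\tau:=\inf\{m\ge0:W_m=W'_m\}$ we get $\tau\le N<\infty$ $\Pas$ and $W_n=W'_n$ for $n\ge\tau$, which is the asserted forward coupling.

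I expect the main obstacle to be precisely this last step: recognizing that the "overshoot part" $\sup_{k>n}S^{(n)}_k$ of the finite-horizon Loynes maximum at time $n$ decomposes as the random-walk value $S^{(n)}_n=\xi_0+\cdots+\xi_{n-1}$ plus the full stationary Loynes variable $\sup_{j\ge1}Y_j$, and then combining the transience $S^{(n)}_n\to-\infty$ (which is exactly where the subcriticality $\E(S_0)<\E(Z_0)$ enters) with the $\Pas$ finiteness of $W'_0$. The remaining ingredients — solving \eqref{eq:Lindley} explicitly, transferring stationarity and ergodicity through the shift, and the ordering $W_n\le W'_n$ — are routine; the only point needing mild care is that Birkhoff's theorem is applied in the generality $\E(\xi_0)\in[-\infty,0)$ (allowing $\E(\xi_0^-)=\infty$), which still gives $S^{(n)}_n\to-\infty$ and $Y_j\to-\infty$ $\Pas$.
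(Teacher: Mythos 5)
Your argument is correct: it is precisely the classical Loynes backward construction combined with the Györfi--Morvai coupling step, which is the proof of the result the paper only cites from \cite{gyorfi2002} without reproducing it. The explicit finite-horizon maximum for $W_n$, the stationary majorant $W_n'=\sup_{k\ge 0}S^{(n)}_k$, the decomposition $\sup_{k>n}S^{(n)}_k=S^{(n)}_n+\sup_{j\ge 1}Y_j$, and the use of Birkhoff's theorem (valid also when $\E(\xi_0)=-\infty$) to force this overshoot below $0\le W_n$ are all sound, so no gaps remain.
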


While the aforementioned theorems ensure that the distribution of waiting times converges to a well-defined limiting law, they regrettably do not furnish any insights into the properties of this stationary limit, nor do they shed light on the speed of convergence. When addressing the latter question, the only available rate estimate that comes to our aid is encapsulated in an inequality found in Theorem 4 on page 25 of \cite{bborovkov72}, which is expressed as:
\begin{equation}\label{eq:borovkov}
\lVert \law{W_n}-\law{W_0'}\rVert_{TV}\le \P \left(\min_{0<k<n} X_k>\max (W_1,W_0'+\xi_0)\right),
\end{equation}
where $\xi_n$ is as in Theorem \ref{thm:gyorfi}, and $(X_n)_{n\in\N}$ is defined as $X_0 = 0$, $X_n = \sum_{k=1}^{n} \xi_k$, $n\ge 1$. However, it's worth noting that the primary limitation of this formula lies in its practical applicability. Evaluating the probability on the right-hand side of this equation can be a formidable task, rendering it impractical as a concrete and readily usable rate estimate. 

\smallskip
If the inter-arrival times are i.i.d. and the sequences $S$ and $Z$ are independent of each other, then the process $W$ can be viewed as a Markov chain in the random environment $S$, with driving noise $Z$. Conversely, if the service times are i.i.d., and again $S$ and $Z$ are independent, $W$ can be regarded as a Markov chain in the random environment $Z$, with driving noise $S$. Therefore, both of these special cases of queuing systems fall within the theoretical framework outlined in Section \ref{sec:MCRE}.
In \cite{lovas2021ergodic,lovas}, we analyzed such queuing models with an additional G\"artner-Ellis-type condition (see Assumption \ref{as:queue:GE} below or Assumption 4.2 in \cite{lovas}), which is a well-established practice in queuing theory. For instance, in Section 3 of \cite{gyorfi2002}, similar conditions are employed to investigate the exponential tail behavior of the limit distribution of queue length when arrivals exhibit weak dependence. Further justification for the applicability of G\"artner-Ellis-type conditions can be found in Remark 4.3 of \cite{lovas}.

\smallskip
In what follows, we revisit the queuing model studied in \cite{lovas}. Specifically, we consider the case where $S$ and $Z$ are independent, and the latter is an i.i.d. sequence. The reverse case discussed in \cite{lovas2021ergodic} can be treated analogously. We begin by examining the scenario where the sequence of service times is strictly stationary. Regarding this case, we formulate our standing assumptions.
\begin{assumption}\label{as:queue:proc}
	There exists an $M > 0$ such that the sequence of service times $(S_n)_{n\in\Z}$ is a strictly stationary process taking values in $[0,M]$. Furthermore, $(S_n)_{n\in\Z}$ is independent of the sequence $(Z_n)_{n\in\Z}$.
\end{assumption}

\begin{assumption}\label{as:queue:stab}
	The inter-arrival times $(Z_n)_{n\in\Z}$ form an i.i.d. sequence of $\R_{+}$-valued random variables, and $\E[S_0] < \E[Z_1]$ holds.
\end{assumption}

\begin{assumption}\label{as:queue:GE}
There exists $\eta > 0$ such that for all $t \in (-\eta, \eta)$, the limit
		\begin{equation*}
			\Gamma(t) := \lim_{n\to\infty} \frac{1}{n} \log \E e^{t(S_1 + \ldots + S_n)}
		\end{equation*}
		exists and $\Gamma$ is differentiable on $(-\eta, \eta)$.
\end{assumption}

Under Assumptions \ref{as:queue:proc}, \ref{as:queue:stab}, and \ref{as:queue:GE}, by Lemma 4.4 of \cite{lovas}, the sequence of waiting times $(W_n)_{n\in\N}$, defined by the Lindley recursion \eqref{eq:Lindley}, forms a Markov chain in the random environment $(S_n)_{n\in\Z}$ on the state space $\X = \R_{+}$ with parametric kernel
\begin{equation*}
	Q(s, w, A) := \P \left[\left(w + s - Z_1\right)_+ \in A\right], \,
	s \in [0, M], w \in \R_{+}, A \in \B\left(\R_{+}\right).
\end{equation*}
Furthermore, there exists $\bar{t} > 0$ such that, with the following coefficients:
\begin{align*}
	V(w)    &:= e^{\bar{t}w} - 1, \quad w \ge 0,\\	
	\gamma(s) &:= \E\left[e^{\bar{t}(s - Z_1)}\right], \quad s \ge 0,\\
	K       &:= e^{\bar{t}M},
\end{align*}
the drift condition \eqref{eq:Lyapunov} and the long-term contractivity condition \eqref{eq:LT} are satisfied, meaning
$$
[Q(s)V](w) \le \gamma(s)V(w) + K,
$$
and
\begin{equation*}
	\bar{\gamma} := \limsup_{n \to \infty} \left( \E^{1/n} \left[K \prod_{k=1}^{n} \gamma(S_k)\right] \right) < 1.
\end{equation*}

We now introduce an additional assumption on the inter-arrival times, which will be required to establish the minorization condition.
\begin{assumption}\label{as:queue:minor}
	One has $\P \left(Z_0\ge\tau\right)>0$ for $$
	\tau:=M+\frac{4}{\frac{1}{\bar{\ga}^{1/2}}-1}.
	$$
\end{assumption}	
This does not impose a significant restriction, since if $Z_0$ is an unbounded random variable, Assumption \ref{as:queue:minor} is automatically satisfied.

By Lemma 4.6 of \cite{lovas}, under Assumptions \ref{as:queue:proc}, \ref{as:queue:stab}, \ref{as:queue:GE}, and \ref{as:queue:minor}, it can be shown that
there exists $\bar{\beta} \in (0,1)$ such that, for all $s \in [0,M]$, $A \in \B\left(\R_{+}\right)$, and $w \in \stackrel{-1}{V}([0,R(s)])$,
\begin{equation}\label{eq:old_minor}
Q(s, w, A) \ge (1 - \bar{\beta}) \delta_0 (A), \,
\text{where } R(s) = \frac{2 K(s)}{\epsilon \gamma(s)},
\end{equation}
$\epsilon := \left(\frac{1}{\bar{\gamma}^{1/2}} - 1\right)/2$, and $\delta_0$ is the one-point mass concentrated at 0.

\smallskip
The main result of Chapter 4, which discusses queuing theory applications in \cite{lovas}, is Theorem 4.7. It states that under Assumptions \ref{as:queue:proc}, \ref{as:queue:stab}, \ref{as:queue:GE}, and \ref{as:queue:minor}, there exists a probability measure $\mu_{*}$ on $\mathcal{B}(\R_{+})$, independent of the initial length of the queue, such that
\begin{equation}\label{eq:old_res}
	\lVert \law{W_n} - \mu_{*} \rVert_{TV} \leq c_1 e^{-c_2 n^{1/3}},
\end{equation}
for some $c_1, c_2 > 0$. Furthermore, if $\left(S_n\right)_{n\in\Z}$ is ergodic, then for an arbitrary measurable and bounded function $\Phi: \R_{+} \to \R$,
\begin{equation}\label{torta2}
	\frac{\Phi(W_0) + \ldots + \Phi(W_{n-1})}{n} \to \int_{\R_{+}} \Phi(z) \mu_{*}(\dint z),
\end{equation}
in $L^p$, for all $1 \le p < \infty$. 

Using Theorem \ref{thm:stac_forward_coupling} and Corollary \ref{cor_fwd_dtv}, we can prove even more. In the queuing model we study, for instance, the sequence of waiting times $(W_n)_{n\in\N}$ is forward coupled with a strictly stationary sequence $(W_n^*)_{n\in\N}$. This does not require assuming the ergodicity of the service time sequence, and we obtain a tractable upper bound on the tail probability of the random coupling time. Finally, we demonstrate a faster convergence rate than that presented in the estimate \eqref{eq:old_res}. The following theorem addresses this result.
\begin{theorem}\label{thm:queue:stac}
	Let Assumptions \ref{as:queue:proc}, \ref{as:queue:stab}, \ref{as:queue:GE}, and \ref{as:queue:minor} be in force. Then there exists a stationary process $(S_n, W_n^*)_{n\in\Z}$ that satisfies the Lindley recursion \eqref{eq:Lindley}. Moreover, appropriate versions of the processes $(W_n)_{n\in\N}$ and $(W_n^*)_{n\in\N}$ are forward coupled. For the tail probability of the random coupling time $\tau$, we have the bound
	$$
	\P(\tau > n) \leq c_1 e^{-c_2 n^{1/2}},
	$$
	for suitable constants $c_1, c_2 > 0$, depending only on the quantities $\bar{\beta}$, $\bar{\gamma}$, and $\epsilon$. Furthermore, for the total variation distance of $\law{W_n}$ and $\law{W_n^*}$ we have the following estimate:
	$$
	\lVert \law{W_n} - \law{W_n^\ast} \rVert_{TV}
	\leq
	2 c_1 e^{-c_2 n^{1/2}},
	$$
	with the same constants $c_1$ and $c_2$.
\end{theorem}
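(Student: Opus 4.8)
The plan is to observe that, under Assumptions \ref{as:queue:proc}--\ref{as:queue:minor}, the waiting-time sequence $(W_n)_{n\in\N}$ is exactly a Markov chain in the strongly stationary random environment $(S_n)_{n\in\Z}$ to which Theorem \ref{thm:stac_forward_coupling} and Corollary \ref{cor_fwd_dtv} apply. Once Assumption \ref{as:dynamics} is verified in this concrete setting and the trivial initial condition $W_0=0$ is tracked through the constants, all three assertions of the theorem follow immediately.

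First I would record that, by Lemma 4.4 of \cite{lovas}, $(W_n)_{n\in\N}$ is an MCRE on $\X=\R_{+}$ with the stated parametric kernel $Q(s,w,A)$ and environment $(S_n)_{n\in\Z}$, which is strongly stationary and independent of the driving noise $(Z_n)_{n\in\Z}$. With $V(w)=e^{\bar t w}-1$, $\gamma(s)=\E[e^{\bar t(s-Z_1)}]$ and the constant $K=e^{\bar t M}\ge 1$, the drift condition \eqref{eq:Lyapunov} holds, and $\bar\gamma=\limsup_n \E^{1/n}[K\prod_{k=1}^{n}\gamma(S_k)]<1$ is precisely the quantity in Assumption \ref{as:dynamics}(A); the supremum over $j\ge -1$ there is harmless since $(S_n)$ is stationary and $K$ is a constant (the case $j=-1$, where $K(S_{-1}):=1$, only decreases the expectation), so $\bar\gamma<1$ suffices. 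For the minorization I would invoke Lemma 4.6 of \cite{lovas} in the form \eqref{eq:old_minor}: with $\epsilon=(\bar\gamma^{-1/2}-1)/2$ it gives $Q(s,w,A)\ge (1-\bar\beta)\delta_0(A)$ for $w\in V^{-1}([0,R(s)])$, $R(s)=2K(s)/(\epsilon\gamma(s))$, and $\bar\beta\in(0,1)$. This is Assumption \ref{as:dynamics}(B) with $r=\epsilon$ and $\kappa_R(s,\cdot)=\delta_0$, provided $0<\epsilon<\bar\gamma^{-1}-1$, which is immediate since $\bar\gamma^{-1}-1=(\bar\gamma^{-1/2}-1)(\bar\gamma^{-1/2}+1)>2(\bar\gamma^{-1/2}-1)=4\epsilon$; Assumption \ref{as:queue:minor} is exactly what makes \eqref{eq:old_minor} available, as in \cite{lovas}.

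With Assumption \ref{as:dynamics} in force and $W_0=0$ deterministic, hence independent of $\sigma(S_m,\eps_{n+1}\mid m\in\Z,\,n\in\N)$ with $\E(V(W_0))=V(0)=0$, Theorem \ref{thm:stac_forward_coupling} yields directly: a stationary process $(S_n,W_n^\ast)_{n\in\Z}$ satisfying the Lindley recursion \eqref{eq:Lindley}, forward coupling of suitable versions of $(W_n)_{n\in\N}$ and $(W_n^\ast)_{n\in\N}$, and the bound $\P(\tau>n)\le c_1(1+\E(V(W_0)))e^{-c_2 n^{1/2}}=c_1 e^{-c_2 n^{1/2}}$ with $c_1,c_2>0$ depending only on $\bar\beta,\bar\gamma$ and $r=\epsilon$. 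The total variation estimate is then the specialization of Corollary \ref{cor_fwd_dtv}, giving $\lVert\law{W_n}-\law{W_n^\ast}\rVert_{TV}\le 2c_1(1+\E(V(W_0)))e^{-c_2 n^{1/2}}=2c_1 e^{-c_2 n^{1/2}}$ with the same constants.

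The only genuinely delicate point is bookkeeping rather than a new idea: one must check that the $\epsilon$-parametrized minorization of \cite{lovas} really coincides with the $r$-parametrization of Assumption \ref{as:dynamics}(B) (so that $r=\epsilon$ is admissible and the constants indeed depend only on $\bar\beta,\bar\gamma,\epsilon$), and that the $n^{1/2}$-rate obtained here is consistent with — and sharper than — the $n^{1/3}$-rate of \eqref{eq:old_res}, the improvement stemming from the coupling construction behind Theorem \ref{thm:stac_forward_coupling} (detailed in Appendix \ref{ap:CouplingCondition}) rather than from the minorization itself. I expect no further obstacle.
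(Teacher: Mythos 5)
Your proposal is correct and follows essentially the same route as the paper's proof: verify Assumption \ref{as:dynamics} via the drift and minorization results of \cite{lovas} (noting stationarity collapses part A) to the long-term contractivity condition \eqref{eq:LT}), then apply Theorem \ref{thm:stac_forward_coupling} and Corollary \ref{cor_fwd_dtv} with $\E(V(W_0))=V(0)=0$. Your explicit check that $\epsilon=(\bar\gamma^{-1/2}-1)/2<1/\bar\gamma-1$, so that $r=\epsilon$ is admissible in Assumption \ref{as:dynamics}(B), is a detail the paper leaves implicit but is correctly handled.
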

\begin{proof}
	The sequence of service times $(S_n)_{n\in\N}$ is strictly stationary, so Assumption \ref{as:dynamics} A) simplifies to the \eqref{eq:LT} long-term contractivity condition used in our earlier paper \cite{lovas}. We have already shown above that this holds under the conditions of the theorem. From the minorization condition \eqref{eq:old_minor}, part B) of Assumption \ref{as:dynamics} follows. Thus, we can apply Theorem \ref{thm:stac_forward_coupling}, which states that there exists a stationary process $(S_n, W_n^*)_{n\in\Z}$ that satisfies the Lindley recursion \eqref{eq:Lindley}. Moreover, appropriate versions of the processes $(W_n)_{n\in\N}$ and $(W_n^*)_{n\in\N}$ are forward coupled. Considering the specific form of the Lyapunov function in the drift condition, $V(w) = e^{\bar{t}w} - 1$, and the initial condition $W_0 = 0$, we obtain
	$$
	\E (V(W_0)) = 0.
	$$
	Therefore, for the tail probability of the random coupling time $\tau$, we have
	$$
	\P(\tau > n) \leq c_1 (1 + \E (V(W_0))) e^{-c_2 n^{1/2}} = c_1 e^{-c_2 n^{1/2}},
	$$
	as stated. Finally, the estimate for the total variation distance between $\law{W_n}$ and $\law{W_n^*}$ immediately follows from Corollary \ref{cor_fwd_dtv}.
\end{proof}

In the remaining part of this section, we relax the assumption that the sequence of service times $(S_n)_{n\in\Z}$ is stationary. We only assume that it is a sequence of weakly dependent variables with sufficiently favorable mixing properties. Accordingly, our assumptions will be as follows.

\begin{assumption}\label{as:queue:Z} % Z is i.i.d. independent of S
	We assume that the inter-arrival time sequence $(Z_n)_{n \geq 1}$ is i.i.d. and independent of the service time sequence $(S_n)_{n \in \mathbb{N}}$.
	Moreover, we also assume that $\mathbb{E}(Z_0^2) < \infty$.
\end{assumption}

\begin{assumption}\label{as:queue:S} % Bounded
	We assume that the service time sequence $(S_n)_{n \in \mathbb{N}}$ takes values in $\mathcal{Y} = [0, M]$. 
	For $t\ge 0$, we define the function
	\[
	\Lambda(t) = \limsup_{n \to \infty} \sup_{j \in \mathbb{N}} 
	\frac{1}{n} \log \mathbb{E} \left[ \exp \left( t \sum_{k=0}^n (S_{k+j} - Z_{k+j+1}) \right) \right].
	\]
	Assume that there exists a parameter $\bar{t} > 0$ such that $\Lambda(\bar{t}) < 0$.
\end{assumption}

\begin{remark}\label{rem:munder}
	In Lemma 4.4 of \cite{lovas}, under Assumption \ref{as:queue:GE}, we showed that there exists a parameter $\bar{t} > 0$ such that 
	in Assumption \ref{as:queue:S}, $\Lambda(\bar{t}) < 0$. Although such a condition is quite standard in both queuing theory and large deviation theory, verifying the differentiability of the function $t \mapsto \Gamma(t)$ in Assumption \ref{as:queue:GE} can be challenging in practice. 
	Moreover, the proof of Lemma 4.4 in \cite{lovas} does not generalize to the case of a non-stationary sequence $(S_n)_{n \in \mathbb{N}}$, since the definition of the function $\Lambda$ includes a supremum, which prevents the application of convexity-based arguments.
	
	For fixed $n, j \in \mathbb{N}$, define
	\[
	\lambda_{n,j}(t) = \frac{1}{n} \log \mathbb{E} \left[ \exp \left( t \sum_{k=0}^n (S_{k+j} - Z_{k+j+1}) \right) \right].
	\]
	If $\limsup_{n \to \infty} \mathbb{E}(S_n) < \mathbb{E}(Z_1)$, it can be shown that for sufficiently large $n$, $\lambda_{n,j}'(0) < 0$. Additionally, if the mixing coefficients $\alpha^S(n)$ decay sufficiently fast to zero, it can be established that $\lambda_{n,j}''(0) < \infty$. However, this alone does not ensure the existence of a parameter $\bar{t} > 0$ such that $\Lambda(\bar{t}) < 0$. 
\end{remark}

The goal of the next proposition is to provide a sufficient condition under which the otherwise difficult-to-verify requirement on $\Lambda(t)$ in Assumption~\ref{as:queue:S} holds. This condition relies on a mixing property stronger than $\alpha$-mixing, known as $\psi$-mixing. In analogy with the dependence measure $\alpha(\mathcal{G},\mathcal{H})$ introduced in Section~\ref{sec:mixtrans}, we define
\begin{equation}\label{eq:dep_psi}
	\psi (\G,\HH) = \sup \left\{
	\left|\frac{\P (G\cap H)}{\P (G) \P (H)} -1\right|\middle|
	\P (G)>0,\,\P(\HH)>0,\,G\in\G,\,H\in\HH
	\right\}.
\end{equation}
Furthermore, for an arbitrary sequence of random variables $(W_t)_{t\in\Z}$, 
the sequence of $\psi$-mixing coefficients is defined as
$$
\psi^W (n)=\sup_{j\in\Z}\psi (\F_{-\infty,j}^W,\F_{j+n,\infty}^W),
\quad n\in\N.
$$ 
We say that the process $(W_n)_{n\in\Z}$ is $\psi$-mixing if $\psi^W(n)\to 0$ as $n\to\infty$.

It is straightforward to show that every $\psi$-mixing process is also $\alpha$-mixing, but the converse does not hold. Moreover, $\psi$-mixing is one of the strongest forms of strong mixing conditions, implying several others such as $\alpha$-, $\beta$-, $\phi$-, and $\rho$-mixing. Nevertheless, the class of $\psi$-mixing processes remains of considerable interest. For instance, in the case of strictly stationary, finite-state Markov chains, the notions of $\alpha$- and $\psi$-mixing coincide (See Theorem 3.1 in \cite{Bradley2005}).

To formulate the next proposition, we need the notion of stochastic ordering. A real-valued random variable $W_1$ is said to be stochastically smaller than another random variable $W_2$, denoted by $W_1 \preceq W_2$, if
$$
\P (W_1>x)\le \P (W_2>x),\quad x\in\R,
$$
or, equivalently,
$$
\E [u (W_1)]\le \E[u (W_2)],
$$
for every monotonically increasing function $u: \R\to\R$.
\begin{proposition}\label{prop:psi_mix}
	Let the sequence of inter-arrival times $(Z_n)_{n \ge 1}$ be i.i.d. and independent of the service time sequence $(S_n)_{n \in \mathbb{N}}$, which is assumed to be $\psi$-mixing and takes values in $\mathcal{Y} = [0, M]$. Suppose furthermore that there exists a random variable $S^*$ such that $\E [S^*]<\E [Z_1]$, moreover
	$S_n \preceq S^*$ for all $n \in \N$.
	
	Then there exists $\bar{t} > 0$ such that 
	$$
	\Lambda(\bar{t}) < 0,
	$$ 
	where $\Lambda(t)$ is the rate function defined in Assumption~\ref{as:queue:S}.
\end{proposition}
\begin{proof}
	Let $j\in\N$ and $n\ge 1$ be arbitrary but fixed, and define
	$W_k = e^{t (S_{k-1}-Z_{k})}$, $k=1,\ldots,n$. Our goal is to estimate $\E [W_{j+1}\cdots W_{j+n}]$. For integers $p,q$ satisfying $pq\le n < p (q+1)$, which allows us to write
	$$
	W_{j+1}\cdots W_{j+n} =\eta_1\cdots \eta_p \Delta_n,
	$$
	where $\eta_k= \prod_{l=0}^{q-1} W_{j+k+lp}$, $k=1,\ldots, p$, and $\Delta_n=W_{j+pq+1}\cdots W_{j+n}$.
	
	Using $\Delta_n\le e^{tM (p-1)}$ and H\"older-inequality, we have
	\begin{equation}\label{eq:est16}
	\E [W_{j+1}\cdots W_{j+n}]\le e^{tM (p-1)}\prod_{k=1}^{p}\E^{1/p}[\eta_k^p].
	\end{equation}
	For any fixed $k$, by independence, we have 
	$$
	\E [\eta_k^p]=\E\left[\prod_{l=0}^{q-1}e^{tp (S_{j+k-1+lp}-Z_{j+k+lp})}\right]=\E\left[e^{-tpZ_1}\right]^q\E\left[\prod_{l=0}^{q-1}e^{tp S_{j+k-1+lp}}\right].
	$$
	Using layer cake representation, and that $S_n\preceq S^*$, for $n\in\N$, we can write
	\begin{align*}
		\E\left[\prod_{l=0}^{q-1}e^{tp S_{j+k-1+lp}}\right]
		&=
		\int_{[0,\infty)^q}\P (e^{tp S_{j+k-1+lp}}\ge \tau_l,\,l=0,\ldots,q-1)
		\,\dint \tau_0\ldots\dint \tau_{q-1}
		\\
		&\le
		 (1+\psi^S (p))^{q-1} \prod_{l=0}^{q-1}
		 \int_{[0,\infty)}\P (e^{tp S_{j+k-1+lp}}\ge \tau_l)
		 \,\dint \tau_l
		 \\
		&=
		(1+\psi^S (p))^{q-1} \prod_{l=0}^{q-1} \E\left[e^{tp S_{j+k-1+lp}}\right]\le (1+\psi^S (p))^{q-1}\E\left[e^{tp S^*}\right]^q.
	\end{align*}
	To sum up, and taking the supremum in $j$, we obtain the following estimate:
	\begin{equation}\label{eq:est17}
		\sup_{j\in\N}\E [W_{j+1}\cdots W_{j+n}]\le e^{tM (p-1)} (1+\psi^S (p))^{q-1}\E [e^{tp (S^*-Z_1)}]^q.
	\end{equation}
	
	Since $\left.\frac{\dint}{\dint h}\E\left[e^{h(S^*-Z_1)}\right]\right|_{t=0}= \E [S^*-Z_1]<0$, for arbitrary $m$ satisfying $\E [S^*-Z_1]<m<0$ there exists
	$h_0>0$ such that
	$$
	\E \left[e^{h(S^*-Z_1)}\right]<1+mh,\quad 0<h<h_0.
	$$
	
	Let $h_0'\in (0,h_0)$ be fixed. By the $\psi$-mixing property of the sequence $(S_n)_{n\in\N}$, there exists $p_0\ge 1$ such that 
	$$
	\zeta:=(1+\psi^S (p_0))(1+m h_0')<1.
	$$
	Using \eqref{eq:est17} with $t=\bar{t}:=h_0'/p_0$ and $q_n=\lfrf{\frac{n}{p_0}}$, $n\ge 1$, we obtain
	$$
	\sup_{j\in\N}\E [W_{j+1}\cdots W_{j+n}]\le e^{h_0' M}\zeta^{q_n}\le  e^{h_0' M}\zeta^{-1}\zeta^{n/p_0},
	$$
	and thus
	$$
	\sup_{j \in \mathbb{N}} 
	\frac{1}{n} \log \mathbb{E} \left[ \exp \left( \bar{t} \sum_{k=0}^n (S_{k+j} - Z_{k+j+1}) \right) \right]
	\le \frac{n+1}{n p_0}\log \zeta + \frac{1}{n}\log (e^{h_0' M}\zeta^{-1})
	$$
	which immediately implies that
	$$
	\Lambda (\bar{t})=\limsup_{n\to\infty}\sup_{j \in \mathbb{N}} 
	\frac{1}{n} \log \mathbb{E} \left[ \exp \left( \bar{t} \sum_{k=0}^n (S_{k+j} - Z_{k+j+1}) \right) \right]\le \frac{1}{p_0}\log \zeta<0.
	$$
\end{proof}

It is conceivable that the requirement on $\Lambda(t)$ in Assumption~\ref{as:queue:S} could be verified under mixing conditions on the sequence $(S_n)_{n\in\N}$ that are weaker than $\psi$-mixing. In particular, \cite{Bryc1996LDP} addresses large deviation theorems under strong mixing. The authors showed that for stationary processes with a hyper-exponential mixing rate—specifically, $\alpha(n) \ll \exp(-n (\log n)^{1+\delta})$ for some $\delta > 0$—the existence of a limit analogous to the one in the definition of the rate function $\Lambda$ is ensured (see Theorem 1 in \cite{Bryc1996LDP}). However, they also provided counterexamples based on Doeblin recurrent, irreducible Markov chains with countable state spaces to demonstrate that this decay condition on the mixing coefficient cannot be significantly weakened.

\begin{lemma}\label{lem:queue:driftLT}
	Under Assumption \ref{as:queue:Z}, the sequence of waiting times $(W_n)_{n\in\N}$, defined by the Lindley recursion \eqref{eq:Lindley}, forms a Markov chain in the random environment $(S_n)_{n\in\Z}$ on the state space $\X = \R_{+}$ with the parametric kernel
	\begin{equation*}
		Q(s, w, A) := \P \left[\left(w + s - Z_1\right)_+ \in A\right], \,\,
		s , w \in \R_{+},\,\, A \in \B\left(\R_{+}\right).
	\end{equation*}
	For the Lyapunov function $V(w) = e^{t w}-1$, $w\in\R_{+}$, the drift condition holds for any choice of $t>0$:
	$$
	[Q(s)V](w)\le \gamma (s) V(w)+K(s)
	$$
	with $\gamma (s) = K(s) = \E\left[e^{t(s-Z_1)}\right]$, $s \ge 0$.
\end{lemma}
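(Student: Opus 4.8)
The plan is to check the two assertions of the lemma separately: the Markov-chain-in-random-environment structure with the claimed transition kernels, and then the drift inequality for $V(w)=e^{tw}-1$.

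\emph{MCRE structure.} Under Assumption~\ref{as:queue:Z} the sequence $(Z_n)_{n\ge 1}$ is i.i.d.\ and independent of $(S_n)_{n\in\N}$, so a routine $\pi$-system argument gives that, for each $n\in\N$, $Z_{n+1}$ is independent of $\sigma\big((S_k)_k\big)\vee\sigma(Z_1,\dots,Z_n)$. Unwinding \eqref{eq:Lindley} inductively shows that $W_j$ is a measurable function of $S_0,\dots,S_{j-1},Z_1,\dots,Z_j$ for each $j\le n$ (recall $W_0=0$), hence $Z_{n+1}$ is independent of $\mathcal{G}_n:=\sigma\big((W_j)_{0\le j\le n}\big)\vee\sigma\big((S_k)_k\big)$. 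Writing $g(w,s,z):=(w+s-z)_+$, so that $W_{n+1}=g(W_n,S_n,Z_{n+1})$, the substitution lemma for conditional expectations (applicable since $(W_n,S_n)$ is $\mathcal{G}_n$-measurable and $Z_{n+1}\perp\mathcal{G}_n$) yields
\begin{equation*}
\P\big(W_{n+1}\in A\,\big|\,\mathcal{G}_n\big)=\psi(W_n,S_n),\qquad\text{where}\qquad \psi(w,s):=\P\big[(w+s-Z_1)_+\in A\big]=Q(s,w,A),
\end{equation*}
which is exactly the MCRE characterization \eqref{eq:iter_con_ex}. That $Q$ is a genuine parametric probabilistic kernel is immediate: for fixed $(s,w)$ it is the pushforward of $\law{Z_1}$ under $z\mapsto(w+s-z)_+$, and measurability in $(s,w)$ for fixed $A$ follows from Tonelli applied to the jointly measurable map $(s,w,z)\mapsto\ind\{g(w,s,z)\in A\}$. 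To connect with the iteration formalism \eqref{eq:iter} used by the downstream results, I would finally note that, with $F$ the c.d.f.\ of $Z_1$ and $F^{-1}$ its generalized inverse, the choice $f(w,s,z):=(w+s-F^{-1}(z))_+$ satisfies $Q(s,w,A)=\int_{[0,1]}\ind\{f(w,s,z)\in A\}\,\dint z$, so $(W_n)_{n\in\N}$ admits a representation of the form \eqref{eq:iter} driven by i.i.d.\ $[0,1]$-uniform noise.

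\emph{Drift inequality.} Fix $t>0$. The only analytic ingredient is the elementary bound $e^{ta_+}\le e^{ta}+1$, valid for every $a\in\R$ (treat $a\ge 0$ and $a<0$ separately), which rearranges to $e^{ta_+}-1\le e^{ta}$. Taking $a=w+s-Z_1$ and integrating against $\law{Z_1}$,
\begin{equation*}
[Q(s)V](w)=\E\big[e^{t(w+s-Z_1)_+}-1\big]\le\E\big[e^{t(w+s-Z_1)}\big]=e^{tw}\,\E\big[e^{t(s-Z_1)}\big].
\end{equation*}
Setting $\gamma(s)=K(s):=\E[e^{t(s-Z_1)}]=e^{ts}\,\E[e^{-tZ_1}]$ — which is finite and strictly positive, since $t>0$ and $Z_1\ge0$ force $0<\E[e^{-tZ_1}]\le 1$, and which is clearly measurable in $s$ — and using $e^{tw}=V(w)+1$, the right-hand side equals $\gamma(s)\big(V(w)+1\big)=\gamma(s)V(w)+K(s)$, i.e.\ the drift condition \eqref{eq:Lyapunov}.

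\emph{Main obstacle.} I do not expect any real difficulty: the content is essentially bookkeeping. The step requiring the most care is the conditioning argument in the first part — making fully precise that the fresh noise $Z_{n+1}$ is independent of the \emph{joined} $\sigma$-algebra $\sigma((W_j)_{j\le n})\vee\sigma((S_k)_k)$, so that one genuinely obtains \eqref{eq:iter_con_ex} and not merely the pathwise recursion; everything else (the Tonelli measurability check for $Q$, the finiteness of $\gamma$, and the exponential inequality) is routine.
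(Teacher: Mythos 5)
Your proof is correct and follows essentially the same route as the paper's: the MCRE structure is obtained from the independence granted by Assumption \ref{as:queue:Z} (your quantile-transform representation and conditioning argument just make explicit what the paper states more briefly), and the drift bound rests on the same elementary inequality $e^{tx_+}-1\le e^{tx}$ followed by the identical factorization $\E[e^{t(w+s-Z_1)}]=\gamma(s)V(w)+K(s)$.
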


\begin{proof}

The sequence of waiting times $(W_n)_{n\in\N}$, governed by the Lindley recursion \eqref{eq:Lindley}, constitutes a non-linear autoregressive process of the form \eqref{eq:iter}. In this framework, $(S_n)_{n\in\N}$ represents the sequence of exogenous covariates, while $(Z_n)_{n\in\N}$ plays the role of the noise process $(\eps_n)_{n\in\N}$.
Furthermore, by  Assumption \ref{as:queue:Z}, $(Z_n)_{n\in\N}$ is i.i.d. and independent of $(S_n)_{n\in\N}$, and thus the identity in the form \eqref{eq:iter_con_ex} holds.
Consequently, $(W_n)_{n\in\N}$ can be interpreted as a Markov chain in a random environment defined by $(S_n)_{n\in\N}$, with the corresponding parametric kernel given by
$$
Q(s,w,A) = \P((s+w-Z_1) \in A)_+, \quad s,w \in \R_{+},\, A \in \B(\R_{+}).
$$

Let $t > 0$ be arbitrary and define $V(w) = e^{tw} - 1$.  
Using the inequality $e^{(x)_+} - 1 \leq e^x$ for all $x \in \mathbb{R}$, we obtain the drift condition:
\begin{align*}
	[Q(s)V](w) &= \mathbb{E}\left[V((w+s-Z_1)_+)\right] = \mathbb{E}\left[e^{t(w+s-Z_1)_+}-1\right] \\
	&\leq \mathbb{E}\left[e^{t(w+s-Z_1)}\right] = \mathbb{E}\left[e^{t(s-Z_1)}\right] \left(e^{tw}-1\right) + \mathbb{E}\left[e^{t(s-Z_1)}\right]
	\\
	&=\gamma (s)V(w)+K(s),
\end{align*}
where $\gamma(s) = K(s) = \mathbb{E}\left[e^{t(s-Z_1)}\right]$, $s \geq 0$ which completes the proof.
\end{proof}

The following lemma, which ensures the fulfillment of part B) of Assumption \ref{as:dynamics}, essentially coincides with Lemma 4.6 in \cite{lovas}. 
We include the proof here solely for the sake of completeness. 
\begin{lemma}\label{lem:q:tech}
	Assume that $\P (Z_1\ge M+\tau)>0$ holds for some $\tau>0$. Then, there exists $\bar{\beta} \in (0,1)$ such that, for all $s \in [0,M]$ and $w \in [0,\tau]$,
	$$
	Q(s,w,A) \geq (1 - \bar{\beta}) \delta_0(A), \,\, A \in \mathcal{B}(\mathbb{R}_{+}),
	$$
	where $\delta_0$ denotes the Dirac measure concentrated at $0$.
\end{lemma}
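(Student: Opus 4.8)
The plan is to read off the claim directly from the explicit form of the kernel $Q(s,w,\cdot)=\law{(w+s-Z_1)_+}$, using the elementary fact that $(w+s-Z_1)_+=0$ precisely when $Z_1\ge w+s$. The point is that, as $s$ ranges over $[0,M]$ and $w$ over $[0,\tau]$, the threshold $w+s$ stays below $M+\tau$, so the kernel puts a mass on the single atom $\{0\}$ that is bounded below uniformly by the positive number $\P(Z_1\ge M+\tau)$.

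First I would record the pointwise mass at $0$: for every $s\in[0,M]$ and $w\in[0,\tau]$ we have $w+s\le M+\tau$, hence $\{Z_1\ge M+\tau\}\subseteq\{Z_1\ge w+s\}$, and therefore
\[
Q(s,w,\{0\})=\P\big((w+s-Z_1)_+=0\big)=\P(Z_1\ge w+s)\ge\P(Z_1\ge M+\tau)=:p,
\]
where $p>0$ by hypothesis. Next, for an arbitrary $A\in\B(\R_{+})$, monotonicity of the (nonnegative) measure $Q(s,w,\cdot)$ applied to $A\cap\{0\}\subseteq A$ gives $Q(s,w,A)\ge Q(s,w,A\cap\{0\})=\delta_0(A)\,Q(s,w,\{0\})\ge p\,\delta_0(A)$, uniformly in $s\in[0,M]$ and $w\in[0,\tau]$. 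Choosing any $\bar{\beta}\in(0,1)$ with $1-\bar{\beta}\le p$ — which is possible since $p\in(0,1]$ — yields $Q(s,w,A)\ge(1-\bar{\beta})\delta_0(A)$, as claimed.

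The only subtlety is the degenerate case $p=1$, for which $1-p=0$ lies outside $(0,1)$; but the minorization inequality only gets weaker when $1-\bar{\beta}$ is decreased, so in that case one simply picks, say, $\bar{\beta}=1/2$. I do not expect any genuine obstacle here: the lemma is an immediate structural consequence of the Lindley kernel, included (as the authors remark) only for completeness, paralleling Lemma 4.6 of \cite{lovas}, which established the same minorization under Assumption \ref{as:queue:minor}.
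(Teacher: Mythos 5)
Your proof is correct and follows essentially the same route as the paper: both arguments reduce the minorization to the atom at $0$ via $Q(s,w,A)\ge Q(s,w,\{0\})\,\delta_0(A)$ and then bound $Q(s,w,\{0\})=\P(Z_1\ge w+s)$ from below by $\P(Z_1\ge M+\tau)>0$ uniformly over $s\in[0,M]$, $w\in[0,\tau]$. The paper's choice of any $\bar{\beta}$ with $\P(Z_1<M+\tau)<\bar{\beta}<1$ already covers the degenerate case you flag, so no separate treatment is needed.
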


\begin{proof}
	Let $s \in [0,M]$, $w \in [0,\tau]$, and $A\in\B(\R_{+})$. We can write
	\begin{align*}
		Q(s,w,A) &= \P \left(\left[w + s - Z_1\right]_+ \in A \right) \geq \P \left(\left[w + s - Z_1\right]_+ = 0 \right)\delta_0(A) \\
		&= \left(1 - \P \left(s + w - Z_1 > 0 \right)\right)\delta_0(A) \\
		&\geq \left(1 - \P \left(M + \tau - Z_1 > 0 \right)\right)\delta_0(A),
	\end{align*}
		which shows that the desired inequality holds for any choice of $\bar{\beta}$ satisfying 
		$$
		\P\left(Z_1 < M + \tau \right) < \bar{\beta} < 1.
		$$
\end{proof}

Let Assumptions \ref{as:queue:Z} and \ref{as:queue:S} be in force, and let $\bar{t} > 0$ be as specified in Assumption \ref{as:queue:S}. Then, by Lemma \ref{lem:queue:driftLT}, for the Lyapunov function $V(w) = e^{\bar{t}w} - 1$, $w \ge 0$, the drift condition \eqref{eq:Lyapunov} holds with $\gamma(s) = K(s) = \E [e^{\bar{t}(s - Z_1)}]$, for $s \in \Y = [0, M]$. Furthermore, since $\gamma(s) = K(s) \le e^{\bar{t}M}$, we have
\[
\E \left[ K(S_j) \prod_{k=1}^n \gamma(S_{k+j}) \right] \le e^{\bar{t}M(n+1)},
\]
for all $j \in \N$ and $n \ge 1$, hence the integrability condition in Assumption \ref{as:dynamics} is satisfied. Moreover, by Assumptions \ref{as:queue:Z} and \ref{as:queue:S},
\[
\bar{\gamma} = \limsup_{n \to \infty} \sup_{j \ge -1} \E^{1/n} \left[
\exp\left( \bar{t} \sum_{k=0}^n (S_{k+j} - Z_{k+j+1}) \right)
\right] = e^{\Lambda(\bar{t})} < 1,
\]
which implies that part~A) of Assumption \ref{as:dynamics} also holds.

Suppose that for some $0 < r < 1/\bar{\gamma}$ and $\tau := \frac{1}{\bar{t}} \log\left(1 + \frac{2}{r}\right)$,
\[
\P\left(Z_1 \ge M + \tau\right) > 0.
\]
We now verify part~B) of Assumption~\ref{as:dynamics}. We have $R(s) = \frac{2K(s)}{r\gamma(s)} = 2/r$ since $\gamma(s) = K(s)$ for all $s \in [0, M]$. By Lemma~\ref{lem:q:tech}, there exists $\bar{\beta} \in (0, 1)$ such that for all $s \in [0, M]$ and $w \in [0, \tau]$,
\[
Q(s, w, A) \ge (1 - \bar{\beta}) \delta_0(A), \quad A \in \mathcal{B}(\mathbb{R}_+),
\]
where $\delta_0$ denotes the Dirac measure concentrated at $0$. Note that $w \in [0, \tau]$ if and only if $V(w) \in [0, 2/r] = [0, R(s)]$, for $s \in [0, M]$. Therefore, the minorization condition~\eqref{eq:smallset} holds with $\kappa_R(s, \cdot) = \delta_0(\cdot)$ for $s \in [0, M]$. Furthermore, the minorization coefficient $\beta(R(s), s) = \bar{\beta} \in (0, 1)$ is independent of $s$, hence part~B) of Assumption~\ref{as:dynamics} is also satisfied.

\begin{theorem}\label{thm:queue:main}
	Assume that the sequences $(S_n)_{n \in \N}$ and $(Z_n)_{n \ge 1}$ satisfy Assumptions \ref{as:queue:Z} and \ref{as:queue:S}, moreover for some $0 < r < 1/\bar{\gamma}$,
	$$
	\P \left(Z_1 \ge M + \frac{1}{\bar{t}}\log \left(1+\frac{2}{r}\right)\right) > 0,
	$$
	where $\bar{t}$ is as in Assumption \ref{as:queue:S}, and $\bar{\gamma} = \exp (\Lambda (\bar{t}))$. 

	Then for the sequence of waiting times $(W_n)_{n\in\N}$, defined by the Lindley recursion \eqref{eq:Lindley}, the following statements hold.
	\begin{enumerate}[A)]
		\item 	If the sequence of service times $(S_n)_{n \in \N}$ is strongly mixing, then the $L^1$ weak law of large numbers holds:
		\[
		\frac{1}{n} \sum_{k=1}^{n} \left[ W_k - \E (W_k) \right] \stackrel{L^1}{\to} 0, \quad \text{as} \quad n \to \infty.
		\]
		
		\item If there exists $c>0$ and $\kappa>1$ such that $\alpha^S(n)\le c n^{-\kappa}$, $n\ge 1$, then
		\[
		\frac{1}{n} \sum_{k=1}^{n} \left[ W_k - \E (W_k) \right] \stackrel{\Pas}{\to} 0, \quad \text{as} \quad n \to \infty.
		\]
		
		\item If the sequence of mixing coefficients $(\alpha^S(n))_{n \in \N}$ satisfies the condition 
		\[
		\sum_{n \in \N} (n+1)^2 \alpha^S(n)^\delta < \infty
		\]
		for some exponent $\delta \in (0,1)$, then there exists a constant $\sigma > 0$ such that
		\[
		\limsup_{n \to \infty} \P \left( \frac{1}{\sqrt{n}} \left| \sum_{k=1}^{n} \left( W_k - \mathbb{E}(W_k) \right) \right| \ge a \right)
		\leq \int_{\mathbb{R}} \ind_{[-a,a]^c}(\sigma t) \frac{1}{\sqrt{2\pi}} e^{-\frac{t^2}{2}} \, \mathrm{d}t, \quad a > 0.
		\]
		
	\end{enumerate}	
\end{theorem}

\begin{proof}
	From Lemma \ref{lem:queue:driftLT}, it follows that the sequence of waiting times $(W_n)_{n\in\N}$ is a Markov chain in a random environment $(S_n)_{n \in \N}$, which satisfies the drift condition \eqref{eq:Lyapunov} with $V(w) = e^{tw} - 1$ and $\gamma(s) = K(s) = \E \left[e^{t(s-Z_0)}\right]$. 
	
	Given that $S_n \in [0, M]$ for all $n \in \N$, it holds that 
	$
	\E \left[ K(S_j) \prod_{k=1}^{n} \gamma(S_{k+j}) \right] \leq e^{tM(n+1)},
	$
	which implies that the integrability condition in Assumption \ref{as:dynamics} is trivially satisfied, i.e., 
	\[
	\E \left[ K(S_j) \prod_{k=1}^{n} \gamma(S_{k+j}) \right] < \infty \quad \text{for all} \quad j \in \N \text{ and } n \geq 1.
	\]
	
	Moreover, by Assumption \ref{as:queue:S}, the parameter $t > 0$ can be chosen such that 
	\[
	\bar{\gamma} = \limsup_{n \to \infty} \sup_{j \in \N} \E^{1/n} \left[ \prod_{k=0}^{n} \gamma(S_{k+j}) \right] < 1,
	\]
	which ensures that part A) of Assumption \ref{as:dynamics} is satisfied. 
	
	Notice that, since $\gamma = K$, hence by Lemma \ref{lem:q:tech}, the parametric kernel $Q$ satisfies the minorization condition \eqref{eq:smallset} with
	$$
	R(s) = \frac{2K(s)}{r\gamma(s)} = \frac{2}{r}, \,\, \beta(R(s),s) = \bar{\beta} < 1, \,\, \text{and} \,\, \kappa_{R(s)}(s,A) = \delta_0(A).
	$$
	We can conclude that part B) of Assumption \ref{as:dynamics} also holds.
	
	In Theorem \ref{thm:MCRE_LLN_and_weakapproaching}, the inequality \eqref{eq:Phi} is satisfied with $\Phi = \mathrm{id}_{[0,\infty)}$ for any $p > 1$, while the deterministic initial condition $W_0 = 0$ obviously ensures $\E V(W_0) < \infty$.
	
	We have thus shown that all conditions of Theorem \ref{thm:MCRE_LLN_and_weakapproaching} are met, from which the assertions of the present theorem follow.
\end{proof}

\smallskip 
In what follows, we will demonstrate that the seemingly complex and technical conditions of Lemma \ref{lem:VarSn} and Theorem \ref{thm:MCRE_FCLT} boil down to easily verifiable and natural conditions concerning the sequence $(S_n)_{n \in \mathbb{N}}$ and the inter-arrival time distribution $Z_1$. Utilizing this framework, we establish the functional central limit theorem for the sequence of waiting times $(W_n)_{n \in \mathbb{N}}$.

\smallskip 
Assume that the law of $Z_1$ is absolutely continuous with respect to the Lebesgue measure, i.e., $\law{Z_1}(\dint z)=f_{Z_1}(z)\,\dint z$, where $f_{Z_1}(z) = 0$ for $z \leq 0$. For any Borel set $A \in \mathcal{B}(\mathbb{R}_+)$, we can express $Q(s, w, A)$ as follows:
\begin{align*}
	Q(s,w,A) &= \int_{[0,\infty)} \ind_A\big((w+s-z)_+\big) f_{Z_1}(z)\,\dint z \\
	&= \ind_A(0) \, \mathbb{P}(Z_1 > w + s) + \int_0^{w+s} \ind_A(w+s-z) f_{Z_1}(z)\, \dint z \\
	&= \ind_A(0) \, \mathbb{P}(Z_1 > w + s) + \int_{[0,\infty)} \ind_A(z) f_{Z_1}(w + s - z)\, \dint z \\
	&= \int_{\mathcal{X} = \mathbb{R}_+} \ind_A(z) \left( \mathbb{P}(Z_1 > w + s) \ind_{\{0\}}(z) + f_{Z_1}(w + s - z) \ind_{(0,\infty)}(z) \right) \dint \nu(z),
\end{align*}
where $\nu(\dint z) = \delta_0(\dint z) + \dint z$.

To sum up, we obtained that there exists a Borel measure $\nu$ on $\mathcal{B}(\mathcal{X})$, such that $Q(s, w, \cdot) \ll \nu$ for all $(s, w) \in \mathcal{Y} \times \mathcal{X}$, where $\mathcal{X} = \mathbb{R}_+$ and $\mathcal{Y} = [0, M]$. Moreover, the transition densities are given by
\begin{equation}\label{eq:trans}
	p_s(z \mid w) = \frac{\dint Q(s,w,\cdot)}{\dint \nu}(z) = \mathbb{P}(Z_1 > w + s) \ind_{\{0\}}(z) + f_{Z_1}(w + s - z) \ind_{(0,\infty)}(z).
\end{equation}

For a fixed \( n \geq 1 \), let \( \ul{w} = [w_1, \ldots, w_n] \) and \( \ul{s} = [s_0, \ldots, s_{n-1}] \), and define the likelihood function as
\begin{equation}\label{eq:q:L}
	p(\ul{w} \mid \ul{s}) = \prod_{k=1}^{n} p_{s_{k-1}}(w_k \mid w_{k-1}).
\end{equation}

If the density function \( f_{Z_1}: \mathbb{R} \to [0, \infty) \) is continuously differentiable everywhere, then at every point \( \ul{w} \in [0, \infty)^n \) where \( p(\ul{w} \mid \ul{s}) > 0 \), the derivative \( \partial_{s_i} \log p(\ul{w} \mid \ul{s}) = \partial_{s_i} p_{s_i}(w_{i+1} \mid w_i) \) for \( i = 0, \ldots, n-1 \) exists and is finite. Furthermore, we have
\begin{equation}\label{eq:Q:CR:interchange}
	\frac{\partial}{\partial s_i} \int_{\mathcal{X}^n} (w_1 + \ldots + w_n) p(\ul{w} \mid \ul{s}) \, \dint \ul{w}
	=
	\int_{\mathcal{X}^n} (w_1 + \ldots + w_n) \, \partial_{s_i} p(\ul{w} \mid \ul{s}) \, \dint \ul{w},
\end{equation}
hence the regularity conditions required for the Cram\'er-Rao inequality are satisfied.

\begin{lemma}\label{lem:q:fclt}
	Assume that $f_{Z_1}: [0, \infty) \to [0, \infty)$ is continuously differentiable everywhere, and 
	\[
	\int_{[0, \infty)}
	\frac{f_{Z_1}'(z)^2}{f_{Z_1}(z)}
	\,\mathrm{d}z < \infty.
	\]
	
	Then there exists a constant $r^*>0$ such that for any $n \geq 1$, and $\Sigma_n = \sum_{k=1}^{n} W_k$, the following inequality holds:
	\[
	\var(\Sigma_n) \geq \frac{1}{r^*}\sum_{k=0}^{n-1}\P (S_k>Z_1)^2.
	\]
\end{lemma}

\begin{proof}
	Fix $n \geq 1$ and let $\ul{s} = [s_0, \ldots, s_{n-1}] \in [0, M]^n$. Since $(S_n)_{n \in \mathbb{N}}$ is a scalar-valued process, the Fisher information matrix is diagonal. Therefore, using \eqref{eq:trans}, for $k = 0, \ldots, n-1$, we have
	\begin{align*}
		r(I(s_k)) &= [I(\ul{s})]_{kk} = \mathbb{E}\left[ \left( \partial_{s_k} \log p_{s_k}(W_{k+1} \mid W_k) \right)^2 \middle| \ul{S} = \ul{s} \right] 
		\\
		&= \mathbb{E} \left[ \int_{\mathbb{R}} \frac{f_{Z_1}(W_k + s_k)^2}{\mathbb{P}(Z_1 > W_k + s_k)} \ind_{\{0\}}(z) + \frac{f_{Z_1}'(W_k + s_k - z)^2}{f_{Z_1}(W_k + s_k - z)} \ind_{(0, \infty)}(z) \, (\delta_0 (\mathrm{d}z) + \mathrm{d}z) \middle| \ul{S} = \ul{s} \right]
		\\
		&= \mathbb{E} \left[ \frac{f_{Z_1}(W_k + s_k)^2}{\mathbb{P}(Z_1 > W_k + s_k)} + \int_{[0, \infty)} \frac{f_{Z_1}'(W_k + s_k - z)^2}{f_{Z_1}(W_k + s_k - z)} \, \mathrm{d}z \middle| \ul{S} = \ul{s} \right]
		\\
		&\leq \sup_{z \in [0, \infty)} \frac{f_{Z_1}(z)^2}{\mathbb{P}(Z_1 > z)} + \int_{[0, \infty)} \frac{f_{Z_1}'(z)^2}{f_{Z_1}(z)} \, \mathrm{d}z.
	\end{align*}
	
	By l'Hôpital's rule, we have $\lim_{z \to \infty} \frac{f_{Z_1}(z)^2}{\mathbb{P}(Z_1 > z)} = \lim_{z \to \infty} -f_{Z_1}'(z) = 0$, which implies that 
	\[
	\sup_{z \in [0, \infty)} \frac{f_{Z_1}(z)^2}{\mathbb{P}(Z_1 > z)} < \infty.
	\]
	Therefore, there exists an upper bound for $r(I(s_k))$ that is independent of both $k$ and $n$; let this bound be denoted by $r^*$.
	
	Since the mapping $(s, w, z) \mapsto (s + w - z)_+$, defining the Lindley recursion \eqref{eq:Lindley}, is monotonic in both $s$ and $w$, we observe that for each $1 \leq j \leq n$ and $0 \leq k \leq n-1$, the derivative $\partial_{s_k} \mathbb{E} \left[ W_j \mid \ul{S} \right] \geq 0$. Therefore,
	\begin{align*}
		\lVert \partial_{s_k} \mathbb{E} \left[ \Sigma_n \mid \ul{S} \right] \rVert^2 &= \left( \sum_{j=1}^{n} \partial_{s_k} \mathbb{E} \left[ W_j \mid \ul{S} \right] \right)^2
		\geq \left( \partial_{s_k} \mathbb{E} \left[ W_{k+1} \mid \ul{S} \right] \right)^2 = \mathbb{P}(W_k + S_k - Z_{k+1} > 0 \mid \ul{S})^2
		\\
		&\geq \mathbb{P}(S_k - Z_1 > 0 \mid \ul{S})^2.
	\end{align*}
	
	By Lemma \ref{lem:VarSn} and the Cauchy-Schwartz inequality, using the bounds obtained above, we get
	\begin{align*}
		\var(\Sigma_n) &\geq \sum_{k=0}^{n-1} \mathbb{E} \left[ \frac{1}{r(I_k)} \lVert \partial_{s_k} \mathbb{E} \left[ \Sigma_n \mid \ul{S} \right] \rVert^2 \right]
		\\
		&\geq \sum_{k=0}^{n-1} \frac{1}{r^*} \mathbb{E} \left[ \mathbb{P}(S_k - Z_1 > 0 \mid \ul{S})^2 \right]
		\geq \frac{1}{r^*}\sum_{k=0}^{n-1}\P (S_k>Z_1)^2,
	\end{align*}
	which completes the proof.
\end{proof}

\begin{theorem}\label{thm:queue:FCLT}
	Beyond the conditions established in Theorem \ref{thm:queue:main}, suppose that the function $f_{Z_1}: [0, \infty) \to [0, \infty)$ is continuously differentiable everywhere, and that
	\[
	\int_{[0, \infty)} \frac{f_{Z_1}'(z)^2}{f_{Z_1}(z)} \,\mathrm{d}z < \infty.
	\]
	
	Additionally, assume that 
	\begin{equation}\label{eq:cond:queue}
	\liminf_{n\to\infty}\frac{1}{n}\sum_{k=0}^{n-1}\P (S_k > Z_1)^2 > 0,
	\end{equation}
	and that for some exponent $\delta > 0$, the series of mixing coefficients $(\alpha^S(n))_{n \in \N}$ satisfies
	\[
	\sum_{n \geq 1} n^{\delta} \alpha^S(n) < \infty.
	\]
	
	Let $\Sigma_n = W_1 + \ldots + W_n$, $v_n(t) = \min \{ 1 \leq k \leq n \mid \var(\Sigma_k) \geq t \var(\Sigma_n) \}$, and for $1 \leq k \leq n$, define
	\[
	\xi_{k,n} = \frac{W_k - \E [W_k]}{\var(\Sigma_n)^{1/2}}.
	\]
	
	Then the sequence of functions $B_n(t) = \sum_{k=1}^{v_n(t)} \xi_{k,n}$, for $t \in (0, 1]$ and $n \geq 1$, converges in distribution in $D([0, 1])$ (equipped with the uniform topology) to a standard Brownian motion $B$.
\end{theorem}
\begin{proof}
	In the proof of Theorem \ref{thm:queue:main}, we showed that under these conditions, the sequence $(W_n)_{n \in \N}$ forms a Markov chain in a random environment, which satisfies the conditions of Theorem \ref{thm:MCRE_LLN_and_weakapproaching}. Additionally, Lemma \ref{lem:q:fclt}, in conjunction with the condition \eqref{eq:cond:queue}, guarantees that 
	\[
	\liminf_{n\to\infty}\frac{1}{n}\sum_{k=0}^{n-1}\E \left[ \frac{1}{r(I_k)} \lVert \partial_{s_k} \E \left[ S_n \mid \ul{S} \right] \rVert^2 \right]>0.
	\] 
	
	With the Lyapunov function $V(w) = e^{\bar{t}w - 1}$, the inequality \eqref{eq:Phi} that appears in the conditions of Theorem \ref{thm:MCRE_LLN_and_weakapproaching} holds for  $\Phi = \mathrm{id}_{\mathbb{R}^+}$ and any $p > 1$. Consequently, the sequence $(\alpha^S(n))_{n \in \mathbb{N}}$ satisfies the condition regarding the mixing coefficients in Theorem \ref{thm:MCRE_FCLT}. 
	
	Overall, we conclude that the conditions of Theorem \ref{thm:MCRE_FCLT} are met, which leads to the desired result.
\end{proof}

%
%\begin{remark}\label{ref:closing}
%	The conditions of Theorem \ref{thm:queue:stac} can be further weakened. If the sequence of service times $(S_n)_{n \in \mathbb{Z}}$ is strongly stationary, the inequality in Assumption \ref{as:queue:S} simplifies to the inequality $\mathbb{E}(S_0) < \mathbb{E}(Z_1)$. In the proofs of Lemma \ref{lem:queue:driftLT} and Lemma \ref{lem:q:tech}, we did not exploit the mixing properties of the $S$ sequence, so by carefully following the proof of Theorem \ref{thm:queue:stac}, it is straightforward to see that the result remains valid under Assumption \ref{as:queue:Z}, \ref{as:queue:S}, and the condition in Theorem \ref{thm:queue:main} that for some $0 < r < 1/\bar{\gamma}$,
%	\begin{equation}\label{eq:Zdist}
%		\mathbb{P}\left( Z_1 \geq M + \frac{1}{\bar{t}} \log\left(1 + \frac{2}{r}\right) \right) > 0,
%	\end{equation}
%	where $\bar{t}$ and $\bar{\gamma}$ are as in the inequality from Lemma \ref{lem:queue:driftLT}.
%\end{remark}

\medskip 
The conditions in Assumptions~\ref{as:queue:Z} and~\ref{as:queue:S} can most likely be significantly weakened. We conjecture that the boundedness condition on the service times $(S_n)_{n \in \mathbb{N}}$ and the assumption on the tail probabilities of $Z_1$ are of a purely technical nature and may be removed in future work. Theorem~1.8 in~\cite{lovas2021ergodic}, for instance, addresses the case when both $S$ and $Z$ are unbounded, $(S_n)_{n \in \mathbb{N}}$ is i.i.d., and so is $(Z_n)_{n \ge 1}$. In that setting, the main challenge is that the condition on the minorization coefficient in part~B) of Assumption~\ref{as:dynamics} is no longer satisfied. However, the results of our paper~\cite{lovas} can still be applied under appropriate  assumptions on the environment. One such condition, as formulated in Theorem~1.8 of~\cite{lovas2021ergodic}, is that $Z_1$ has a Gumbel-like tail:
\[
\P(Z_1 \ge z) \le C_1 \exp(-C_2 e^{C_3 z}), \quad \text{for some } C_1,\, C_2,\, C_3 > 0.
\]

Answering such questions, as well as investigating the case when the service times $(S_n)_{n \in \mathbb{N}}$ are i.i.d.\ and the inter-arrival times $(Z_n)_{n \ge 1}$ are weakly dependent, will be the subject of our future research.

\subsection*{Acknowledgment}

I extend my sincere gratitude to Miklós Rásonyi for his invaluable assistance and insightful discussions on the subject matter, which greatly contributed to the development of this paper. I would also like to thank the two anonymous reviewers for their careful reading and constructive comments, which significantly improved the quality of the manuscript.

\subsection*{Declaration of generative AI and AI-assisted technologies in the writing process}

The author(s) would like to declare that during the preparation of this work, ChatGPT 3.5 was utilized to enhance the language and readability of the paper. As a non-native English speaker, the author(s) sought to improve the quality of the English language presentation. It is important to emphasize that the ideas and content of this article represent the author's original intellectual contributions. After utilizing this tool, the author(s) thoroughly reviewed and edited the content as necessary and take(s) full responsibility for the final publication.

%\bibliography{mixingMCRE}

\begin{thebibliography}{10}
	
	\bibitem{PARX_AGOSTO2016640}
	{\sc A.~Agosto, G.~Cavaliere, D.~Kristensen, and A.~Rahbek}, {\em Modeling
		corporate defaults: Poisson autoregressions with exogenous covariates
		(PARX)}, Journal of Empirical Finance, 38 (2016), pp.~640--663.
	\newblock Recent developments in financial econometrics and empirical finance.
	
	\bibitem{BercherCramer}
	{\sc J.~F. Bercher}, {\em On generalized Cram\'er-Rao inequalities, and an extension of the Shannon-Fisher-Gauss setting}, New Perspectives on Stochastic Modeling and Data Analysis, (2014), pp.~19--35.
	
	\bibitem{BorovkovCramer}
	{\sc A.~A. Borovkov}, {\em Mathematical Statistics}, CRC Press,
	1999.
	
	\bibitem{bborovkov72}
	{\sc A.~A. Borovkov}, {\em Stochastic Processes in Queuing Theory}, Nauka,
	1972.
	\newblock In Russian.
	
	\bibitem{borovkov}
	{\sc A.~A. Borovkov}, {\em Ergodicity and
		Stability of Stochastic Processes}, John Wiley \& Sons, 1998.
	
	\bibitem{borovkov1993stochastically}
	{\sc A.~A. Borovkov and S.~G. Foss}, {\em Stochastically recursive sequences
		and their generalizations}, Limit theorems for random processes and their
	applications (Russian), 20 (1993), pp.~32--103.
	
	\bibitem{largescaleML2018}
	{\sc L.~Bottou, F.~E. Curtis, and J.~Nocedal}, {\em Optimization methods for
		large-scale machine learning}, {S}{I}{A}{M} Review, 60 (2018), pp.~223--311.
	
	\bibitem{Bradley2005}
	{\sc R.~C. Bradley}, {\em Basic properties of strong mixing conditions. a
		survey and some open questions}, Probability Surveys, 2 (2005), pp.~107--144.
	
	\bibitem{BRADLEY19811}
	{\sc R.~C. Bradley}, {\em Central limit theorems under weak dependence},
	Journal of Multivariate Analysis, 11 (1981), pp.~1--16.
	
	\bibitem{Bryc1996LDP}
	{\sc W. Bryc and A. Dembo}, {\em Large deviations and strong mixing}, Annales de l'IHP Probabilit\'es et statistiques, 32(4) 1996, pp.~549--569.
	
	\bibitem{cogburn1984ergodic}
	{\sc R.~Cogburn}, {\em The ergodic theory of Markov chains in random environments}, Zeitschrift f{\"u} Wahrscheinlichkeitstheorie und verwandte
	Gebiete, 66 (1984), pp.~109--128.
	
	\bibitem{cogburn1990direct}
	\leavevmode\vrule height 2pt depth -1.6pt width 23pt, {\em On direct convergence and periodicity for transition probabilities of Markov chains in random environments}, The Annals of Probability,  (1990), pp.~642--654.
	
	\bibitem{debaly_truquet_2021}
	{\sc Z.~M. Debaly and L.~Truquet}, {\em Iterations of dependent random maps and
		exogeneity in nonlinear dynamics}, Econometric Theory, 37 (2021),
	pp.~1135--1172.
	
	\bibitem{DiFr99}
	{\sc P.~Diaconis and D.~Freedman}, {\em Iterated random functions},
	{S}{I}{A}{M} Review,  (1999), pp.~45--76.
	
	\bibitem{doukhan94}
	{\sc P. Doukhan}, {\em Mixing. In: Mixing. Lecture Notes in Statistics}, Springer, New York, NY, vol 85, 1994.
	
	\bibitem{ekstrom2014general}
	{\sc M.~Ekstr{\"o}m}, {\em A general central limit theorem for strong mixing
		sequences}, Statistics \& Probability Letters, 94 (2014), pp.~236--238.
	
	\bibitem{erlang1909}
	{\sc A.~K. Erlang}, {\em The theory of probabilities and telephone conversations}, Nyt Tidsskrift for Matematik B, 20 (1909), pp.~33--39.
			
	\bibitem{cramer_nondiff}
	{\sc T.~Espinasse and P.~Rochet}, {\em Cram\'er-Rao inequality for non-differentiable models}, Comptes Rendus Mathematique, 350 (2012), 711--715.
	
	\bibitem{FelsmannThesis}
	{\sc D.~Felsmann}, {\em Stability of general state space {M}arkov chains},
	Master's thesis, {E\"otv\"os Lor\'and University}, 2022.
	\newblock (in {H}ungarian).
	
	\bibitem{foss2003extended}
	{\sc S.~Foss and T.~Konstantopoulos}, {\em Extended renovation theory and limit
		theorems for stochastic ordered graphs}, Markov Process. Related Fields, 9
	(2003), pp.~413--468.
	
	\bibitem{gerencser2022invariant}
	{\sc B.~Gerencs{\'e}r and M.~R{\'a}sonyi}, {\em Invariant measures for
		multidimensional fractional stochastic volatility models}, Stochastics and
	Partial Differential Equations: Analysis and Computations,  (2022),
	pp.~1--33.
	
	\bibitem{rasonyi2018}
	{\sc B.~Gerencs{\'e}r and M.~R{\'a}sonyi}, {\em On the ergodicity of certain
		{M}arkov chains in random environments}, Journal of Theoretical Probability,
	(2023), pp.~1--33.
	
	\bibitem{Gorgi2020}
	{\sc P.~Gorgi and S.~J. Koopman}, {\em {Beta observation-driven models with
			exogenous regressors: a joint analysis of realized correlation and leverage
			effects}}, Tinbergen Institute Discussion Papers 20-004/III, Tinbergen
	Institute, Jan. 2020.
	
	\bibitem{gyrfi2023strong}
	{\sc L.~Gy\"orfi, A.~Lovas, and M.~R\'asonyi}, {\em On the strong stability of
		ergodic iterations}, arXiv preprint,  (2023), p.~2304.04657.
	
	\bibitem{gyorfi2002}
	{\sc L.~Gy\"orfi and G.~Morvai}, {\em Queueing for ergodic arrivals and
		services}, in Limit theorems in probability and statistics, {I. Berkes, et
		al.}, ed., vol.~2, 2002, pp.~127--141.
	\newblock Fourth Hungarian colloquium on limit theorems in probability and
	statistics, Balatonlelle, Hungary, 1999.
	
	\bibitem{hafouta2021functional}
	{\sc Y.~Hafouta}, {\em On the functional CLT for slowly mixing triangular
		arrays}, arXiv preprint arXiv:2111.05807,  (2021).
	
	\bibitem{hall2014martingale}
	{\sc P.~Hall and C.~C. Heyde}, {\em Martingale limit theory and its
		application}, Academic press, 2014.
	
%	\bibitem{halmos1944general}
%	{\sc P.~R. Halmos}, {\em In general a measure preserving transformation is
%		mixing}, Annals of Mathematics,  (1944), pp.~786--792.
%	
	\bibitem{hansen2019weak}
	{\sc B.~Hansen}, {\em A weak law of large numbers under weak mixing}, {\url{https://users.ssc.wisc.edu/~bhansen/papers/wlln.pdf}} (2019).
	
	\bibitem{herrndorf1984}
	{\sc N.~Herrndorf}, {\em {A Functional Central Limit Theorem for Weakly
			Dependent Sequences of Random Variables}}, The Annals of Probability, 12
	(1984), pp.~141 -- 153.
	
	\bibitem{Ios09}
	{\sc M.~Iosifescu}, {\em Iterated function sytems. a critical survey},
	Mathematical Reports,  (2009), pp.~181--229.
	
	\bibitem{kifer1}
	{\sc Y.~Kifer}, {\em Perron-{F}robenius theorem, large deviations, and random
		perturbations in random environments}, Math. Zeitschrift, 222 (1996),
	pp.~677--698.
	
	\bibitem{kifer1998limit}
	{\sc Y.~Kifer}, {\em Limit theorems for random transformations and processes in
		random environments}, Transactions of the American Mathematical Society, 350
	(1998), pp.~1481--1518.
	
	\bibitem{lindvall2002lectures}
	{\sc T.~Lindvall}, {\em Lectures on the coupling method}, Courier Corporation,
	2002.
	
	\bibitem{lovas2021ergodic}
	{\sc A.~Lovas and M.~R{\'a}sonyi}, {\em Ergodic theorems for queuing systems
		with dependent inter-arrival times}, Operations Research Letters, 49 (2021),
	pp.~682--687.
	
	\bibitem{lovas}
	{\sc A.~Lovas and M.~R{\'a}sonyi}, {\em {M}arkov chains in random environment
		with applications in queuing theory and machine learning}, Stochastic
	Processes and their Applications, 137 (2021), pp.~294--326.
	
	\bibitem{lovasCLT}
	{\sc A.~Lovas and M.~R\'asonyi}, {\em Functional central limit theorem and
		strong law of large numbers for stochastic gradient {L}angevin dynamics},
	Appl Math Optim,  (2023)), p.~78.
	
	\bibitem{loynes1962}
	{\sc R.~M. Loynes}, {\em The stability of a queue with non-independent
		inter-arrival and service times}, Mathematical Proceedings of the Cambridge
	Philosophical Society, 58 (1962), pp.~497--520.
	
	\bibitem{McLeish}
	{\sc D.~L. McLeish}, {\em A maximal inequality and dependent strong laws}, The Annals of probability, (1975), 3(5), pp.~829--839.
	
	\bibitem{merlevede2020functional}
	{\sc F.~Merlevede and M.~Peligrad}, {\em Functional clt for nonstationary
		strongly mixing processes}, Statistics \& Probability Letters, 156 (2020),
	p.~108581.
	
	\bibitem{merlevede2011concentration}
	{\sc F.~Merlev{\`e}de and M.~Peligrad, and E. Rio}, {\em A Bernstein type inequality and moderate deviations for weakly dependent sequences}, Probab. Theory Relat. Fields 151 (2011), pp.~435--474.
	
	\bibitem{merlevede2019functional}
	{\sc F.~Merlev{\`e}de, M.~Peligrad, and S.~Utev}, {\em Functional clt for
		martingale-like nonstationary dependent structures}, Bernoulli, 25 (2019),
	pp.~3203--3233.
	
	\bibitem{mt}
	{\sc S.~P. Meyn and R.~L. Tweedie}, {\em {M}arkov chains and stochastic
		stability}, Springer-Verlag, 1993.
	
	\bibitem{Nguyen2017}
	{\sc D.~S. Nguyen}, {\em Application of queuing theory in service design}, in
	2017 IEEE International Conference on Industrial Engineering and Engineering
	Management (IEEM), 2017, pp.~837--840.
	
	\bibitem{orey1991markov}
	{\sc S.~Orey}, {\em Markov chains with stochastically stationary transition
		probabilities}, The Annals of Probability, 19 (1991), pp.~907--928.
	
%	\bibitem{rohlin1948general}
%	{\sc V.~Rohlin}, {\em A “general” measure-preserving transformation is not
%		mixing}, in Doklady Akad. Nauk SSSR (NS), vol.~60, 1948, p.~12.

	\bibitem{rio1995LIL} 
	{\sc E.~Rio}, {\em The functional law of the iterated logarithm for stationary strongly mixing sequences}, The Annals of Probability, 23(3) (1995), pp.~1188--1203
	
	\bibitem{rosenblatt1956central}
	{\sc M.~Rosenblatt}, {\em A central limit theorem and a strong mixing
		condition}, Proceedings of the national Academy of Sciences, 42 (1956),
	pp.~43--47.
	
	\bibitem{seppalainen1994large}
	{\sc T.~Seppalainen}, {\em Large deviations for Markov chains with random
		transitions}, The Annals of Probability, 22 (1994), pp.~713--748.
	
	\bibitem{stenflo}
	{\sc O.~Stenflo}, {\em {M}arkov chains in random environments and random
		iterated function systems}, Trans. American Math. Soc., 353 (2001),
	pp.~3547--3562.
	
	\bibitem{Truquet1}
	{\sc L.~Truquet}, {\em Ergodic properties of some {M}arkov chains models in
		random environments}, 2021.
	
	\bibitem{TRUQUET2023294}
	{\sc L.~Truquet}, {\em Strong mixing properties of discrete-valued time series
		with exogenous covariates}, Stochastic Processes and their Applications, 160
	(2023), pp.~294--317.
	
	\bibitem{Welsch1972}
	{\sc. R.~E. Welsch}, {\em  Limit laws for extreme order statistics from strong-mixing processes}, The Annals of Mathematical Statistics, 43(2) (1972), pp.~439--446.
	
	\bibitem{white2000}
	{\sc H. White}, {\em Asymptotic Theory for Econometricians}, Emerald Group Publishing Limited, 2000.
	
\end{thebibliography}
%\bibliographystyle{siam}

\newpage
\appendix
\section{Brief survey of key results on $\alpha$-mixing sequences}\label{sec:mixing_survey}

In this section, we present a selection of theorems from the literature on $\alpha$-mixing sequences. These results encompass fundamental topics such as the law of large numbers, and the central limit theorem. However, we do not address all relevant topics--for instance, the distribution of extreme values in $\alpha$-mixing sequences or concentration inequalities. For a detailed treatment of these subjects, we refer the interested reader to \cite{Welsch1972} and \cite{merlevede2011concentration}.  

By integrating the theorems presented here with the transition of mixing results from Sections \ref{sec:mixtrans} and \ref{sec:MCRE}, we establish a robust theoretical framework. This framework enables the statistical analysis of non-linear autoregressive processes with exogenous covariates and Markov chains in random environments. Given the vast literature on $\alpha$-mixing processes, a comprehensive review is beyond the scope of this paper. Instead, we focus on key results that are particularly useful for the statistical analysis of weakly dependent sequences. For readers seeking a more in-depth overview, we recommend Doukhan \cite{doukhan94}, which, with its detailed references and literature survey, provides an excellent starting point.

In theorems concerning $\alpha$-mixing sequences, the key condition is typically about how rapidly the mixing coefficient sequence $(\alpha^W(n))_{n\in\N}$ decays to zero. In the econometric literature, the term \emph{size} is frequently used to characterize this behavior (cf. Definition 3.45 in \cite{white2000}). However, since the definition is not uniform, despite its ability to make theorems more concise and elegant, we avoid using size to prevent misunderstandings.

In the context of stationary processes, it is well-known that strong mixing implies ergodicity ensuring the applicability of the strong law of large numbers. 
For non-stationary, heterogeneously distributed $\alpha$-mixing sequences, McLeish \cite{McLeish} established the following version of the strong law of large numbers.
\begin{theorem}[McLeish, 1975]\label{thm:McLeish}
	Consider a sequence of $\R$-valued  random variables $(W_n)_{n\in\N}$ with $\E W_n=0$, $n\in\N$ and with $\alpha$-mixing coefficients satisfying
	$\alpha^W (n)\le c n^{-\frac{r}{r-2}}$, $n\in\N$,
	for $c>0$ and $r>2$. Suppose that for some $p$ such that $r/2<p\le r<\infty$,
	$$
	\sum_{n=1}^\infty \frac{\E^{2/r} |W_n|^p}{n^{2p/r}}<\infty.
	$$
	Under these conditions,
	$$
	\frac{1}{n}\sum_{k=1}^n W_n\stackrel{\Pas}{\to} 0,\quad n\to\infty.
	$$
\end{theorem}
\begin{remark}\label{rem:McLeish}
	The condition related to the decay of the moments in Theorem \ref{thm:McLeish}
	$$
	\sum_{n=1}^\infty \frac{\E^{2/r} |W_n|^p}{n^{2p/r}} < \infty
	$$
	is automatically satisfied if $\sup_{n \in \N} \E |W_n|^p < \infty$.
\end{remark}

In the context of ergodic theory, we distinguish between ergodic, weakly mixing, and strongly mixing processes. 
We classify a collection of random variables of the form 
$\{\xi_{n,i} \mid 1 \le i \le n\}$ as weakly mixing if
$$
\frac{1}{n} \sum_{k=1}^{n} \alpha^{\xi_{n, \cdot}}(k) \to 0, \quad n \to \infty,
$$
where $\alpha^{\xi_{n, \cdot}}(\cdot)$ denotes the $\alpha$-mixing coefficient corresponding to the $n$th row of the array.

Hansen established the weak law of large numbers, more precisely $L^1$-law of large numbers, for heterogenous weak mixing processes and arrays \cite{hansen2019weak}. A notable and valuable characteristic of this result is the absence of a stationarity assumption, thereby broadening the applicability of the result. The elegance of Hansen's proof lies in its simplicity, leveraging the standard representation of the variance of the truncated mean as the weighted Ces\`{a}ro sum of covariances, and bounds the latter using the mixing inequality for bounded random variables. 

As noted by Hansen \cite{hansen2019weak}, strong mixing implies weak mixing, and weak mixing, in turn, implies ergodicity, moreover this nesting is strict. However, for any process $(W_n)_{n\in\N}$, the sequence of mixing coefficients $(\alpha^W(n))_{n\in\N}$ is monotonically decreasing. Consequently, their Ces\`{a}ro sum converges to zero if and only if $\alpha^W(n) \to 0$ as $n \to \infty$. As a result, the classes of weakly mixing and strongly mixing random sequences coincide. Thus, for weakly dependent sequences, Hansen's theorem can be stated as follows:
% WLLN under Weak mixing
\begin{theorem}[Hansen, 2019]\label{thm:Hansen}
	Consider a strongly mixing $\R$-valued process $(W_n)_{n\in\N}$, and define the sequence of partial sums $S_n:=\sum_{k=1}^{n} W_k$, $n\ge 1$. Additionally, suppose that the condition
	\begin{equation}\label{eq:uint}
		\lim_{B\to\infty}\sup_{n\ge 1}\frac{1}{n}\sum_{k=1}^{n}
		\E (|W_k|\ind (|W_k|\ge B)) = 0
	\end{equation}
	holds.
	Then, we have 
	$$
	\frac{S_n}{n}-\frac{\E (S_n)}{n}\stackrel{L^1}{\to} 0,\,\,n\to\infty.
	$$
\end{theorem}
\begin{proof}
	Hansen stated and proved this theorem, in a bit more general setting, for triangular arrays of weakly mixing variables (See Theorem 1 on page 4 in \cite{hansen2019weak}). For making the explanation self-contained, we present the proof of this simpler version here.

	Without the loss of generality we can assume that $\E (W_n)=0$, $n\in\N$. Let $\eps>0$ be arbitrary and choose $B>0$ such that
	\begin{equation}\label{eq:uint2}
		\sup_n\frac{1}{n}\sum_{k=1}^n \E (|W_k|\ind (|W_k|>B))<\eps.
	\end{equation}
	Let us introduce 
	\begin{align*}
		W_n'  &= W_n\ind (|W_n|\le B)-\E (W_n\ind (|W_n|\le B)) \\
		W_n'' &= W_n\ind (|W_n|> B)-\E (W_n\ind (|W_n|> B)).	
	\end{align*}
	Obviously, $W_n = W_n'+W_n''$, $n\in\N$ hence by the triangle inequality and \eqref{eq:uint2}, we have
	\begin{equation}\label{eq:hansen_L1_est}
		\frac{1}{n}\E\left|\sum_{k=1}^n W_k\right|
		\le
		\frac{1}{n}\E\left|\sum_{k=1}^n W_k'\right|+
		\frac{2}{n}\sum_{k=1}^n \E (|W_k|\ind (|W_k|>B))
		<\frac{1}{n}\E\left|\sum_{k=1}^n W_k'\right|+2\eps.
	\end{equation}
	Furthermore, $W_n'$ satisfies the bound $|W_n'|\le 2B$, and for its mixing coefficient
	$\alpha^{W'}(n)\le \alpha^W (n)$, $n\in\N$ holds, consequently by the mixing inequality
	for bounded variables (cf. Theorem A.5 in \cite{hall2014martingale} or the proof of Lemma A.1. in \cite{lovasCLT}), 
	$$
	|\E (W_k' W_l')|=|\cov (W_k',W_l')|\le 16 B^2 \alpha^W (|k-l|). 
	$$
	By Jensen's inequality, we can estimate
	\begin{align*}
		\E^2\left|\sum_{k=1}^n W_k'\right| 
		\le
		\sum_{k,l=1}^n \E (W_k' W_l')
		\le 
		16 B^2 
		\sum_{k,l=1}^n
		\alpha^W (|k-l|)=
		16B^2 n
		\left(
		\alpha^W (0)
		+
		2\sum_{k=1}^{n}
		\alpha^W (k)
		\right).
	\end{align*}
	Substituting this into \eqref{eq:hansen_L1_est} yields
	\begin{align*}
		\frac{1}{n}\E\left|\sum_{k=1}^n W_k\right|
		<
		4B \left(
		\frac{\alpha^W (0)}{n}
		+
		\frac{2}{n}\sum_{k=1}^{n}\alpha^W (k) 
		\right)^{1/2} + 2\eps,
	\end{align*}
	where the upper bound tends to $2\eps$ as $n\to\infty$ since $(W_n)_{n\in\N}$ is strongly mixing,
	and thus $\limsup_{n\to\infty}\frac{1}{n}\E\left|\sum_{k=1}^n W_k\right|<2\eps$ holds for arbitrary $\eps>0$ which completes the proof.	
\end{proof}

\begin{remark}\label{rem:Hansen}
	The average uniform integrability condition \eqref{eq:uint} is automatically satisfied if the process $(W_n)_{n\in\N}$ has a uniformly bounded moment $\sup_{n\in\N}\E (|W_n|^r)<\infty$
	for some $r>1$.
\end{remark}

Our objective is to establish the (functional) central limit theorem for certain functionals of the sequence of iterates $(X_n)_{n\in\N}$ when $(Y_n)_{n\in\N}$ is merely $\alpha$-mixing and stationarity is not assumed. While we studied this problem in the context of stochastic gradient Langevin dynamics \cite{lovasCLT}, our focus was limited to stationary data streams. Our approach relied significantly on Corollary 2 in \cite{herrndorf1984}, which offers broad applicability, extending even to non-stationary processes. However, the condition $\lim_{n\to\infty} n^{-1}\mathbb{E}(S_n^2) = \sigma^2$ required by this corollary is not generally met in cases  when the exogenous regressor $(Y_n)_{n\in\mathbb{N}}$ is non-stationary.

Very recently there was major progress on the functional CLT for non-stationary mixing sequences.
In \cite{merlevede2019functional} Merlev{\`e}de, Peligrad and Utev answered the question raised by Ibragimov concerning the CLT for triangular arrays of non-stationary weakly dependent variables under the Lindeberg condition (cf. page 1 in \cite{hafouta2021functional}). Subsequently, Merlev{\`e}de and Peligrad proved the functional CLT for triangular arrays satisfying a dependence condition weaker than the standard strong mixing condition, termed the weak strong mixing condition \cite{merlevede2020functional}. Both of these results require the condition:
\begin{equation}\label{eq:varSn}
	\sum_{k=1}^{n} \var(W_k) = O(\var(S_n))
\end{equation}
which is difficult to verify in general. A key contribution of our paper is the derivation of a Cramér-Rao lower bound for the variance of partial sums in \eqref{eq:varSn}, facilitating a functional CLT when $(X_n)_{n\in\N}$ forms a Markov chain in a random environment (See Section \ref{sec:MCRE}).

Essentially, aside from certain moment conditions, Theorem 1 in Ekstr\"om's paper \cite{ekstrom2014general} mandates only the verification that the $\alpha$-mixing coefficients exhibit a sufficiently rapid decrease:

% CLT under further assumptions via Eström
\begin{theorem}[Ekstr\"om, 2014]\label{thm:Ekstrom}
	Let $\{\xi_{n,i}\mid 1\le i\le d_n,\,\,n\in\N\}$ be an array of $\R$-valued random variables with $\E \xi_{n,i} = 0$, $1\le i\le d_n,\,\,n\in\N$, and define $S_n=\sum_{k=1}^{d_n} \xi_{n,k}$, $n\in\N$.
	Assume that for some $r>0$, 
	\begin{enumerate}[i.]
		\item $\displaystyle{\sup_{n\in\N}\max_{1\le i\le d_n} \E |\xi_{n,i}|^{2+r}<\infty}$, and
		
		\item $\sup_{n\in\N}\sum_{k=0}^{\infty}(k+1)^2 \left(\alpha^{\xi_{n,\cdot}} (k)\right)^{\frac{r}{4+r}}<\infty$,
	\end{enumerate} 
	where $\alpha^{\xi_{n,\cdot}} (\cdot)$ denotes the strong mixing coefficient corresponding to the $n$th row in the array.
	
	Then the distributions $\law{d_n^{-1/2}S_n}$ and $\mathcal{N}(0,\var (d_n^{-1/2}S_n))$ are \emph{weakly approaching}, that is for any bounded continuous function $g:\R\to\R$,
	$$
	\E \left[g\left(d_n^{-1/2}S_n\right)\right] - 
	\int_{\R} g\left(\var (d_n^{-1/2}S_n)^{1/2}t\right)\frac{1}{\sqrt{2\pi}}e^{-\frac{t^2}{2}}\,\dint t
	\to 0\,\,\text{as}\,\,n\to\infty.
	$$
\end{theorem}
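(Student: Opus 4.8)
\emph{Proof strategy.} The plan is to prove Theorem~\ref{thm:Ekstrom} by the Bernstein big-block--small-block method, carried out row by row, comparing characteristic functions, while using the ``weakly approaching'' formulation to circumvent any hypothesis on $\lim_n\var(d_n^{-1/2}S_n)$. Write $v_n:=\var(d_n^{-1/2}S_n)$ and $\alpha_n(\cdot):=\alpha^{\xi_{n,\cdot}}(\cdot)$; we may assume $d_n\to\infty$, this being the relevant case. First I would record the a priori bound $\sup_n v_n<\infty$: in $v_n=d_n^{-1}\sum_{i,j}\cov(\xi_{n,i},\xi_{n,j})$ hypothesis (i) controls the $d_n$ diagonal terms, while Davydov's covariance inequality gives $|\cov(\xi_{n,i},\xi_{n,j})|\le C\alpha_n(|i-j|)^{r/(2+r)}$, and $\sum_{k\ge 1}\alpha_n(k)^{r/(2+r)}\le\sum_{k\ge 0}(k+1)^2\alpha_n(k)^{r/(4+r)}<\infty$ uniformly in $n$ by (ii) (as $r/(2+r)>r/(4+r)$ and $\alpha_n\le 1$). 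Hence $(\law{d_n^{-1/2}S_n})_n$ is tight. Since two laws are weakly approaching exactly when, along every subsequence, convergence in distribution of either one forces that of the other to the same limit, and since $\eta_n\sim\mathcal{N}(0,v_n)$ with $v_n$ bounded (so that along a sub-subsequence $v_{n_k}\to v$ and $\eta_{n_k}\Rightarrow\mathcal{N}(0,v)$), L\'evy's continuity theorem reduces everything to: for each fixed $t\in\R$,
\begin{equation}\label{eq:ekstrom-cf}
\E\!\left[\exp\!\big(\mathrm{i}\,t\,d_n^{-1/2}S_n\big)\right]-\exp\!\big(-\tfrac12 t^2 v_n\big)\longrightarrow 0,\qquad n\to\infty.
\end{equation}

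\emph{Blocking.} Fix $t$ and choose block lengths $p_n$ (long) and $q_n$ (short) with $q_n\to\infty$, $q_n/p_n\to 0$, $p_n/d_n\to 0$, and $q_n$ so large that $k_n\alpha_n(q_n)\to 0$, where $k_n\asymp d_n/(p_n+q_n)$ is the number of blocks; this is possible because (ii) forces $\alpha_n(k)$ to decay faster than any fixed negative power of $k$, uniformly in $n$, so e.g.\ $p_n\sim d_n^{2/3}$, $q_n\sim d_n^{1/3}$ works. Splitting $\{1,\dots,d_n\}$ into alternating long and short blocks gives $S_n=\sum_j U_{n,j}+\sum_j V_{n,j}$, with $U_{n,j}$ and $V_{n,j}$ the partial sums over the $j$-th long and short block. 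The argument then rests on:
\begin{enumerate}[(a)]
\item (short blocks negligible) $d_n^{-1}\var\big(\sum_j V_{n,j}\big)\to 0$, from Davydov's inequality, $\sum_j\var(V_{n,j})\lesssim k_n q_n/d_n\to 0$, and summability of the cross-covariances;
\item (decoupling) $\big|\E\exp(\mathrm{i}t d_n^{-1/2}\sum_j U_{n,j})-\prod_j\E\exp(\mathrm{i}t d_n^{-1/2}U_{n,j})\big|\le C k_n\alpha_n(q_n)\to 0$, by iterating the standard $\alpha$-mixing bound for covariances of unit-modulus functionals of blocks separated by $q_n$;
\item (variances match) $d_n^{-1}\big|\sum_j\var(U_{n,j})-\var(S_n)\big|\to 0$, from (a) and a bound on $\sum_{j\ne l}|\cov(U_{n,j},U_{n,l})|$, where the weighted summability in (ii) is what tames the sum over non-adjacent block pairs.
\end{enumerate}

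\emph{Assembling.} Using $|e^{\mathrm{i}a}-e^{\mathrm{i}b}|\le|a-b|$ and Cauchy--Schwarz, (a) lets one drop $\sum_j V_{n,j}$ inside the exponential up to $o(1)$, and then (b) replaces the left side of \eqref{eq:ekstrom-cf} by $\prod_j\E\exp(\mathrm{i}t d_n^{-1/2}U_{n,j})-\exp(-\tfrac12 t^2 v_n)$ up to $o(1)$. The variables $(U_{n,j}/\sqrt{d_n})_j$ are independent, centered, with $\max_j d_n^{-1}\var(U_{n,j})\lesssim p_n/d_n\to 0$ and, by a Rosenthal-type moment bound for $\alpha$-mixing sums whose hypotheses are furnished by (i)--(ii), $\sum_j d_n^{-(2+r')/2}\E|U_{n,j}|^{2+r'}\lesssim (p_n/d_n)^{r'/2}\to 0$ with $r'=\min\{r,1\}$; a routine Lindeberg/Taylor expansion of each factor then yields $\big|\prod_j\E\exp(\mathrm{i}t d_n^{-1/2}U_{n,j})-\exp(-\tfrac12 t^2 d_n^{-1}\sum_j\var(U_{n,j}))\big|\to 0$, and (c) replaces $d_n^{-1}\sum_j\var(U_{n,j})$ by $v_n$ in the exponent up to $o(1)$. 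This gives \eqref{eq:ekstrom-cf}, hence the theorem.

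\emph{Main obstacle.} The technical heart is the simultaneous calibration of the two block scales so that every error term in (a)--(c) and in the Lindeberg step decays, and above all the derivation of the Rosenthal-type moment bound for the long-block sums $\E|U_{n,j}|^{2+r'}$ together with the summable control of \emph{all} pairwise long-block (and short-block) covariances using \emph{precisely} the summability in hypothesis (ii). The exponent $r/(4+r)$ and the quadratic weight $(k+1)^2$ there are considerably stronger than the bare $\sum_k\alpha_n(k)^{r/(2+r)}<\infty$ needed merely for $\sup_n v_n<\infty$, and this surplus is exactly what is consumed by the block-moment inequalities and the higher-order covariance corrections entering the variance reconciliation; carrying these estimates through uniformly in the row index $n$ is where essentially all the effort lies.
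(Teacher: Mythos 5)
The paper does not prove this statement at all: it is quoted verbatim from Ekstr\"om's 2014 paper, and the proof environment consists of the single line ``For the proof, we refer the reader to \cite{ekstrom2014general}.'' So there is no in-paper argument to compare yours against; your attempt has to stand on its own, and as written it is a roadmap rather than a proof. The overall architecture (tightness from Davydov's inequality, reduction of ``weakly approaching'' to pointwise vanishing of the difference of characteristic functions, Bernstein big-block/small-block decomposition, decoupling via the mixing coefficient, Lindeberg for the independent-ized long blocks) is the classical and correct route, and your use of the weakly-approaching formulation to avoid any assumption on $\lim_n\var(d_n^{-1/2}S_n)$ is exactly the right move. But every one of the steps (a)--(c), the Rosenthal-type bound $\E|U_{n,j}|^{2+r'}\lesssim p_n^{(2+r')/2}$ uniformly in $n$, and the variance reconciliation are asserted with ``$\lesssim$'' and ``routine'' rather than proved; you yourself identify these as the ``technical heart,'' which means the heart of the proof is missing. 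In particular, Rosenthal-type moment inequalities for $\alpha$-mixing sums are genuinely delicate (they require specific trade-offs between the moment order and the mixing rate), and verifying that hypotheses (i)--(ii) furnish them \emph{uniformly over the row index} is precisely where condition (ii), with its weight $(k+1)^2$ and exponent $r/(4+r)$, is consumed.

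There is also one concrete false claim: condition (ii) does \emph{not} force $\alpha_n(k)$ to ``decay faster than any fixed negative power of $k$.'' It only gives the uniform polynomial bound $\alpha_n(k)\le C(k+1)^{-2(4+r)/r}$ (from $(k+1)^2\alpha_n(k)^{r/(4+r)}\le C$). Your block choice $p_n\sim d_n^{2/3}$, $q_n\sim d_n^{1/3}$ still satisfies $k_n\alpha_n(q_n)\to 0$, since $d_n^{1/3}\cdot d_n^{-2(4+r)/(3r)}=d_n^{-(r+8)/(3r)}\to 0$, but the justification you give is wrong and should be replaced by this explicit computation. A smaller point: you should state $d_n\to\infty$ as a standing hypothesis rather than an assumption made ``without loss,'' since for bounded $d_n$ the conclusion is generally false.
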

\begin{proof}
	For the proof, we refer the reader to \cite{ekstrom2014general}.
\end{proof}

\begin{corollary}\label{cor:ekstrom}
	Let $(W_n)_{n\in\N}$ be a sequence of $\R$-valued zero mean random variables.
	Suppose there exists $r>0$ such that $\sup_{n\in\N}\E |W_n|^{2+r}<\infty$ and $\sum_{k=0}^{\infty}(k+1)^2 \left(\alpha^{W} (k)\right)^{\frac{r}{4+r}}<\infty$ holds.
	Under these conditions, the distributions of $n^{-1/2}S_n$ and $\mathcal{N}(0,\var(n^{-1/2}S_n))$ are weakly approaching.
	
	Moreover, if the sequence $(n/\sigma_n^2)_{n\ge 1}$ is bounded, where $\sigma_n^2=\E S_n^2$, $n\in\N$, then $\law{S_n/\sigma_n}$ converges weakly to the standard normal distribution.
\end{corollary}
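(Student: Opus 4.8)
The first assertion is a direct instantiation of Theorem~\ref{thm:Ekstrom}. The plan is to take $d_n:=n$ and $\xi_{n,i}:=W_i$ for $1\le i\le n$, so that $S_n=\sum_{k=1}^{d_n}\xi_{n,k}$ agrees with the given partial sum and each row has zero mean. Hypothesis~(i) is immediate from $\sup_{n\in\N}\E|W_n|^{2+r}<\infty$. For hypothesis~(ii), I would note that every pair of past/future $\sigma$-algebras generated by a sub-block of the finite row $(W_1,\dots,W_n)$ is contained in a pair of the form $\bigl(\F_{-\infty,j}^W,\F_{j+k,\infty}^W\bigr)$, so the strong mixing coefficient of the $n$th row satisfies $\alpha^{\xi_{n,\cdot}}(k)\le\alpha^W(k)$ for all $n,k$; consequently $\sup_{n}\sum_{k\ge 0}(k+1)^2\bigl(\alpha^{\xi_{n,\cdot}}(k)\bigr)^{r/(4+r)}\le\sum_{k\ge 0}(k+1)^2\bigl(\alpha^W(k)\bigr)^{r/(4+r)}<\infty$. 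Since $\E W_n=0$ forces $\var(n^{-1/2}S_n)=n^{-1}\E S_n^2=\sigma_n^2/n$, Theorem~\ref{thm:Ekstrom} delivers the weak-approaching statement directly.

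For the ``moreover'' part, write $c_n:=\sqrt n/\sigma_n$, so that $S_n/\sigma_n=c_n\,n^{-1/2}S_n$ and $\mathcal N(0,\var(n^{-1/2}S_n))=\mathcal N(0,c_n^{-2})$. The boundedness hypothesis is precisely $\sup_n c_n^2=\sup_n n/\sigma_n^2<\infty$; I would complement it with the lower bound $\inf_n c_n>0$, i.e.\ $\sigma_n^2=O(n)$. This follows from the classical covariance inequality for $\alpha$-mixing random variables (see, e.g., \cite{Bradley2005}): there is a universal constant $C$ with $|\cov(W_i,W_j)|\le C\bigl(\alpha^W(|i-j|)\bigr)^{r/(2+r)}\sup_n\|W_n\|_{2+r}^2$, hence $\sigma_n^2=\E S_n^2=\sum_{i,j=1}^n\cov(W_i,W_j)\le C' n\sum_{m\ge 0}\bigl(\alpha^W(m)\bigr)^{r/(2+r)}$, and the last sum is finite because $\alpha^W(m)\le 1$ and $r/(2+r)\ge r/(4+r)$ give $\bigl(\alpha^W(m)\bigr)^{r/(2+r)}\le\bigl(\alpha^W(m)\bigr)^{r/(4+r)}\le(m+1)^2\bigl(\alpha^W(m)\bigr)^{r/(4+r)}$, which is summable by assumption. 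Thus $c_n\in[a,b]$ for some $0<a\le b<\infty$, and in particular $\E|n^{-1/2}S_n|\le\var(n^{-1/2}S_n)^{1/2}=c_n^{-1}\le a^{-1}$.

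I would then conclude by a subsequence argument. By the portmanteau theorem it suffices to prove $\E h(S_n/\sigma_n)\to\E h(Z)$ for every bounded Lipschitz $h$ (with Lipschitz constant $L$), where $Z\sim\mathcal N(0,1)$. Given an arbitrary subsequence, extract a further subsequence $(n_k)$ with $c_{n_k}\to c_*\in[a,b]$. Writing $U_n:=n^{-1/2}S_n$ and letting $V_n\sim\mathcal N(0,c_n^{-2})$, decompose
\[
\bigl|\E h(c_{n_k}U_{n_k})-\E h(Z)\bigr|\le\bigl|\E h(c_{n_k}U_{n_k})-\E h(c_*U_{n_k})\bigr|+\bigl|\E h(c_*U_{n_k})-\E h(c_*V_{n_k})\bigr|+\bigl|\E h(c_*V_{n_k})-\E h(Z)\bigr|.
\]
The first term is at most $L|c_{n_k}-c_*|\,\E|U_{n_k}|\le(L/a)|c_{n_k}-c_*|\to 0$; the second tends to $0$ by the already-established weak-approaching statement applied to the \emph{fixed} bounded continuous test function $x\mapsto h(c_*x)$, using $\var(U_{n_k})^{1/2}=c_{n_k}^{-1}$ so that $c_*V_{n_k}\stackrel{d}{=}c_*c_{n_k}^{-1}Z$; and the third equals $\bigl|\E h(c_*c_{n_k}^{-1}Z)-\E h(Z)\bigr|\to 0$ by dominated convergence, since $c_*c_{n_k}^{-1}\to 1$ and $h$ is bounded and continuous. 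Hence $\E h(c_{n_k}U_{n_k})\to\E h(Z)$ along $(n_k)$; as every subsequence admits such a further subsequence, $\E h(S_n/\sigma_n)\to\E h(Z)$, which is the asserted weak convergence to $\mathcal N(0,1)$.

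\textbf{Main obstacle.} The weak-approaching conclusion of Theorem~\ref{thm:Ekstrom} is stated for a \emph{fixed} test function, whereas normalizing by $\sigma_n$ rather than by $\sqrt n$ rescales the argument by the $n$-dependent factor $c_n$. Bridging this gap requires (a) confining $c_n$ to a compact subinterval of $(0,\infty)$ --- the upper bound is the hypothesis, but the lower bound $\sigma_n^2=O(n)$ is genuinely extra information that must be extracted from the covariance inequality under the stated moment and mixing-rate conditions --- and (b) the uniform $L^1$ bound on $n^{-1/2}S_n$ to absorb $h(c_nx)-h(c_*x)$ on the tail, where it is not small pointwise. The remaining steps are routine.
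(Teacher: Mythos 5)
Your proof is correct, but the second part takes a genuinely different route from the paper. For the ``moreover'' part the paper simply re-applies Theorem~\ref{thm:Ekstrom} to the rescaled array $\xi_{n,i}=\frac{\sqrt{n}}{\sigma_n}W_i$: then $d_n^{-1/2}\sum_{k=1}^{n}\xi_{n,k}=S_n/\sigma_n$ has variance exactly $1$, so the weak-approaching conclusion of the theorem is \emph{literally} weak convergence to $\mathcal{N}(0,1)$, and the only thing to check is hypothesis~(i), namely $\E|\xi_{n,i}|^{2+r}\le (n/\sigma_n^2)^{1+r/2}\E|W_i|^{2+r}$, which is where the boundedness of $n/\sigma_n^2$ enters (the mixing coefficients are unchanged by deterministic scaling). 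You instead derive the CLT from the already-established weak-approaching statement for $n^{-1/2}S_n$ via a compactness/subsequence argument in the scaling factor $c_n=\sqrt{n}/\sigma_n$. This forces you to supply the extra ingredient $\sigma_n^2=O(n)$ (via the Davydov covariance inequality, which does hold under the stated moment and mixing hypotheses), plus the uniform $L^1$ bound on $n^{-1/2}S_n$ and the three-term decomposition with a fixed test function $x\mapsto h(c_*x)$ --- all of which you execute correctly. The paper's rescaling trick is a one-liner and avoids needing the lower bound on $c_n$ altogether; your argument is longer but makes transparent exactly which quantitative control on $\sigma_n^2$ is needed to pass from a fixed normalization $\sqrt n$ to the self-normalization $\sigma_n$, and it would survive even if one only knew the weak-approaching statement as a black box rather than the underlying array theorem.
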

\begin{proof}
	The first part of the statement is immediately follows from Theorem \ref{thm:Ekstrom} with
	$\xi_{n,i}=\frac{\sqrt{n}}{\sigma_n}W_i$, $1\le i\le d_n=n$, $n\ge 1$, since $\xi_{n,\cdot}$ and $W$ have the same $\alpha$-mixing coefficient for every $n$.
	
	As for the second part, let $\xi_{n,i}=\frac{\sqrt{n}}{\sigma_n}W_i$, $1\le i\le n$, $n\ge 1$. Easily seen that $\max_{1\le i\le n}\E |\xi_{n,i}|^{2+r}\le (n/\sigma_n^2)^{1+r/2}\E |W_i|^{2+r}$ hence by Theorem \ref{thm:Ekstrom}, for any bounded continuous function $g:\R\to\R$,
	$$
	\E \left[g\left(n^{-1/2}\sum_{k=1}^{n}\xi_{n,k}\right)\right] - 
	\int_{\R} g (t)\frac{1}{\sqrt{2\pi}}e^{-\frac{t^2}{2}}\,\dint t
	\to 0,\,\,\text{as}\,\,n\to\infty,
	$$
	which completes the proof.
\end{proof}

The next important remark in its original form can be found in Ekstr\"om's paper (c.f. Remark 1 on page 1 in \cite{ekstrom2014general}).
\begin{remark}\label{rem:ekstromremark}
	Assume that for $r>2$, $\sup_{n\in\N}\E |W_n|^{r}<\infty$ and $\sum_{k=0}^{\infty} \left(\alpha^{W} (k)\right)^{1-\frac{2}{r}}<\infty$. Then the variance of $n^{-1/2}S_n$ is bounded. 
\end{remark}

\section{Counterexample to long-term contractivity condition}\label{ap:Felsmann}

In this point we present an example for a stationary stochastic process $(Y_n)_{n\in\N}$ and a function $\ga:\Y\to (0,\infty)$, outlined in Balázs Felsmann's Master's thesis, where $\E (\ga (Y_0))<1$, and despite the favorable mixing properties of $(Y_n)_{n\in\N}$, the long-term contractivity condition \eqref{eq:LT} fails to hold.

Let $(Z_n)_{n\in\Z}$ be a sequence of i.i.d. Bernoulli variables with $\P (Z_0=0)=\P (Z_0=1)=1/2$, and define the process 
$$
Y_n = Z_n + Z_{n-1},\,\, n\in\Z,
$$ 
which takes its values in $\Y=\{0,1,2\}$. Clearly, $(Y_n)_{n\in\N}$ is a stationary process such that $Y_n$ and $Y_m$ are independent for $|m-n|>1$ hence $\alpha^Y (n)=0$ for $n\ge 2$. We consider a function $\ga:\Y\to (0,1)$, where $\ga (i)=\ga_i$, $i=0,1,2$ will be specified later.
Let us introduce the following sequences. 
\begin{equation}\label{eq:anbncn}
	a_n = \E\left(\prod_{k=1}^{n}\ga (Y_k)\right),\,\,
	b_n = \E\left(\ind_{\{Z_n=0\}}\prod_{k=1}^{n}\ga (Y_k)\right),\,\,
	c_n = \E\left(\ind_{\{Z_n=1\}}\prod_{k=1}^{n}\ga (Y_k)\right),\,\, n\in\N.
\end{equation}
Clearly, we have $a_n=b_n+c_n$, $n\in\N$, moreover we can write
\begin{align*}
 b_{n+1} &= \E\left(\ind_{\{Z_{n+1}=0\}}\ind_{\{Z_n=0\}}\prod_{k=1}^{n+1}\ga (Y_k)\right) + \E\left(\ind_{\{Z_{n+1}=0\}}\ind_{\{Z_n=1\}}\prod_{k=1}^{n+1}\ga (Y_k)\right) \\
 &=\frac{1}{2}(\ga_0 b_n+\ga_1 c_n),
\end{align*}
and similarly
\begin{align*}
c_{n+1} &= \E\left(\ind_{\{Z_{n+1}=1\}}\ind_{\{Z_n=0\}}\prod_{k=1}^{n+1}\ga (Y_k)\right) + \E\left(\ind_{\{Z_{n+1}=1\}}\ind_{\{Z_n=1\}}\prod_{k=1}^{n+1}\ga (Y_k)\right) \\
&=\frac{1}{2}(\ga_1 b_n+\ga_2 c_n),
\end{align*}
hence the linear recursion
\begin{equation}\label{eq:rec_abc}
\left[\begin{array}{c}
b_{n+1}\\
c_{n+1}
\end{array}\right] = \frac{1}{2}
\Gamma
\left[\begin{array}{c}
b_{n}\\
c_{n}
\end{array}\right]
\end{equation}
holds, where $\Gamma=\left[
\begin{array}{cc}
\ga_0 & \ga_1 \\
\ga_1 & \ga_2
\end{array}
\right]$, and $b_0=c_0=1/2$. Thus for $(a_n)_{n\in\N}$, we have
$$
a_n = \frac{1}{2^{n+1}}[1,\, 1]\Gamma^n\left[\begin{array}{c}
1\\
1
\end{array}\right].
$$
By setting $\ga_0=3$, $\ga_1=\ga_2=0$, we obtain $a_0=1$ and $a_n=\frac{1}{2}(3/2)^n$, for $n\ge 1$, thus $a_n\to\infty$, as $n\to\infty$. For $\eps\in (0,1/4)$, let us define $\ga_\eps:\Y\to (0,\infty)$, $\ga_\eps (y)=\ga (y)+\eps$, $y=0,1,2$. So, we have $\E (\ga_{\eps} (Y_0))=3/4+\eps<1$, on the other hand
\begin{align*}
	\E^{1/n}\left(\prod_{k=1}^{n}\ga_\eps (Y_k)\right)\ge
	\E^{1/n}\left(\prod_{k=1}^{n}\ga (Y_k)\right)=\frac{3}{2}2^{-1/n},\,\,n\ge 1,
\end{align*}
and thus
$$
\liminf_{n\to\infty} \E^{1/n}\left(\prod_{k=1}^{n}\ga_\eps (Y_k)\right)\ge \frac{3}{2}
$$
which means that the long-term contractivity condition fails to hold in this situation.

\section{Coupling condition for MCREs}\label{ap:CouplingCondition}

In this section, our primary objective is to establish an upper bound for the non-coupling probabilities as in Definition \ref{def:coupling}. Our approach refines the proof of Lemma 3.10 in \cite{lovasCLT}. To achieve this, we introduce the following lemma, which is identical to Lemma 7.4 in \cite{lovas}. It describes the consequences of the drift condition satisfied by $Q(y_{k-1})\ldots Q(y_l)$, where $\mathbf{y}\in\Y^\N$ and $k,l\in\N$, $l<k$ are arbitrary and fixed.
\begin{lemma}\label{lem:ita} % Iterated action of Q(y_0),...,Q(y_n)
	For $x\in\X$, $\mathbf{y}\in\Y^\N$ and $k,l\in\N$, $l<k$, we have	
	\begin{align*}
	\left[Q(y_{k-1})\ldots Q(y_{l}) V\right](x) &\le 
	V(x)\,\prod_{r=l}^{k-1}
	\ga (y_r)
	+ \sum_{r=l}^{k-1}K(y_r)\prod_{j=r+1}^{k-1}
	\ga (y_j).
	\end{align*}
\end{lemma}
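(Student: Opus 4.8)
The plan is to establish the estimate by induction on the block length $m:=k-l\ge 1$, iterating the drift condition \eqref{eq:Lyapunov} one kernel at a time. For the base case $m=1$ (i.e.\ $k=l+1$) the asserted inequality is precisely \eqref{eq:Lyapunov} with $y=y_l$, because the single factor $\prod_{r=l}^{l}\ga(y_r)$ equals $\ga(y_l)$ and the single summand $K(y_l)\prod_{j=l+1}^{l}\ga(y_j)$ equals $K(y_l)$ by the empty-product convention, so the right-hand side reduces to $\ga(y_l)V(x)+K(y_l)$.

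For the inductive step I would assume the bound for all blocks of length $m-1$ and set $T:=Q(y_{k-2})\cdots Q(y_l)$, so that $[Q(y_{k-1})\cdots Q(y_l)V](x)=[T(Q(y_{k-1})V)](x)$ (applying $Q(y_{k-1})$ first). Using \eqref{eq:Lyapunov} with $y=y_{k-1}$ gives the pointwise inequality $Q(y_{k-1})V\le\ga(y_{k-1})V+K(y_{k-1})$ on $\X$; since $T$ is a composition of probability kernels, it is monotone and satisfies $T(aV+b)=a\,(TV)+b$ for constants $a,b\ge 0$, hence
\[
[Q(y_{k-1})\cdots Q(y_l)V](x)\le\ga(y_{k-1})\,[Q(y_{k-2})\cdots Q(y_l)V](x)+K(y_{k-1}).
\]
Inserting the induction hypothesis for the length-$(m-1)$ block $Q(y_{k-2})\cdots Q(y_l)$ and collecting terms then gives exactly the claimed bound: the coefficient of $V(x)$ becomes $\ga(y_{k-1})\prod_{r=l}^{k-2}\ga(y_r)=\prod_{r=l}^{k-1}\ga(y_r)$, and the additive constant becomes $\ga(y_{k-1})\sum_{r=l}^{k-2}K(y_r)\prod_{j=r+1}^{k-2}\ga(y_j)+K(y_{k-1})=\sum_{r=l}^{k-1}K(y_r)\prod_{j=r+1}^{k-1}\ga(y_j)$, where the extra $K(y_{k-1})$ is the $r=k-1$ term with its empty product.

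I do not anticipate any real obstacle: the whole content is that applying the drift inequality once to a bound of the form $aV+b$ returns a bound of the same form with $a\mapsto\ga a$ and $b\mapsto\ga b+K$, and the lemma is just the iteration of this over the $k-l$ indices. The only points that need attention are the index bookkeeping in the final collection step and the orientation of the iteration (one must iterate the block starting from $y_l$); equivalently, one may read $[Q(y_{k-1})\cdots Q(y_l)V](x)$ probabilistically as $\E\big[V(\xi_{k-l})\big]$ for the chain started at $x$ and driven successively by the environments $y_l,\dots,y_{k-1}$, and simply unroll \eqref{eq:Lyapunov} one transition at a time --- the argument used for Lemma 7.4 of \cite{lovas}.
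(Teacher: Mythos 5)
Your proof is correct and follows essentially the same route as the paper: induction on the block length, with the base case being the drift condition \eqref{eq:Lyapunov} itself and the inductive step peeling off one kernel using its monotonicity, linearity, and preservation of constants. Your bookkeeping of which kernel is innermost and the explicit collection of the $\ga$- and $K$-terms is if anything cleaner than the paper's displayed computation, but the underlying argument is identical.
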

\begin{proof}	
	We proceed by induction. Let $x \in \X$ and $l \in \N$ be arbitrary but fixed. For the base case $k = l+1$, we have
	\begin{equation}
		\left[Q(y_{l})V\right](x) \le \ga (y_l)V(x) + K(y_l),
	\end{equation}
	which follows directly from the drift condition \eqref{eq:Lyapunov}.
	
	\medskip\noindent
	\emph{Induction hypothesis:} Assume that the inequality holds for some $k > l$, with fixed $x \in \X$ and $l \in \N$:
	\begin{equation}\label{eq:ind:hypo}
		\left[Q(y_{k-1})\ldots Q(y_{l}) V\right](x) \le 
		V(x)\,\prod_{r=l}^{k-1}
		\ga (y_r)
		+ \sum_{r=l}^{k-1}K(y_r)\prod_{j=r+1}^{k-1}
		\ga (y_j).
	\end{equation}
	
	\medskip\noindent
	\emph{Induction step:} We verify that inequality \eqref{eq:ind:hypo} holds for $k+1$. By the drift property for $Q(y_k)$ we have
	$$
	[Q(y_k)V](x)\le \ga (y_k)V(x)+K(y_k).
	$$
	
	Operators $V\mapsto [Q(y)V]$, $y\in\Y$ are linear, monotone and for $V\equiv 1$ $[Q(y)V]\equiv 1$, $y\in\Y$. Therefore, taking into account that successive applications of kernels are evaluated in reverse order (see the remark following Definition \ref{def:act}), we can write
	\begin{align*}
		\left[Q(y_{k})\ldots Q(y_{l}) V\right](x) &=
			\left[Q(y_{k-1})\ldots Q(y_{l}) [Q(y_{k}) V]\right](x)
			\\
			&\le \ga (y_k) \left[Q(y_{k-1})\ldots Q(y_{l}) V\right](x)+K(y_k),
	\end{align*}
	thus by applying the induction hypothesis \eqref{eq:ind:hypo}, we obtain
	\begin{align*}
		\left[Q(y_{k})\ldots Q(y_{l}) V\right](x) 
		&\le
		\ga (y_k)\left[
		V(x)\,\prod_{r=l}^{k-1}
		\ga (y_r)
		+ \sum_{r=l}^{k-1}K(y_r)\prod_{j=r+1}^{k-1}
		\ga (y_j)
		\right]+K(y_k)
		\\
		&=
		V(x)\,\prod_{r=l}^{k}
		\ga (y_r)
		+ \sum_{r=l}^{k}K(y_r)\prod_{j=r+1}^{k}
		\ga (y_j)
	\end{align*}
	which completes the proof.
\end{proof}

In our earlier papers (See Lemma 7.1 in \cite{lovas} and Lemma 3.9 in \cite{lovasCLT}), we relied on special cases of the following lemma. It is also a variant of Lemma 6.1 in \cite{rasonyi2018}, albeit in a somewhat broader context. Such representations of parametric kernels satisfying the minorization condition \eqref{eq:smallset} can be deemed standard. For the sake of completeness and to ensure self-containment of our explanation, we present and prove it in its most general form.
\begin{lemma}\label{lem:T}
	Let $R>0$ be arbitrary and suppose the parametric kernel $Q:\Y\times\X\times\B(\X)\to [0,1]$ satisfies the minorization condition given by \eqref{eq:smallset}. Then there exists a measurable mapping $T^R:\X\times\Y\times [0,1]\to\X$ such that
	$$
	Q(y,x,A)=\int_{[0,1]}\ind_{T^R (x,y,u)\in A}\,\dint u,
	$$
	for all $x\in\X$, $A\in\B(\X)$ and $y\in\Y$. 
	Furthermore, for any fixed $y\in\Y$, there exists a Borel set $U=U (y)\in \B ([0,1])$
	with Lebesgue measure
	$
	\leb_1 (U)\ge 1-\beta (R,y)
	$
	such that for $u\in U$,
	\begin{equation}\label{eq:Tconst}
	T^R (x_1,y,u) = T^R (x_2,y,u),\,\,x_1,x_2\in\stackrel{-1}{V}([0,R]).
	\end{equation}
\end{lemma}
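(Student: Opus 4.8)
The plan is to construct $T^R$ by combining the classical functional (``Skorokhod-type'') representation of probability kernels with the Nummelin-type splitting supplied by the minorization condition \eqref{eq:smallset}.

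First I would recall the standard fact that any probability kernel from a measurable space into a Polish space admits a jointly measurable functional representation: there is a measurable map $\rho$ of the parameter and an auxiliary variable $u\in[0,1]$ such that $\rho(\text{parameter},\cdot)$ pushes the Lebesgue measure on $[0,1]$ onto the kernel. (One transports $\X$ onto a Borel subset of $[0,1]$ via a Borel isomorphism and then inverts the conditional c.d.f.; the only delicate point is joint measurability of the generalized inverse, which follows from right-continuity and monotonicity of $t\mapsto F(\text{parameter},t)$.) For $Q$ itself such a representation is in fact already given, namely the function $f$ from \eqref{eq:iter}. Applying the representation result to the minorizing kernel $\kappa_R$ yields a measurable $h:\Y\times[0,1]\to\X$ with $\int_{[0,1]}\ind_{h(y,u)\in A}\,\dint u=\kappa_R(y,A)$ for all $y\in\Y$, $A\in\B(\X)$.

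Next I would split $Q$ on the level set $\stackrel{-1}{V}([0,R])$. Writing $\beta_y:=\beta(R,y)\in[0,1)$, define the residual kernel by $\tilde Q(y,x,\cdot):=\bigl(Q(y,x,\cdot)-(1-\beta_y)\kappa_R(y,\cdot)\bigr)/\beta_y$ when $x\in\stackrel{-1}{V}([0,R])$ and $\beta_y>0$, and $\tilde Q(y,x,\cdot):=\kappa_R(y,\cdot)$ otherwise; by \eqref{eq:smallset} the numerator is a non-negative measure of total mass $\beta_y$, so $\tilde Q$ is a probability kernel, and it is jointly measurable since it is glued from probability kernels along a measurable partition of $\Y\times\X$ (division by the measurable, strictly positive function $y\mapsto\beta_y$ on $\{\beta_\cdot>0\}$ preserves measurability). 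By construction $Q(y,x,\cdot)=(1-\beta_y)\kappa_R(y,\cdot)+\beta_y\tilde Q(y,x,\cdot)$ on $\stackrel{-1}{V}([0,R])$, and a second application of the representation result gives a measurable $\tilde g$ representing $\tilde Q$. Finally I would assemble $T^R$ piecewise: $T^R(x,y,u):=g(x,y,u)$ for $x\notin\stackrel{-1}{V}([0,R])$, where $g$ is any representation of $Q$ (e.g.\ $f$); $T^R(x,y,u):=h\bigl(y,u/(1-\beta_y)\bigr)$ for $x\in\stackrel{-1}{V}([0,R])$, $u<1-\beta_y$; $T^R(x,y,u):=\tilde g\bigl(x,y,(u-1+\beta_y)/\beta_y\bigr)$ for $x\in\stackrel{-1}{V}([0,R])$, $\beta_y>0$, $u\ge 1-\beta_y$; and an arbitrary fixed value on the remaining set (a single point in the $u$-variable). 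These pieces form a measurable partition, so $T^R$ is measurable; a change of variables on each subinterval gives $\int_{[0,1]}\ind_{T^R(x,y,u)\in A}\,\dint u=(1-\beta_y)\kappa_R(y,A)+\beta_y\tilde Q(y,x,A)=Q(y,x,A)$; and with $U:=[0,1-\beta_y)$ one has $\leb_1(U)=1-\beta_y\ge 1-\beta(R,y)$, while for $u\in U$ the value $h\bigl(y,u/(1-\beta_y)\bigr)$ does not involve $x$, which is exactly \eqref{eq:Tconst}.

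The main obstacle is the joint-measurability bookkeeping: establishing (or carefully citing) the functional representation of kernels with measurable dependence on the parameter, and then verifying that the rescalings by the measurable, possibly vanishing function $y\mapsto\beta_y$ keep $\tilde Q$ and $T^R$ measurable — this is what forces the degenerate case $\beta_y=0$ (where $Q(y,x,\cdot)=\kappa_R(y,\cdot)$ is already $x$-independent on $\stackrel{-1}{V}([0,R])$) to be carried along separately. The remaining ingredients — the algebra of the splitting and the change-of-variables check of the pushforward identity — are routine.
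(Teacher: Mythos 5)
Your proposal is correct and follows essentially the same route as the paper: a Nummelin-type splitting of $Q$ into $(1-\beta)\kappa_R+\beta\,q_R$ on the level set $\stackrel{-1}{V}([0,R])$, quantile-function (pseudoinverse) representations of each kernel after reducing to $\X=\R$ by Borel isomorphism, a separate treatment of the degenerate case $\beta(R,y)=0$, and a piecewise assembly in which the $x$-independent branch is carried by the minorizing kernel $\kappa_R$. The only cosmetic difference is in how the single uniform variable drives the mixture: the paper maps $u$ measurably onto two independent uniforms via $\chi:[0,1]\to[0,1]^2$ (one coordinate as the coin, one as the quantile seed), whereas you partition $[0,1]$ into the $y$-dependent subintervals $[0,1-\beta_y)$ and $[1-\beta_y,1]$ and rescale within each — both give the same pushforward and a set $U$ of Lebesgue measure $1-\beta(R,y)$ on which \eqref{eq:Tconst} holds.
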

\begin{proof}
	We proceed as in Lemma 7.1 in \cite{lovas}, following the proof of Lemma 6.1 in \cite{rasonyi2018}. The case of countable $\X$ is straightforward and thus omitted. For the uncountable case, we can assume, by the Borel isomorphism theorem, that $\X=\R$ and $\B(\X)=\B(\R)$ is the standard Borel $\sigma$-algebra of $\R$.
	
	It is easy to see that if $\beta (R,y)=0$ for some $y\in\Y$ in the minorization condition \eqref{eq:smallset}, then for any $A\in\B (\X)$ and $x\in \stackrel{-1}{V}([0,R])$, both $Q(y,x,A) \ge \ka_R (y,A)$ and $Q(y,x,\X\setminus A) \ge \ka_R (y,\X\setminus A)$ hold simultaneously. Consequently, we have
	\begin{equation*}
	Q(y,\cdot,A)\vert_{\stackrel{-1}{V}([0,R])} = \ka_R (y,A),\,\, A\in\B(\X).
	\end{equation*}
	For $x\in\stackrel{-1}{V}([0,R])$, $A\in\B (\X)=\B(\R)$, and $y\in\Y$ let
\begin{align*}
q_R(y,x,A) := \begin{cases}
\frac{1}{\beta (R, y)}\left[
Q(y,x,A)-(1-\beta (R, y))\ka_R (y,A)
\right] & \text{ if } \beta (R, y)\ne 0\\[1.2em]
0 & \text{ if } \beta (R, y)=0
\end{cases}
\end{align*}
Additionally, introduce the pseudoinverses of the corresponding cumulative distribution functions as follows:
\begin{align*}
Q^{-1}(y,x,z) &:=  \inf \{r\in\Q\mid Q(y,x,(-\infty,r])\ge z\} \\
\ka_R^{-1}(y,z) &:= \inf \{r\in\Q\mid \ka_R (y,(-\infty,r])\ge z\} \\
q_R^{-1}(y,x,z) &:=  \inf \{r\in\Q\mid q_R(y,x,(-\infty,r])\ge z\}.	
\end{align*}
	
There exists a measurable mapping $\chi:[0,1]\to [0,1]^2$ such that the pushforward measure
$\chi_*(\dint x)$ equals $\dint x\dint y$. In other words, for every Borel set $B\in\B ([0,1]^2)$, $\leb_2 (B)=\leb_1 (\stackrel{-1}{\chi}(B))$, where $\leb_k$ refers to the Lebesgue measure on $[0,1]^k$, $k=1,2$.
Finally, we define
\begin{equation*}
T^R (x,y,u) =\begin{cases}
\ind_{\chi (u)_1\ge \beta (R,y)}\ka_R^{-1}(y,\chi (u)_2)
+\ind_{\chi (u)_1<\beta (R,y)} q_R^{-1}(y,x,\chi (u)_2)
& \text{ if } V(x)\le R \\
Q^{-1}(y,x,\chi (u)_2) & \text{ if } V(x)>R.
\end{cases}
\end{equation*}
It is evident that $x\mapsto T^R (x,y,u)$ is constant on $\stackrel{-1}{V}([0,R])$ whenever $\chi (u)_1\ge \beta (R,y)$, implying \eqref{eq:Tconst} with $U = \stackrel{-1}{\chi}([\beta (R,y),1]\times [0,1])$.

Furthermore, for any fixed $r\in\R$, $y\in\Y$ and $x\in\stackrel{-1}{V}([0,R])$
by the change of variable formula and the definitions of $T^R$ and $q_R$, we can write
\begin{align*}
	\int_{[0,1]}\ind_{\{T^R (x,y,u)\le r\}}\,\dint u 
	&=
	\int_0^1 \int_0^1
	\ind_{\{\ind_{s\ge \beta (R,y)}\ka_R^{-1}(y,t)
	+\ind_{s<\beta (R,y)} q_R^{-1}(y,x,t)\le r\}} \dint s \dint t
	\\
	&=
	(1-\beta (R,y))\int_0^1 \ind_{\ka_R^{-1}(y,t)\le r}\dint t +
	\beta (R,y) \int_0^1 \ind_{q_R^{-1}(y,x,t)\le r}\dint t
	\\
	&=
	(1-\beta (R,y))\ka_R (y,(-\infty,r]) + \beta (R,y) q_R(y,x,(-\infty,r])
	\\
	&=Q(y,x, (-\infty,r]).
\end{align*}
Similarly, for $x\notin\stackrel{-1}{V}([0,R])$, we have
\begin{align*}
\int_{[0,1]}\ind_{T^R (x,y,u)\le r}\,\dint u 
&=
\int_0^1 \int_0^1 
\ind_{Q^{-1}(y,x,s)\le r}\dint s\dint t 
= \int_0^1 
\ind_{Q^{-1}(y,x,s)\le r}\dint s = Q(y,x, (-\infty,r]).
\end{align*}
thus the claimed identity holds.

It remains only to show that $T^R$ is measurable with respect to sigma algebras $\B (\R)\otimes\B (\Y)\otimes \B ([0,1])$ and $\B (\R)$. Indeed, $T^R$ is a composition of measurable functions. 
\end{proof}

By Assumption \ref{as:dynamics} B), there exists $0<r<1/\bar{\ga}-1$ such that 
$
\bar{\beta}:=\sup_{y\in\Y} \beta (R(y),y)<1
$
holds with $R(y):=\frac{2K(y)}{r\ga (y)}$.
We define the measurable mapping 
\begin{equation}\label{eq:fgood}
	(x,y,u)\mapsto f(x,y,u):=T^{R(y)}(x,y,u),\,\,
	x\in\X,\,y\in\Y,\,u\in [0,1].
\end{equation}

Let $(\eps_t)_{t\in\N}$ be a sequence of i.i.d. variables uniformly distributed on $[0,1]$ such that sigma algebras $\F^{\eps}_{0,\infty}$ and $\sigma (Y_t, X_t, t\in\N)$ are independent. 
Furthermore, for $s\in\N$ and $x\in\X$, let us introduce the family of auxiliary processes
\begin{equation}\label{eq:Zaux}
Z_{s,t}^{x,\mathbf{y}} = \begin{cases}
x &\text{ if } t\le s \\
f(Z_{s,t-1}^{x,\mathbf{y}},y_{t-1},\eps_t) &\text{ if } t>s,
\end{cases}
\end{equation}
where $\mathbf{y}=(y_0,y_1,\ldots)\in\Y^\N$ can be any fixed trajectory. Clearly, for  $\mathbf{Y}=(Y_n)_{n\in\N}$, the process $Z_{s,t}^{X_s,\mathbf{Y}}$, ${t\ge s}$ is a version of $(X_t)_{t\ge s}$. In the forthcoming part of the section, we will prove that this process satisfies the coupling condition. First, we will show that for any fixed $x\in\N$ and $\mathbf{y}\in\Y^\N$, the process $Z_{s,t}^{x, \mathbf{y}}$, $t\ge s$ is a \emph{Harris recurrent} time-inhomogeneous Markov chain. The next lemma provides a quenched version of the coupling condition, controlling the coupling time between iterations starting from different initial values.

\begin{lemma}\label{lem:coupling}
	Let $x_1,x_2\in\X$ and $\mathbf{y}\in\Y^\N$ be arbitrary but fixed. Then for $0<m<n$, we have
	$$
	\P (Z_{0,n}^{x_1,\mathbf{y}} \ne Z_{0,n}^{x_2,\mathbf{y}})
	\le
	\bar{\beta}^m
	+
	(1+r)^{\lfrf{\frac{n-1}{m}}}\left\{
	\frac{V(x_1)+V(x_2)}{2}\prod_{j=0}^{\lfrf{\frac{n-1}{m}}}\ga (y_j)
	+
	\sum_{k=0}^{m-1}\sum_{l=k}^{k\lfrf{\frac{n-1}{m}}} K(y_l)\prod_{j=1}^{\lfrf{\frac{n-1}{m}}}\ga (y_{l+j})
	\right\}.
	$$
\end{lemma}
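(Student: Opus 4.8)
The plan is a coupling argument in which both copies $Z^{x_1}_{0,\cdot}$ and $Z^{x_2}_{0,\cdot}$ are run with the \emph{same} noise sequence $(\eps_t)_{t\ge1}$, as in \eqref{eq:Zaux} with $f$ given by \eqref{eq:fgood}. Since $f$ is a deterministic map, $Z^{x_1}_{0,t}=Z^{x_2}_{0,t}$ implies $Z^{x_1}_{0,s}=Z^{x_2}_{0,s}$ for all $s\ge t$; hence the meeting time $\tau:=\inf\{t\ge0:\ Z^{x_1}_{0,t}=Z^{x_2}_{0,t}\}$ satisfies $\{Z^{x_1}_{0,n}\neq Z^{x_2}_{0,n}\}=\{\tau>n\}$, so it suffices to bound $\P(\tau>n)$. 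The coupling input is Lemma \ref{lem:T}: for each $t$ there is a Borel set $U_t\subseteq[0,1]$ with $\leb_1(U_t)\ge1-\beta(R(y_t),y_t)\ge1-\bar{\beta}$ on which $u\mapsto T^{R(y_t)}(y_t,\cdot,u)$ is constant over $C_t:=V^{-1}([0,R(y_t)])$; consequently, if $Z^{x_1}_{0,t},Z^{x_2}_{0,t}\in C_t$ and $\eps_{t+1}\in U_t$, then $\tau\le t+1$.

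First I would dispose of the minorization contribution. Let $\sigma_1<\sigma_2<\cdots$ be the successive times at which \emph{both} trajectories lie in the small set, and split
$$\P(\tau>n)\le\P(\tau>n,\ \sigma_m<n)+\P(\sigma_m\ge n).$$
On $\{\tau>n,\ \sigma_m<n\}$ one must have $\eps_{\sigma_j+1}\notin U_{\sigma_j}$ for each $j=1,\dots,m$ (otherwise $\tau\le\sigma_j+1\le n$); since each $\sigma_j$ is a stopping time for the filtration generated by $(\eps_t)$ and $\eps_{\sigma_j+1}$ is independent of it with $\leb_1(U_{\sigma_j})\ge1-\bar{\beta}$, peeling off the factors successively at $\sigma_1,\dots,\sigma_m$ gives $\P(\tau>n,\ \sigma_m<n)\le\bar{\beta}^m$.

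The main work is the term $\P(\sigma_m\ge n)$, the event that strictly fewer than $m$ of the times $t\in\{0,\dots,n-1\}$ have both trajectories in $C_t$. With $L:=\lfloor(n-1)/m\rfloor$ the $m$ disjoint blocks of length $L$ fit inside $\{0,\dots,n-1\}$, so a pigeonhole argument forces one block of $\sim L$ consecutive times, say starting at $l$, on which at \emph{no} time are both trajectories in $C_t$; a union bound over the two trajectories and over the admissible positions of this block (and over the time at which the drift error $K$ is incurred) produces the double sum $\sum_{k=0}^{m-1}\sum_{l=k}^{kL}$. To estimate a single such probability I would use the feature of the calibrated level $R(y)=2K(y)/(r\gamma(y))$: setting $S_t:=V(Z^{x_1}_{0,t})+V(Z^{x_2}_{0,t})$, the event ``not both in $C_t$'' forces $S_t>R(y_t)$, hence $2K(y_t)<r\gamma(y_t)S_t$, and therefore by the drift condition \eqref{eq:Lyapunov},
$$\E\bigl[S_{t+1}\,\big|\,\F_t\bigr]\le\gamma(y_t)S_t+2K(y_t)<(1+r)\gamma(y_t)S_t\qquad\text{on }\{S_t>R(y_t)\}.$$
Iterating this one-step bound along the block (successive conditioning, carrying the excursion indicator), then using Lemma \ref{lem:ita} to bound $\E[S_l]=[Q(y_{l-1})\cdots Q(y_0)V](x_1)+[Q(y_{l-1})\cdots Q(y_0)V](x_2)$, and finally applying Markov's inequality to $S$ at the end of the block (using $K\ge1$ to convert $1/R(y)$ into a multiple of $\gamma(y)$), yields a bound of exactly the announced shape $(1+r)^L\bigl(\tfrac{V(x_1)+V(x_2)}{2}\prod\gamma(y_j)+\sum K(y_l)\prod\gamma(y_{l+j})\bigr)$; summing over the block positions closes the estimate.

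The step I expect to be the main obstacle is this last one: making the pigeonhole combinatorics precise (pinning down the forced excursion, tracking which copy leaves the small set and where it sits, and summing so as to land on the exact index ranges in the statement), together with the careful conditional-expectation iteration for the improved ``$(1+r)\gamma$'' drift — the upgrade from $\gamma(y_t)S_t+2K(y_t)$ to $(1+r)\gamma(y_t)S_t$ holds only on $\{S_t>R(y_t)\}$, so the excursion indicator must be propagated through every conditioning step rather than fed directly into Lemma \ref{lem:ita}. The coupling construction, the monotonicity of $\{\tau>n\}$, and the $\bar{\beta}^m$ bound are routine by comparison.
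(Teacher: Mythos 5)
Your plan reproduces the paper's argument essentially verbatim: the same coupling with shared noise, the same successive visiting times of the joint level set $\{\max(V(Z^1_t),V(Z^2_t))\le R(y_t)\}$ yielding the $\bar{\beta}^m$ term, the same improved drift $(1+r)\gamma(y_t)S_t$ on excursions coming from the calibration $R(y)=2K(y)/(r\gamma(y))$, and the same pigeonhole/union bound over the index $k$ of the first long gap and the value $l$ of $\sigma_k$ that produces the double sum. The steps you flag as delicate are exactly the ones the paper carries out (propagating the excursion indicators through the conditional drift iteration and invoking Lemma \ref{lem:ita} plus $K\ge 1$ at the end), so the proposal is correct and follows the same route.
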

\begin{proof}
	For the sake of brevity, we employ a more concise notation: $Z_n^i=Z_{0,n}^{x_i,\mathbf{y}}$, $i=1,2$, and $n\in\N$, moreover we introduce
	$$
	\overline{Z}_{n}:=\left(Z_n^1,Z_n^2\right), \,\,
	\lVert \overline{Z}_{n}\rVert:=\max \left( V(Z_n^1), V(Z_n^2) \right)
	$$ 
	and the sequence of successive visiting times
	$$
	\sigma_0:=0,\,\sigma_{k+1} = \inf\left\{n>\sigma_k\middle|  \lVert \overline{Z}_n\rVert\le 
	R(y_n)
	\right\},\,k\in\N
	$$
	that are obviously $\F_{1,\infty}^\eps$-stopping times. Note that on 
	$\{ \lVert \overline{Z}_n\rVert>R (y_n)\}$ we have 
	\begin{equation}\label{eq:contr}
	\ga (y_n) (V( Z_n^1)+V( Z_n^2))+2K(y_n)
	\le
	(1+r)\ga (y_n) (V( Z_n^1)+V( Z_n^2))
	\end{equation}
	and thus for $k\ge 1$ and $s\ge 1$, by the Markov inequality, we obtain
	\begin{align*}
	\P (\sigma_{k+1}-\sigma_{k}>s\mid \F_{1,\sigma_k}^\eps)
	&\le 
	\E \left(
	\P (C_{\sigma_k+s}\mid \overline{Z}_{\sigma_{k}+s-1})
	\prod_{j=1}^{s-1}
	\ind_{C_{\sigma_k+j}}
	\middle| \F_{1,\sigma_k}^\eps\right)\\
	&\le 
	\frac{(1+r)\ga (y_{\sigma_k+s-1})}{R(y_{\sigma_k+s})}
	\E \left(
	\left(V( Z_{\sigma_k+s-1}^1)+V( Z_{\sigma_k+s-1}^2)\right)
	\prod_{j=1}^{s-2}
	\ind_{C_{\sigma_k+j}}
	\middle| \F_{1,\sigma_k}^\eps\right),
	\end{align*}
	where $C_n$ is the shorthand notation for the event $\{\lVert \overline{Z}_n\rVert> 
	R(y_n)\}$, $n\in\N$. 
	
	By the tower rule, we can write
	\begin{multline*}
		\E \left(
		\left(V( Z_{\sigma_k+s-1}^1)+V( Z_{\sigma_k+s-1}^2)\right)
		\prod_{j=1}^{s-2}
		\ind_{C_{\sigma_k+j}}
		\middle| \F_{1,\sigma_k}^\eps\right)
		=\\
		\E \left(
		\E\left[V( Z_{\sigma_k+s-1}^1)+V( Z_{\sigma_k+s-1}^2)\middle|\F_{1,\sigma_{k}+s-2}^\eps\right]
		\prod_{j=1}^{s-2}
		\ind_{C_{\sigma_k+j}}
		\middle| \F_{1,\sigma_k}^\eps\right).
	\end{multline*}
	
	Using the Markov property of $(\bar{Z}_n)_{n\in\N}$ and the drift property of the parametric kernel \eqref{eq:Lyapunov}, we have
	\begin{align*}
		\E\left[V( Z_{\sigma_k+s-1}^1)+V( Z_{\sigma_k+s-1}^2)\middle|\F_{1,\sigma_{k}+s-2}^\eps\right]
		&=
		\E\left[V( Z_{\sigma_k+s-1}^1)+V( Z_{\sigma_k+s-1}^2)\middle|\bar{Z}_{\sigma_{k}+s-2}\right]
		\\
		&=
		[Q(y_{\sigma_k+s-2})V](Z_{\sigma_k+s-2}^1)+
		[Q(y_{\sigma_k+s-2})V](Z_{\sigma_k+s-2}^2)
		\\
		&\le
		\ga (y_{\sigma_k+s-2})\left(V(Z_{\sigma_k+s-2}^1) + (Z_{\sigma_k+s-2}^2)\right)+2K(y_{\sigma_k+s-2}).
	\end{align*}
	Now, inequality \eqref{eq:contr} yields
	$$
	\E\left[V( Z_{\sigma_k+s-1}^1)+V( Z_{\sigma_k+s-1}^2)\middle|\F_{1,\sigma_{k}+s-2}^\eps\right]\ind_{C_{\sigma_k+s-2}}\le (1+r)\ga (y_{\sigma_k+s-2})\left(V(Z_{\sigma_k+s-2}^1) + (Z_{\sigma_k+s-2}^2)\right).
	$$
	Finally, we arrive at
	\begin{multline*}
			\E \left(
		\left(V( Z_{\sigma_k+s-1}^1)+V( Z_{\sigma_k+s-1}^2)\right)
		\prod_{j=1}^{s-2}
		\ind_{C_{\sigma_k+j}}
		\middle| \F_{1,\sigma_k}^\eps\right)
		\le 
		\\
		(1+r)\ga (y_{\sigma_k+s-2})
		\E \left(
		\left(V( Z_{\sigma_k+s-2}^1)+V( Z_{\sigma_k+s-2}^2)\right)
		\prod_{j=1}^{s-3}
		\ind_{C_{\sigma_k+j}}
		\middle| \F_{1,\sigma_k}^\eps\right).
	\end{multline*}
	
	Iteration of this argument in $s-2$ steps leads to the following estimation:
	\begin{align*}
	\P (\sigma_{k+1}-\sigma_{k}>s\mid \F_{1,\sigma_k}^\eps) 
	&\le  
	\frac{(1+r)^{s-1}\prod_{j=1}^{s-1}\ga (y_{\sigma_k+j})}{R(y_{\sigma_k+s})}
	\E \left(
	V( Z_{\sigma_k+1}^1)+V( Z_{\sigma_k+1}^2)
	\middle| \overline{Z}_{\sigma_k}\right) \\
	&\le
	\frac{r (1+r)^{s-1}\prod_{j=1}^{s}\ga (y_{\sigma_k+j})}{2K(y_{\sigma_k+s})}
	\left[\ga (y_{\sigma_k})\left(V( Z_{\sigma_k}^1)+V( Z_{\sigma_k}^2)\right) +2K(y_{\sigma_k})\right] \\
	&\le (1+r)^s K(y_{\sigma_k})\prod_{j=1}^{s}\ga (y_{\sigma_k+j}),
	\end{align*}
	where we used that $V( Z_{\sigma_k}^1)+V( Z_{\sigma_k}^2)\le 2R(y_{\sigma_k})$, and $K(\cdot)\ge 1$.
	
	Along similar lines, we can show that 
	\begin{align*}
		\P (\sigma_1>s) &\le \frac{(1+r)^{s-1}\prod_{j=1}^{s-1}\ga (y_j)}{R(y_s)}
		\left[\ga (y_0)\left(V(x_1)+V(x_2)\right)+2K(y_0)\right]
		\\
		&\le
		(1+r)^s \prod_{j=1}^{s}\ga (y_j)\left[\frac{\ga (y_0)}{2}\left(V(x_1)+V(x_2)\right)+K(y_0)\right].
	\end{align*}
	
	Clearly, for any $0<m\le n$, on the event $\{\sigma_m>n\}$ we have $\{\sigma_{k+1}-\sigma_{k}>\lfrf{n/m}\}\cap\{\sigma_{k}\le k\lfrf{n/m}\}$ for some $k=0,1,\ldots m-1$ hence by the union bound and the estimates we obtain for the time elapsed between consecutive visits, we can write
	\begin{align*}
		\P (\sigma_m>n)&\le \P \left(\bigcup_{k=0}^{m-1} \{\sigma_{k+1}-\sigma_{k}>\lfrf{n/m}\}\cap\{\sigma_{k}\le k\lfrf{n/m}\}\right)
		\\
		&\le 
		\sum_{k=0}^{m-1}\P \left(\{\sigma_{k+1}-\sigma_{k}>\lfrf{n/m}\}\cap\{\sigma_{k}\le k\lfrf{n/m}\}\right)
		\\
		&=
		\sum_{k=0}^{m-1}\sum_{l=k}^{k\lfrf{n/m}}
		\P \left(\sigma_{k+1}-\sigma_{k}>\lfrf{n/m}\mid \sigma_{k}=l\right)\P(\sigma_{k}=l)
		\\
		&\le 
		\P(\sigma_{1}>\lfrf{n/m})+
			\sum_{k=1}^{m-1}\sum_{l=k}^{k\lfrf{n/m}}
		\P \left(\sigma_{k+1}-\sigma_{k}>\lfrf{n/m}\mid \sigma_{k}=l\right)
	\\
	&\le 
	\left[\frac{\ga (y_0)}{2}\left(V(x_1)+V(x_2)\right)+K(y_0)\right]
	(1+r)^{\lfrf{n/m}} \prod_{j=1}^{\lfrf{n/m}}\ga (y_j)
	\\
	&+
	(1+r)^{\lfrf{n/m}}\sum_{k=1}^{m-1}\sum_{l=k}^{k\lfrf{n/m}} K(y_l)\prod_{j=1}^{\lfrf{n/m}}\ga (y_{l+j}).
	\end{align*}
	
	Next, we estimate the probability of no-coupling on events when small sets are visited at least $m$-times. By Lemma \ref{lem:T} and the choice of $R(y)$ in the definition of $f$ in \eqref{eq:fgood} and \eqref{eq:Zaux}, for each $j=1,\ldots,m$, $x \mapsto f(x,y_j,\eps_{\sigma_j+1})$ is constant on the level set $\stackrel{-1}{V}([0,R(y_{\sigma_j})])$ with probability at least $1-\bar{\beta}$ 
	independently of $\F_{0,\sigma_j}^\eps$ thus no-coupling happens with probability at most $\bar{\beta}$.
	Therefore, we can estimate:
	\begin{align*}
	\P (Z_{0,n}^{x_1,\mathbf{y}}\ne Z_{0,n}^{x_2,\mathbf{y}},\sigma_{m}<n) 
	\le \bar{\beta}^m. 
	\end{align*}
	
	Finally, we combine this upper bound with that one what we got for the tail probability of the visiting times, and obtain
	\begin{align*}
	&\P (Z_{0,n}^{x_1,\mathbf{y}}\ne Z_{0,n}^{x_2,\mathbf{y}})
	\le 
	\P (Z_{0,n}^{x_1,\mathbf{y}}\ne Z_{0,n}^{x_2,\mathbf{y}},\sigma_{m}<n)
	+
	\P (\sigma_{m}> n-1)\\
	& \le
	\bar{\beta}^m
	+
	\left[\frac{\ga (y_0)}{2}\left(V(x_1)+V(x_2)\right)+K(y_0)\right]
	(1+r)^{\lfrf{\frac{n-1}{m}}} \prod_{j=1}^{\lfrf{\frac{n-1}{m}}}\ga (y_j)
	\\
	&+
	(1+r)^{\lfrf{\frac{n-1}{m}}}\sum_{k=1}^{m-1}\sum_{l=k}^{k\lfrf{\frac{n-1}{m}}} K(y_l)\prod_{j=1}^{\lfrf{\frac{n-1}{m}}}\ga (y_{l+j})
	\\
	&=
	\bar{\beta}^m
	+
	(1+r)^{\lfrf{\frac{n-1}{m}}}\left\{
	\frac{V(x_1)+V(x_2)}{2}\prod_{j=0}^{\lfrf{\frac{n-1}{m}}}\ga (y_j)
	+
	\sum_{k=0}^{m-1}\sum_{l=k}^{k\lfrf{\frac{n-1}{m}}} K(y_l)\prod_{j=1}^{\lfrf{\frac{n-1}{m}}}\ga (y_{l+j})
	\right\}
	\end{align*}
	which completes the proof.
	
\end{proof}

\begin{proof}[Proof of Lemma \ref{lem:MCREmixing}]
The process $Z_{0,n}^{X_0,\mathbf{Y}}$, $n\in\N$ is a version of $(X_n)_{n\in\N}$ thus 
to simplify the notation in this proof, we may and will redefine $(X_n)_{n\in\N}$ as $X_n:=Z_{0,n}^{X_0,\mathbf{Y}}$.
Furthermore, we indicate the dependence of $Z$ on the driving noise by writing $Z_{s,t}^{x,\mathbf{y},\eps}$ instead of $Z_{s,t}^{x,\mathbf{y}}$, where $\eps$ refers to $\mathbf{\eps}=(\eps_1,\eps_2,\ldots)$. We also introduce the usual left shift operation: $(S\mathbf{y})_j=y_{j+1}$ and similarly $(S\mathbf{\eps})_j=\eps_{j+1}$, $j\in\N$.

\medskip
For arbitrary but fixed $j\in\N$ and $x\in\X$, we can write
\begin{align*}
	\P (Z_{j,j+n}^{X_j,\mathbf{Y},\eps}\ne Z_{j,j+n}^{x,\mathbf{Y},\eps}) 
	=
	\P (Z_{0,n}^{X_j,S^j\mathbf{Y},S^j\eps}\ne Z_{0,n}^{x,S^j\mathbf{Y},S^j\eps})
\end{align*} 
thus by the tower rule and Lemma \ref{lem:coupling}, for $0<m<n$, we have
\begin{align}\label{eq:est1}
\begin{split}
\P (Z_{0,n}^{X_j,S^j\mathbf{Y},S^j\eps}\ne Z_{0,n}^{x,S^j\mathbf{Y},S^j\eps}) =
\E \left[\P \left(Z_{0,n}^{X_j,S^j\mathbf{Y},S^j\eps}\ne Z_{0,n}^{x,S^j\mathbf{Y},S^j\eps} \middle| 
\F_{0,\infty}^Y\vee\sigma (X_j)
\right)\right] \le \bar{\beta}^m
+& \\
(1+r)^{\lfrf{\frac{n-1}{m}}}\E\left[
\frac{\E \left[V(X_j)\mid \F_{0,\infty}^Y\right]+V(x)}{2}\prod_{r=0}^{\lfrf{\frac{n-1}{m}}}\ga (Y_{r+j})
+
\sum_{k=0}^{m-1}\sum_{l=k}^{k\lfrf{\frac{n-1}{m}}} K(Y_{l+j})\prod_{r=1}^{\lfrf{\frac{n-1}{m}}}\ga (Y_{l+r+j})
\right].
\end{split}
\end{align}
As for $\E \left[V(X_j)\mid \F_{0,\infty}^Y\right]$, by Lemma \ref{lem:ita}, we obtain
\begin{align}\label{eq:est2}
\begin{split}
\E \left[V(X_j)\mid \F_{0,\infty}^Y\right] &= \E \left[[Q(Y_{j-1}),\ldots, Q(Y_0)V](X_0)\mid \F_{0,\infty}^Y\right]
\\
&\le
	\E(V(X_0))\,\prod_{i=0}^{j-1}
\ga (Y_i)
+ \sum_{r=0}^{j-1}K(Y_r)\prod_{i=r+1}^{j-1}
\ga (Y_i),
\end{split}
\end{align}
where we used that the initial state $X_0$ is independent of $\sigma (Y_n,\eps_{n+1}\mid n\in\N)$. 

For $n>1$, we choose $m_n=\frac{n-1}{\lfrf{n^{1/2}}-1}$. It is easy to check that $m_n\ge \lfrf{n^{1/2}}$ for $n>1$ hence $\bar{\beta}^{m_n}\le \bar{\beta}^{\lfrf{n^{1/2}}}$. 
Furthermore, let us fix $\ga'>\bar{\ga}$ such that $\tilde{\ga}:=(1+r)\ga'<1$. Then 
by Assumption \ref{as:dynamics} A), there exists $N\in\N$ such that
$$
\sup_{j\ge -1} \E\left[
K(Y_j)
\prod_{k=1}^{n}
\ga (Y_{k+j})
\right] \le (\ga')^{n},\,\,n\ge N,
$$
where for our convenience, we employ the convention $K(Y_{-1}):=1$.
Using \eqref{eq:est2}, for $\lfrf{n^{1/2}}\ge N$, we obtain
\begin{align*}
\E\left[
\E \left[V(X_j)\mid \F_{0,\infty}^Y\right]\prod_{r=0}^{\lfrf{n^{1/2}}-1}\ga (Y_{r+j})
\right]	
&\le 
\E (V(X_0))\E\left[\prod_{i=0}^{\lfrf{n^{1/2}}+j-1}\ga (Y_{i})\right]
+
\sum_{r=0}^{j-1}\E
\left[
K(Y_r)\prod_{i=r+1}^{\lfrf{n^{1/2}}+j-1}\ga (Y_{i})
\right]
\\
&\le
\E (V(X_0))(\ga')^{\lfrf{n^{1/2}}+j} +
\sum_{r=0}^{j-1} (\ga')^{\lfrf{n^{1/2}}+j-1-r}
\\
&\le 
(\ga')^{\lfrf{n^{1/2}}}
\left(\E (V(X_0)) + \frac{1}{1-\ga'}\right).
\end{align*}
Similarly, for $\lfrf{n^{1/2}}-1\ge N$, we get
\begin{align*}
	\E \left[V(x)\prod_{r=0}^{\lfrf{n^{1/2}}-1} \ga (Y_{r+j})\right]
	\le (\ga')^{\lfrf{n^{1/2}}} V(x),
\end{align*}
moreover
\begin{align*}
	\sum_{k=0}^{m_n-1}\sum_{l=k}^{k(\lfrf{n^{1/2}}-1)}
	\E\left[
	 K(Y_{l+j})\prod_{r=1}^{\lfrf{n^{1/2}}-1}\ga (Y_{l+r+j})
	\right] 
	&\le 
	\sum_{k=0}^{m_n-1}\sum_{l=k}^{k(\lfrf{n^{1/2}}-1)} (\ga')^{\lfrf{n^{1/2}}-1}
	\\
	&=
	m_n\left[\frac{(m_n-1)(\lfrf{n^{1/2}}-2)}{2}+1\right]
	(\ga')^{\lfrf{n^{1/2}}-1}.
\end{align*}

To sum up, and taking into account that $m_n=O(n^{1/2})$, we obtain 
\begin{align*}
\P (Z_{j,j+n}^{X_j,\mathbf{Y},\eps}\ne Z_{j,j+n}^{x,\mathbf{Y},\eps})
&\le
\bar{\beta}^{n^{1/2}-1} + \frac{(1+r)^{\lfrf{n^{1/2}}-1}}{2}
(\ga')^{\lfrf{n^{1/2}}}
\left(V(x) + \E (V(X_0)) + \frac{1}{1-\ga'}\right)
\\
&+
c n^{3/2} (1+r)^{\lfrf{n^{1/2}}-1} (\ga')^{\lfrf{n^{1/2}}-1}
\\
&\le 
\bar{\beta}^{n^{1/2}-1} +
\left[\frac{(V(x) + \E (V(X_0)) + \frac{1}{1-\tilde{\ga}}}{2}+c n^{3/2}\right]
\tilde{\ga}^{\lfrf{n^{1/2}}-1}
\end{align*}
for some $c>0$,
whenever $n\ge (N+2)^2$ which implies the desired estimate.
\end{proof}

The proof of Theorem \ref{thm:stac_forward_coupling} follows a similar approach to that of Lemma \ref{lem:MCREmixing}, with a few key differences. The most significant distinction to keep in mind is that the distribution of $X_0^\ast$ heavily depends on the whole trajectory $(Y_n)_{n\in\Z}$.
\begin{lemma}\label{lem:VYbound}
Under the conditions of Theorem \ref{thm:stac_forward_coupling}, we have
$$
\limsup_{n\to\infty}\E^{1/n} \left[V(X_0^\ast)\prod_{k=0}^{n-1}\ga (Y_k)\right]\le \bar{\ga}.
$$
\end{lemma}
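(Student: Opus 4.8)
The plan is to pass to the environment $\sigma$-algebra $\G:=\F_{-\infty,\infty}^{Y}$, bound the \emph{quenched} moment $\E[V(X_0^\ast)\mid\G]$ by an explicit functional of $(Y_k)_{k\le -1}$, and then conclude via Assumption \ref{as:dynamics} A) together with stationarity. This is the stationary-environment analogue of the estimate \eqref{eq:est2} used in the proof of Lemma \ref{lem:MCREmixing}. Throughout, $\eps$ denotes the (two-sided) i.i.d.\ innovation sequence driving the stationary solution; it is independent of $\G$, and by the representation \eqref{eq:fgood}--\eqref{eq:Zaux} one has $Q(y,x,A)=\int_{[0,1]}\ind_{f(x,y,u)\in A}\,\dint u$.

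\emph{Step 1: a quenched drift bound for $X_0^\ast$.} Fix $x\in\X$ and let $Z_{-l,0}^{x,\mathbf{Y},\eps}$ be the iteration \eqref{eq:iter} started from $x$ at time $-l$, run up to time $0$ with the innovations $\eps_{-l+1},\dots,\eps_0$. Since these are independent of $\G$, the conditional law of $Z_{-l,0}^{x,\mathbf{Y},\eps}$ given $\G$ is $\delta_x$ transported through $Q(Y_{-l}),\dots,Q(Y_{-1})$, so by Lemma \ref{lem:ita} (which extends to negative indices without change),
\[
\E[\, V(Z_{-l,0}^{x,\mathbf{Y},\eps}) \mid \G\,]\;\le\; V(x)\,\Pi_l + h_l\qquad \Pas,
\]
where $\Pi_l:=\prod_{r=-l}^{-1}\ga(Y_r)$ and $h_l:=\sum_{r=-l}^{-1}K(Y_r)\prod_{j=r+1}^{-1}\ga(Y_j)$. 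By Truquet's construction of the stationary solution \cite{Truquet1}, $Z_{-l,0}^{x,\mathbf{Y},\eps}\to X_0^\ast$ a.s.\ as $l\to\infty$ (indeed, by the coupling property of $f$ in \eqref{eq:fgood} these iterates coincide with $X_0^\ast$ for all $l$ beyond an a.s.\ finite random index), so $V(Z_{-l,0}^{x,\mathbf{Y},\eps})\to V(X_0^\ast)$ a.s. Moreover $h_l\uparrow h_\infty:=\sum_{s\ge1}K(Y_{-s})\prod_{i=1}^{s-1}\ga(Y_{-i})$, and $\E(h_\infty)<\infty$ by stationarity and Assumption \ref{as:dynamics} A) (its tail terms are summable), so $h_\infty<\infty$ a.s.; and $\Pi_l\to 0$ a.s., because $\E(\Pi_l)\le\rho^{\,l}$ for some $\rho<1$ and all large $l$ (Assumption \ref{as:dynamics} A) with $j=-1$, $K(Y_{-1}):=1$, plus stationarity), so Markov's inequality and Borel--Cantelli apply. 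Letting $l\to\infty$ in the display and invoking the conditional Fatou lemma gives
\[
g:=\E[\,V(X_0^\ast)\mid\G\,]\;\le\; h_\infty\;=\;\sum_{s\ge1}K(Y_{-s})\prod_{i=1}^{s-1}\ga(Y_{-i})\qquad\Pas,
\]
in particular $\E(V(X_0^\ast))=\E(g)<\infty$.

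\emph{Step 2: from quenched to annealed, and the rate.} Since $\prod_{k=0}^{n-1}\ga(Y_k)$ is $\G$-measurable and nonnegative, the tower property, Step 1 and Tonelli's theorem give, for every $n\ge1$,
\[
\E\Big[V(X_0^\ast)\prod_{k=0}^{n-1}\ga(Y_k)\Big]=\E\Big[g\prod_{k=0}^{n-1}\ga(Y_k)\Big]\le\sum_{s\ge1}\E\Big[K(Y_{-s})\prod_{m=-s+1}^{n-1}\ga(Y_m)\Big]=\sum_{s\ge1}\E\Big[K(Y_0)\prod_{m=1}^{n+s-1}\ga(Y_m)\Big],
\]
the last step by shift-invariance. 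Now fix $\bar{\ga}'\in(\bar{\ga},1)$; by Assumption \ref{as:dynamics} A) there is $N$ with $\E[K(Y_0)\prod_{m=1}^{p}\ga(Y_m)]\le(\bar{\ga}')^{p}$ for all $p\ge N$, and since $n+s-1\ge n$ for $s\ge1$, for every $n\ge N$ the right-hand side is at most $\sum_{s\ge1}(\bar{\ga}')^{\,n+s-1}=(\bar{\ga}')^{\,n}/(1-\bar{\ga}')$. Hence $\E^{1/n}[V(X_0^\ast)\prod_{k=0}^{n-1}\ga(Y_k)]\le(1-\bar{\ga}')^{-1/n}\,\bar{\ga}'\to\bar{\ga}'$, so the $\limsup$ is at most $\bar{\ga}'$; letting $\bar{\ga}'\downarrow\bar{\ga}$ proves the lemma.

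\emph{The main obstacle.} The delicate part is Step 1: the conditional law of $X_0^\ast$ depends on the entire environment and carries no obvious a priori moment bound, so one must realize $X_0^\ast$ as an a.s.\ limit of backward iterations from a fixed point driven by the \emph{same} noise, transport the drift inequality of Lemma \ref{lem:ita}, and pass to the limit. The two ingredients that make this go through — that the ``initial'' term $V(x)\Pi_l$ vanishes a.s.\ and that $h_l$ increases to an integrable limit — are exactly the points at which the uniform long-term contractivity of Assumption \ref{as:dynamics} A) is used.
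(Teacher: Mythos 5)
Your proof is correct and follows essentially the same route as the paper's: realize $X_0^\ast$ as a limit of backward iterations $Z_{-l,0}^{x,\mathbf{Y}}$ from a fixed point, push the drift bound of Lemma \ref{lem:ita} through the conditional expectation given $\F_{-\infty,\infty}^{Y}$, and conclude from Assumption \ref{as:dynamics} A) together with shift-invariance. The only difference is how the limit $l\to\infty$ is passed — you use a.s.\ backward coupling (justified by Lemma \ref{lem:coupling} and Borel--Cantelli) plus conditional Fatou to get the $n$-free quenched bound $\E[V(X_0^\ast)\mid\G]\le h_\infty$, whereas the paper uses only Truquet's total-variation convergence of the laws together with a truncation at level $M$ and keeps the forward product $\prod_{k=0}^{n-1}\ga(Y_k)$ inside the conditional estimate throughout — but this is a technical variation, not a different argument.
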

\begin{proof}
	Without the loss of generality, for our convenience, we can assume that the i.i.d. driving noise is a double-sided infinite process $(\eps_n)_{n\in\Z}$, as well. Let $x\in\X$ be arbitrary and deterministic. Then by Corollary 1 and the subsequent Note in  \cite{Truquet1},
	$\law{Z_{0,m}^{x,\mathbf{Y}}}=\law{Z_{-m,0}^{x,\mathbf{Y}}}\to\law{X_0^\ast}$, as $m\to\infty$ in total variation hence
	\begin{equation}\label{eq:origest}
		\E \left[V(X_0^\ast)\prod_{k=0}^{n-1}\ga (Y_k)\right] =
		\lim_{M\to\infty}
		\lim_{m\to\infty}
		\E \left[
		\min\left(
		M,V(Z_{-m,0}^{x,\mathbf{Y}})\prod_{k=0}^{n-1}\ga (Y_k)
		\right)
		\right].
	\end{equation}
	
	By Lemma \ref{lem:ita}, we can write
	\begin{align*}
		\E \left[
		V(Z_{-m,0}^{x,\mathbf{Y}})\prod_{k=0}^{n-1}\ga (Y_k)
		\middle| \F_{-\infty,\infty}^Y
		\right]
		&=[Q(Y_{-1})\ldots Q(Y_{-m})V](x)\prod_{k=0}^{n-1}\ga (Y_k)
		\\
		&\le
		V(x)\,\prod_{r=-m}^{n-1}
		\ga (Y_r)
		+ \sum_{r=-m}^{-1}K(Y_r)\prod_{j=r+1}^{n-1}
		\ga (Y_j)
	\end{align*}
	thus by the tower rule, we have
	\begin{align}\label{eq:prevest}
		\E \left[
		\min\left(
		M,V(Z_{-m,0}^{x,\mathbf{Y}})\prod_{k=0}^{n-1}\ga (Y_k)
		\right)
		\right] \le 
			V(x)\,\E\left[\prod_{r=-m}^{n-1}
		\ga (Y_r)
		\right]
		+ \sum_{r=-m}^{-1}
		\E
		\left[
		K(Y_r)\prod_{j=r+1}^{n-1}
		\ga (Y_j)
		\right].
	\end{align}.
	
	By Assumtion \ref{as:dynamics}, for any fixed $\bar{\ga}<\ga'<1$, exists $N\in\N$ such that 
	$$
	\E\left(K(Y_0)\prod_{j=1}^n\ga (Y_j)\right)\le (\ga')^n,\,\,n\ge N,
	$$
	and thus we can further estimate the right-hand side in \eqref{eq:prevest} as follows:
	\begin{align*}
			\E \left[
		\min\left(
		M,V(Z_{-m,0}^{x,\mathbf{Y}})\prod_{k=0}^{n-1}\ga (Y_k)
		\right)
		\right] 
		&\le V(x)(\ga')^{n+m}+\sum_{r=-m}^{-1} (\ga')^{n-1-r}
		\\
		&\le V(x)(\ga')^{n+m} +\frac{(\ga')^{n}}{1-\ga'}.
	\end{align*}
	Substituting this estimate back into \eqref{eq:origest} yields
	$$
	\limsup_{n\to\infty}\E^{1/n} \left[V(X_0^\ast)\prod_{k=0}^{n-1}\ga (Y_k)\right] \le \ga'.
	$$
	Now, taking the limit $\ga'\downarrow\bar{\ga}$ gives the desired inequality.
\end{proof}

\begin{proof}[Proof of Theorem \ref{thm:stac_forward_coupling}]
We consider the processes $Z_{0,n}^{X_0,\mathbf{Y}}$ and $Z_{0,n}^{X_0^\ast,\mathbf{Y}}$, $n\in\N$, where the latter is a version of $(X_n^\ast)_{n\in\N}$.
For the coupling time, along similar lines as in the proof of Lemma \ref{lem:MCREmixing}, 
by using Lemma \ref{lem:coupling}, we obtain
\begin{align*}
	\P (\tau> n)&\le \P \left(Z_{0,n}^{X_0,\mathbf{Y}}\ne Z_{0,n}^{X_0^\ast,\mathbf{Y}}\right) 
	\\
	&\le
	 \bar{\beta}^m
	+ 
	\frac{(1+r)^{\lfrf{\frac{n-1}{m}}}}{2}
	\E\left[
	\left(
	\E \left[V(X_0^\ast)\mid \F_{-\infty,\infty}^Y\right]+\E (V(X_0))\right)
	\prod_{r=0}^{\lfrf{\frac{n-1}{m}}}\ga (Y_{r})
	\right]
	\\
	&+
	(1+r)^{\lfrf{\frac{n-1}{m}}}
	\sum_{k=0}^{m-1}\sum_{l=k}^{k\lfrf{\frac{n-1}{m}}} \E\left[K(Y_{l})\prod_{r=1}^{\lfrf{\frac{n-1}{m}}}\ga (Y_{l+r})
	\right].
\end{align*}
where $0<m<n$. Again, as in the proof of Lemma \ref{lem:MCREmixing}, let us fix $m_n=\frac{n-1}{\lfrf{n^{1/2}}-1}$, $n>1$, and $\ga'>\bar{\ga}$ such that $\tilde{\ga}:=(1+r)\ga'<1$. 
By Assumption \ref{as:dynamics} and Lemma \ref{lem:VYbound}, we can choose $N\in\N$ be so large that such that
\begin{align*}
	\E\left[K(Y_0)\prod_{k=1}^n \ga (Y_k)\right]\le (\ga')^n
	\,\,\text{and}\,\,
	\E\left[V(X_0^\ast)\prod_{k=0}^{n-1} \ga (Y_k)\right]\le (\ga')^n,\,\,n\ge N.
\end{align*}
For $\lfrf{n^{1/2}}\ge N$, we have
\begin{align*}
	\P (\tau\ge n)
	&\le 
	\bar{\beta}^{\lfrf{n^{1/2}}}+\frac{(1+r)^{\lfrf{n^{1/2}}-1}}{2}
	\left(
	1+\E(V(X_0))
	\right)(\ga')^{\lfrf{n^{1/2}}}\\
	&+
	(1+r)^{\lfrf{n^{1/2}}-1}
	\sum_{k=0}^{m_n-1}\sum_{l=k}^{k(\lfrf{n^{1/2}}-1)}
	(\ga')^{\lfrf{n^{1/2}}-1}
	\\
	&\le
	\bar{\beta}^{\lfrf{n^{1/2}}}
	+
	\frac{1+\E (V(X_0))}{2}\tilde{\ga}^{\lfrf{n^{1/2}}}
	+
	\tilde{\ga}^{\lfrf{n^{1/2}}-1}\sum_{k=0}^{m_n-1}(k(\lfrf{n^{1/2}}-2)+1),
\end{align*}
where $\sum_{k=0}^{m_n-1}(k(\lfrf{n^{1/2}}-2)+1)=O(n^{3/2})$ which completes the proof.   	
\end{proof}

\end{document}